\theoremstyle{plain}
\newtheorem{lemma}{Lemma}
\newtheorem{prop}[lemma]{Proposition}
\newtheorem{coro}[lemma]{Corollary}
\newtheorem{theorem}[lemma]{Theorem}
\newtheorem{conj}[lemma]{Conjecture}
\newtheorem*{prop*}{Proposition}
\theoremstyle{definition}
\newtheorem{defn}[lemma]{Definition}
\newtheorem{remark}[lemma]{Remark}
\newtheorem{ex}[lemma]{Example}
\theoremstyle{remark}
\numberwithin{equation}{section}
\newenvironment{enumeratei}{\begin{enumerate}[\upshape (i)]}{\end{enumerate}}
\newcommand{\p}{\partial}
\newcommand{\Hol}{\textup{Hol}}
\newcommand{\Rea}{\textup{Re}}
\begin{document}
\title[Invariant metrics]{Invariant metrics on negatively pinched complete K\"ahler manifolds}

\author{Damin Wu}
\address{Department of Mathematics\\
University of Connecticut\\
341 Mansfield Road U1009
Storrs, CT 06269-1009,
USA}

\email{damin.wu@uconn.edu}

\author{Shing--Tung Yau}
\address{Department of Mathematics \\
				Harvard University \\
				One Oxford Street, Cambridge MA 02138}
\email{yau@math.harvard.edu}
\thanks{The first author was partially supported by the NSF grant DMS-1611745. The second author was partially supported by the NSF grant DMS-1308244 and DMS-1607871. The first author would like to thank Professor H. Wu for bringing his attention to the Bergman metric in 2006.}
\maketitle

\begin{abstract} 
We prove that a complete K\"ahler manifold with holomorphic curvature bounded between two negative constants admits a unique complete K\"ahler-Einstein metric. We also show this metric and the Kobayashi-Royden metric are both uniformly equivalent to the background K\"ahler metric. Furthermore, all three metrics are shown to be uniformly equivalent to the Bergman metric, if the complete K\"ahler manifold is simply-connected, with the sectional curvature bounded between two negative constants. In particular, we confirm two conjectures of R. E. Greene and H. Wu posted in 1979.
\end{abstract}

\maketitle

\tableofcontents

\section{Introduction}
The classical Liouville's theorem tells us that the complex plane $\mathbb{C}$ has no bounded nonconstant holomorphic functions, while, by contrast, the unit disk $\mathbb{D}$ has plenty of bounded nonconstant holomorphic functions. From a geometric viewpoint, the complex plane does not admit any metric of negative curvature, while the unit disk admits a metric, the Poincar\'e metric, of negative curvature.

In a higher dimensional analogue, the unit disk is replaced by the simply-connected complete K\"ahler manifold. 
It is believed that a simply-connected complete K\"ahler manifold $M$ with sectional curvature bounded above by a negative constant has many nonconstant bounded holomorphic functions (cf. \cite[p. 678, Problem 38]{Yau:1982}). In fact, it is conjectured that such a manifold is biholomorphic to a bounded domain in~$\mathbb{C}^n$ (cf. \cite[p. 225]{Siu-Yau:1977}, \cite[p. 98]{Wu:1983}).

The negatively curved complex manifolds are naturally associated with the invariant metrics.  
An \emph{invariant metric} is a metric $L_M$ defined on a complex manifold $M$ such that every biholomorphism $F$ from $M$ to itself gives an isometry $F^*L_M = L_M$. Thus, the invariant metric depends only on the underlying complex structure of $M$.

There are four classical invariant metrics, the Bergman metric, the Carath\'eodory-Reiffen metric,  the Kobayashi-Royden metric, and the K\"ahler-Einstein metric of negative scalar curvature. It is known that on a bounded, smooth, strictly pseudoconvex domain in $\mathbb{C}^n$, all four classical invariant metrics are uniformly equivalent to each other (see, for example, \cite{Diederich:1970, Graham:1975, Cheng-Yau:1980, Lempert:1981, Beals-Fefferman-Grossman:1983, Wu:1993} and references therein). The equivalences do not extended to weakly pseudoconvex domains (see, for example, \cite{Diederich-Fornaess-Herbort:1984} and references therein for the inequivalence of the Bergman metric and the Kobayashi-Royden metric).

On K\"ahler manifolds, R. E. Greene and H. Wu have posted two remarkable conjectures concerning the uniform equivalences of the Kobayashi-Royden metric and the Bergman metric. Their first conjecture states as below.
\begin{conj}[{\cite[p. 112, Remark (2)]{Greene-Wu:1979}}] \label{con:GW-KR}
Let $(M, \omega)$ be a simply-connected complete K\"ahler manifold satisfying $- B \le \textup{sectional curvature} \le -A$ for two positive constants $A$ and $B$. Then, the Kobayashi-Royden metric $\mathfrak{K}$ satisfies
\[
    C^{-1} | \xi |_{\omega} \le \mathfrak{K}(x, \xi) \le C | \xi |_{\omega}, \quad \textup{for all $x \in M$ and $\xi \in T'_x M$}.
\]
Here $C>0$ is a constant depending only on $A$ and $B$.
\end{conj}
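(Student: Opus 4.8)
The plan is to prove the two inequalities of the conjecture separately. Throughout, $C>0$ denotes a constant depending only on $A$, $B$ and $n=\dim_{\mathbb C}M$, which may change from line to line. The lower bound on $\mathfrak K$ will come from the Schwarz lemma of Yau, and the upper bound from producing, through every point and in every direction, a holomorphic disk in $M$ of a definite size; the latter rests on a uniform \emph{quasi-bounded geometry} statement, which I expect to be the main obstacle.

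\emph{Lower bound.} The hypothesis $-B\le\text{sec}\le -A$ forces the holomorphic sectional curvature of $(M,\omega)$ to be $\le -A<0$. Let $f\colon\mathbb D_R\to M$ be holomorphic with $f(0)=x$ and $f'(0)=\xi$, where $\mathbb D_R$ is the Euclidean disk of radius $R$; put $\tilde f(t)=f(Rt)$ and give $\mathbb D$ the Poincar\'e metric $\omega_{\mathbb D}$ of Gauss curvature $-A$. The Schwarz lemma of Yau gives $\tilde f^{\,\ast}\omega\le\omega_{\mathbb D}$, so evaluating at the origin, $R^2|\xi|_\omega^2=|\tilde f'(0)|_\omega^2\le 4/A$. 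Taking the infimum of $1/R$ over all such $f$ yields $\mathfrak K(x,\xi)\ge\tfrac{\sqrt A}{2}\,|\xi|_\omega$. (Completeness, of $M$ or of the Poincar\'e metric on the source, is what runs the maximum principle in Yau's lemma.)

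\emph{Upper bound.} It suffices to establish quasi-bounded geometry: there are $r_0>0$ and $c_1\ge 1$, depending only on $A$, $B$, $n$, such that for every $x\in M$ there is a biholomorphism $\Phi_x$ from $B_{\mathbb C^n}(0,r_0)$ onto a neighbourhood of $x$ with $\Phi_x(0)=x$ and $c_1^{-1}\,\omega_{\mathrm{eucl}}\le\Phi_x^{\,\ast}\omega\le c_1\,\omega_{\mathrm{eucl}}$ on $B(0,r_0)$. Indeed, given $\xi$ with $|\xi|_\omega=1$, set $v=(d\Phi_x)_0^{-1}\xi$, so $c_1^{-1}\le|v|_{\mathrm{eucl}}\le c_1$; then $t\mapsto\Phi_x\!\big(\tfrac{r_0}{2}\,t\,v/|v|_{\mathrm{eucl}}\big)$ is a holomorphic disk $\mathbb D\to M$ through $x$ with derivative $c\xi$ at $0$, where $c=\tfrac{r_0}{2|v|_{\mathrm{eucl}}}\ge\tfrac{r_0}{2c_1}$; this disk witnesses $\mathfrak K(x,c\xi)\le 1$, so by $\mathbb C$-homogeneity $\mathfrak K(x,\xi)=\tfrac{1}{c}\mathfrak K(x,c\xi)\le\tfrac{2c_1}{r_0}$. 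Since $x$ and the unit vector $\xi$ were arbitrary, $\mathfrak K(x,\xi)\le C|\xi|_\omega$.

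\emph{Quasi-bounded geometry.} Since $M$ is simply-connected, complete and Cartan--Hadamard ($\text{sec}\le -A<0$), $\exp_x\colon T_xM\to M$ is a global diffeomorphism, so the injectivity radius is infinite; by Rauch and Hessian comparison using $-B\le\text{sec}\le -A$, in geodesic normal coordinates on $B(x,r_0)$ the metric is uniformly equivalent to the Euclidean one, uniformly in $x$. These coordinates are not holomorphic, but in the K\"ahler setting one can correct them: $d(x,\cdot)^2$ is uniformly strictly plurisubharmonic on $B(x,r_0)$ (its $i\partial\bar\partial$ is controlled by its real Hessian), so H\"ormander's $L^2$ estimate for $\bar\partial$ with this weight, together with interior elliptic estimates on a region of uniformly bounded geometry, allows one to solve $\bar\partial u=\bar\partial(\text{coordinate guess})$ with $C^1$ control and obtain $n$ holomorphic functions whose differentials at $x$ normalise to the identity; after shrinking $r_0$ and enlarging $c_1$ this produces the required $\Phi_x$. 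The delicate points are the uniformity of all the constants and the quantitative control of $\Phi_x$ out to a definite fraction of its chart. One may instead bypass this construction and observe that the conjecture is the special case $-B\le\text{sec}\le -A$ of the uniform equivalence of $\mathfrak K$ and $\omega$ stated in the abstract, established there under the weaker hypothesis of pinched holomorphic sectional curvature.
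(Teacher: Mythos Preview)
Your outline is correct and follows the paper's architecture: Yau's Schwarz lemma for the lower bound (the paper's Lemma~\ref{le:KR-left}), and uniform holomorphic quasi-coordinate charts producing holomorphic disks of definite size for the upper bound (the paper's Lemma~\ref{le:WY-KR}). The one substantive difference is how the quasi-coordinates are obtained. You invoke Cartan--Hadamard from the simply-connectedness to secure infinite injectivity radius and then run the $L^2$/Siu--Yau construction directly on $(M,\omega)$; the paper instead first replaces $\omega$ by a uniformly equivalent K\"ahler metric whose curvature tensor has bounded covariant derivatives of \emph{all} orders, via Shi's short-time K\"ahler--Ricci flow (Lemma~\ref{le:Shi}), and only then applies its quasi-bounded-geometry theorem (Theorem~\ref{th:WYqc}). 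The detour through Shi is precisely what allows the paper to drop simply-connectedness and weaken the hypothesis to a pinch on the \emph{holomorphic} sectional curvature (Theorem~\ref{th:WY-KR}); under the conjecture's hypotheses your more direct route is legitimate and slightly more elementary. The step you correctly flag as delicate---passing from the $L^2$ smallness of the $\bar\partial$-correction to uniform $C^1$ smallness on a ball of radius independent of the point---is exactly the content of the paper's effective Lemma~\ref{le:WYdelta1}, which combines H\"ormander's weighted estimate with interior elliptic estimates for $\Delta_g\beta^j=\Delta_g w^j$; your sketch points at the right mechanism.
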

As pointed out in \cite[p. 112]{Greene-Wu:1979}, it is well-known that the left inequality in the conjecture follows from the Schwarz lemma and the hypothesis of sectional curvature bounded above by a negative constant (see also Lemma~\ref{le:KR-left}).

Our first result confirms this conjecture. In fact, we prove a stronger result, as we relax the sectional curvature to the holomorphic sectional curvature, and remove the assumption of simply-connectedness. 
\begin{theorem}\label{th:WY-KR}
Let $(M, \omega)$ be a complete K\"ahler manifold whose holomorphic sectional curvature $H(\omega)$ satisfies $- B \le H(\omega) \le -A$ for some positive constants $A$ and $B$. Then, the Kobayashi-Royden metric $\mathfrak{K}$ satisfies
\[
    C^{-1} | \xi |_{\omega} \le \mathfrak{K}(x, \xi) \le C | \xi |_{\omega}, \quad \textup{for all $x \in M$ and $\xi \in T'_x M$}.
\]
Here $C>0$ is a constant depending only on $A$, $B$ and $\dim M$.
\end{theorem}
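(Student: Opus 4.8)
The plan is to establish the two inequalities separately. The left inequality $C^{-1}|\xi|_\omega \le \mathfrak{K}(x,\xi)$ is the easy direction: as noted in the excerpt, it follows from the Schwarz lemma of Ahlfors type together with the upper bound $H(\omega) \le -A < 0$. Concretely, any holomorphic map $f\colon \mathbb{D} \to M$ from the unit disk with its Poincar\'e metric of curvature $-A$ is distance-decreasing onto $(M,\omega)$ (after normalizing constants), which forces $\mathfrak{K}(x,\xi) \ge \sqrt{A/2}\,|\xi|_\omega$ or the like; this is recorded as Lemma~\ref{le:KR-left}. So the whole content is the right inequality $\mathfrak{K}(x,\xi) \le C|\xi|_\omega$, i.e., constructing, for every point $x$ and tangent vector $\xi$, a holomorphic disk through $x$ in the direction $\xi$ that is not too small.

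For the upper bound, the strategy I would pursue is to produce a holomorphic embedding of a fixed-size Euclidean ball into $M$ centered at $x$, with uniform control on its size and on the metric distortion. The natural tool is the lower bound $H(\omega) \ge -B$ together with completeness: negatively pinched holomorphic sectional curvature should give, via comparison, a definite-size region around $x$ on which one can build holomorphic coordinates. A clean route is to use the $L^2$-estimates for $\bar\partial$ (H\"ormander's technique) to construct holomorphic functions $z^1,\dots,z^n$ on a ball $B_\omega(x,\rho)$ (with $\rho$ depending only on $A$, $B$) whose differentials at $x$ are prescribed and which are uniformly bounded in $C^1$ on a smaller ball; the curvature pinching enters through a uniform lower bound on the volume of balls, through a bound on the first eigenvalue / Sobolev constant, and through plurisubharmonic weights built from the (controlled) distance function. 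Alternatively, one may directly invoke a uniform version of a result giving a quantitative Cheng--Yau / Mok--Siu--Yau type bi-Lipschitz holomorphic chart; but the self-contained way is the $\bar\partial$-construction. Once we have such a chart $\Phi\colon B_{\mathrm{eucl}}(0,r) \hookrightarrow M$ with $\Phi(0) = x$, $d\Phi_0$ controlled, and $\Phi$ uniformly bi-Lipschitz, we restrict to the affine disk $t \mapsto \Phi(t\,v)$ in the direction corresponding to $\xi$, rescale the disk to the unit disk, and read off $\mathfrak{K}(x,\xi) \le C|\xi|_\omega$ from the definition of the Kobayashi--Royden metric as an infimum over such disks.

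A subtle point to handle is that the Kobayashi--Royden metric is defined via maps from the \emph{unit disk} into $M$, not into a chart, so the local holomorphic chart by itself suffices: the composition $\mathbb{D} \xrightarrow{\ t\mapsto rt v/|v|\ } B_{\mathrm{eucl}}(0,r) \xrightarrow{\ \Phi\ } M$ is a legitimate competitor, and its derivative at $0$ has $\omega$-length comparable to $r^{-1}|\xi|_\omega$, giving the bound with $C$ of order $r^{-1}$ times the distortion constant. Thus no global hyperbolicity or embedding of $M$ into $\mathbb{C}^n$ is needed—only a uniform-size good chart. Everything then reduces to: (a) a uniform lower bound $\rho_0 = \rho_0(A,B)>0$ for the radius on which the $\bar\partial$-machine works, and (b) uniform $C^1$ bounds on the constructed coordinates on $B_\omega(x,\rho_0/2)$, both of which follow from the pinching via volume comparison (Bishop--Gromov type, using the lower curvature bound), the plurisubharmonicity of suitable functions of the distance (using that $-B \le H(\omega)$ controls the complex Hessian of $\mathrm{dist}^2$), and elliptic estimates.

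The main obstacle I anticipate is step (a)–(b): getting genuinely \emph{uniform} constants, depending only on $A$, $B$, and $n = \dim M$, out of the $\bar\partial$-estimates. The difficulty is that H\"ormander's estimate needs a strictly plurisubharmonic weight with a positive lower bound on its complex Hessian on a region of controlled size, and producing such a weight requires a good exhaustion-type function near $x$ whose Levi form is pinched from below—this is exactly where one must use the lower bound on holomorphic sectional curvature to control $i\partial\bar\partial$ of (a cutoff times) the squared distance, and simultaneously use the upper bound to keep the region from collapsing. Managing the interplay between the cutoff, the singularity of the weight at $x$ (to force $z^j(x)=0$ and $dz^j_x$ prescribed), and the curvature terms, all with explicit constants, is the technical heart of the argument; I would expect this to occupy the bulk of the proof, with the reduction to the Kobayashi metric via rescaled disks being a short final paragraph.
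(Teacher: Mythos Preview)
Your plan is correct and is essentially the paper's route: the lower bound is Lemma~\ref{le:KR-left} (Schwarz lemma), and the upper bound is obtained by constructing, at each point, a nonsingular holomorphic map from a Euclidean ball of fixed radius with controlled metric distortion (Lemma~\ref{le:WY-KR}), the chart being produced exactly by the $L^2$ $\bar\partial$-argument you describe (Lemma~\ref{le:WYdelta1}, Theorem~\ref{th:WYqc}).

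Two points of comparison are worth noting. First, since the injectivity radius may be zero, the paper does not build a holomorphic \emph{embedding}: it pulls back the K\"ahler structure to $T_P M$ via $\exp_P$ (nonsingular on a ball of radius $\pi/(2\sqrt{C_0})$ by the curvature bound), runs the $\bar\partial$-construction there, and obtains only a nonsingular holomorphic map $\psi\colon B_{\mathbb{C}^n}(0;r)\to M$---a \emph{quasi}-coordinate. As you correctly observe, this is all that the competitor disk for $\mathfrak{K}$ needs. Second, the paper inserts an extra smoothing step (Lemma~\ref{le:Shi}, Shi's short-time Ricci flow) to pass to a uniformly equivalent metric with bounds on all covariant derivatives of $R_{\textup{m}}$, so as to invoke the full quasi-bounded-geometry package of Definition~\ref{de:qbg}. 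For the Kobayashi--Royden bound alone this is not strictly necessary: the chart radius and the equivalence~\eqref{eq:simEu} depend only on $|R_{\textup{m}}|\le C_0$, which already follows from the pinching $-B\le H(\omega)\le -A$ by polarization. So your more direct plan works; the Shi step is a convenience that unifies the treatment with the other invariant metrics. One small correction: it is the two-sided bound on the \emph{Riemannian} sectional curvature (a consequence of $|R_{\textup{m}}|\le C(A,B,n)$), not the one-sided bound $H(\omega)\ge -B$ by itself, that drives the Hessian comparison for $r^2$ and the Siu--Yau estimate~\eqref{eq:SYpwdbar}; adjust your remark accordingly.
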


Under the same condition as Theorem~\ref{th:WY-KR}, we construct a unique complete K\"ahler-Einstein metrics of negative Ricci curvature, and show that it is uniformly equivalent to the background K\"ahler metric.
\begin{theorem} \label{th:WYc1}
Let $(M, \omega)$ be a compete K\"ahler manifold whose holomorphic sectional curvature $H(\omega)$ satisfies $-\kappa_2 \le H(\omega) \le -\kappa_1$ for constants $\kappa_1, \kappa_2 > 0$.  Then $M$ admits a unique complete K\"ahler-Einstein metric $\omega_{\textup{KE}}$ with Ricci curvature equal to $-1$, satisfying
\[
   C^{-1} \omega \le \omega_{\textup{KE}} \le C \omega \quad \textup{on $M$}
\]
for some constant $C>0$ depending only on $\dim M$, $\kappa_1$ and $\kappa_2$. Furthermore, the curvature tensor $R_{\textup{m}, \textup{KE}}$ of $\omega_{\textup{KE}}$ and all its covariant derivatives are bounded; that is, for each $l \in \mathbb{N}$,
\[
    \sup_{x \in M} \big|\nabla^l R_{\textup{m}, \textup{KE}} (x) \big|_{\omega_{\textup{KE}}} \le C_l
\]
where $C_l > 0$ depends only on $l$, $\dim M$, $\kappa_1$, and $\kappa_2$.
\end{theorem}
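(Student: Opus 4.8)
The plan is to produce the Kähler--Einstein metric by solving the complex Monge--Ampère equation $\omega_u^n = e^u \omega^n$ with $\omega_u = \omega + \sqrt{-1}\,\partial\bar\partial u$, which, once a bounded solution $u$ with $\omega_u$ comparable to $\omega$ is found, automatically has $\Ric(\omega_u) = -\omega_u$. The heart of the matter is an a priori $C^0$ estimate. Here I would exploit the pinching hypothesis on $H(\omega)$ exactly as in the proof of Theorem~\ref{th:WY-KR}: the upper bound $H(\omega)\le-\kappa_1$ together with a Schwarz-type lemma (Yau's version, stated in the excerpt as Lemma (Yau)) gives an upper bound for the would-be Kähler--Einstein volume form in terms of $\omega^n$, while the lower bound $H(\omega)\ge-\kappa_2$ gives the reverse inequality; equivalently, one shows $|u|\le C(\dim M,\kappa_1,\kappa_2)$. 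A clean way to organize this is to exhaust $M$ by smoothly bounded domains $\Omega_j\Subset M$, solve the Dirichlet problem $\omega_{u_j}^n = e^{u_j}\omega^n$ on $\Omega_j$ with $u_j=0$ on $\p\Omega_j$ (Cheng--Yau / Mok--Yau theory), obtain $C^0$, then $C^2$, then higher-order estimates uniform in $j$, and pass to a subsequential limit.

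After the $C^0$ bound, I would derive the second-order estimate. The standard route is the Aubin--Yau inequality for $\Delta_{\omega_u}\log\tr_\omega\omega_u$ (or $\Delta_\omega\log\tr_{\omega_u}\omega$), combined with the $C^0$ bound on $u$ and a lower bound on the bisectional curvature of the background metric. The subtlety is that the hypothesis only controls the holomorphic sectional curvature of $\omega$, not the full bisectional curvature; this is where I expect the main obstacle to lie. I would handle it the way it is surely handled earlier in the paper: rather than estimating $\tr_\omega\omega_u$ against the background curvature directly, use the already-constructed comparison from Theorem~\ref{th:WY-KR} or an auxiliary metric with controlled curvature, or apply the maximum principle to $\log\tr_\omega\omega_u - A u$ for a suitable constant $A$ absorbing the bad curvature terms after invoking Yau's Schwarz lemma twice (once in each direction). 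On the bounded domains $\Omega_j$ one also needs boundary $C^2$-estimates, but with the zero boundary data and the pinching these are classical and uniform.

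Once $\omega\le C\omega_u\le C'\omega$ is established uniformly, the metric $\omega_u$ has bounded geometry: its curvature tensor is controlled because the equation is uniformly elliptic with uniformly bounded coefficients in harmonic (or $\omega$-geodesic) coordinates, so local elliptic Schauder and $L^p$ estimates, bootstrapped, give uniform $C^\infty$ bounds on $u$ on compact sets, hence a smooth limit $u$ on all of $M$ with $C^{-1}\omega\le\omega_u\le C\omega$. For the higher covariant-derivative bounds $\sup_M|\nabla^l R_{\textup{m},\ke}|_{\omega_\ke}\le C_l$, I would invoke the standard Calabi--Yau third-order estimate (the bound on $|\nabla^{\omega}\nabla^{\omega_u}u|$, i.e. Calabi's $S$-tensor) to get $C^3$, and then run Evans--Krylov plus Schauder bootstrapping in the uniformly bounded geometry; equivalently, one may cite Mok--Yau or the interior Schauder estimates for Monge--Ampère on manifolds of bounded geometry. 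Uniqueness of the complete Kähler--Einstein metric with $\Ric=-1$ follows from the Aubin--Yau/Yau--Cheng comparison: given two such, their ratio of volume forms is the exponential of a bounded function whose $\omega_u$-Laplacian has a sign at its extrema, forcing the difference to be constant and then zero. The only genuinely delicate point, as noted, is feeding the holomorphic-sectional-curvature pinching — rather than a two-sided bisectional bound — into the $C^0$ and $C^2$ estimates, and I would resolve it by the same Schwarz-lemma mechanism that powers Theorem~\ref{th:WY-KR}.
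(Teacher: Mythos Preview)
Your proposal has a genuine error at the outset: the equation $\omega_u^n = e^u\omega^n$ with $\omega_u = \omega + dd^c u$ does \emph{not} produce a K\"ahler--Einstein metric unless $\textup{Ric}(\omega) = -\omega$ already. Indeed $\textup{Ric}(\omega_u) = -dd^c\log\omega_u^n = -dd^c u + \textup{Ric}(\omega) = -\omega_u + \omega + \textup{Ric}(\omega)$. The hypothesis says nothing about the sign of $\textup{Ric}(\omega)$ or about $[\omega] = -c_1(M)$, so this setup cannot work. The paper instead seeks $\omega_{\textup{KE}} = dd^c\log\omega^n + dd^c u$, i.e.\ works in the class of $-\textup{Ric}(\omega)$, and because $-\textup{Ric}(\omega)$ need not be positive it runs a continuity method through the family
\[
   (t\omega + dd^c\log\omega^n + dd^c u)^n = e^u \omega^n,
\]
starting at $t$ large (where $t\omega - \textup{Ric}(\omega) > 0$ and Cheng--Yau applies directly) and decreasing to $t=0$. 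Along this path $\textup{Ric}(\omega_t) = -\omega_t + t\omega \ge -\omega_t$, which is exactly what feeds the Schwarz-type step. No Dirichlet problems or boundary estimates enter.

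Second, your account of how the pinching is used is off. The lower bound $H(\omega)\ge -\kappa_2$ is \emph{not} used for a reverse Schwarz inequality on the volume form; its sole role is (together with the upper bound) to bound $|R_{\textup{m}}|$, which then feeds into Shi's Ricci-flow smoothing (Lemma~\ref{le:Shi}) to replace $\omega$ by a uniformly equivalent metric with \emph{all} covariant derivatives of curvature bounded. You omit this step, but without it there is no quasi-bounded geometry and the Schauder bootstrap for the higher-order estimates has no uniform local coordinates to run in. Once that replacement is made, $\sup_M u \le C$ comes from the arithmetic--geometric mean inequality on the equation plus Yau's generalized maximum principle, while the bound on $\textup{tr}_{\omega_t}\omega$ (hence $\inf_M u$ and the full metric equivalence) comes from the Wu--Yau Schwarz-type inequality using only $H(\omega)\le -\kappa_1$ and the Ricci lower bound $\textup{Ric}(\omega_t)\ge -\omega_t$ built into the equation.
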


Theorem~\ref{th:WYc1} differs from the previous work on complete noncompact K\"ahler-Einstein metrics such as \cite{Cheng-Yau:1980, Cheng-Yau:1986, Tian-Yau:1987, Wu:2008} in that we put no assumption on the sign of Ricci curvature $\textup{Ric}(\omega)$ of metric $\omega$, nor on $\textup{Ric}(\omega) - \omega$. The proof makes use of a new complex Monge-Amp\`ere type equation, which involves the K\"ahler class of $t \omega - \textup{Ric}(\omega)$ rather than that of $\omega$.
This equation is inspired by our recent work~\cite{Wu-Yau:2015}. Theorem~\ref{th:WYc1} can be viewed as a complete noncompact generalization of \cite[Theorem 2]{Wu-Yau:2015} (for its generalizations on compact manifolds, see for example \cite{Tosatti-Yang:2015, Diverio-Trapani:2016, Wu-Yau:2016:quasi, Yang-Zheng:2017}.)

We now discuss the second conjecture of Greene-Wu concerning the Bergman metric. Greene-Wu has obtained the following result, motivated by the work of the second author and Y. T. Siu~\cite{Siu-Yau:1977}.
\begin{theorem}[{\cite[p. 144, Theorem H (3)]{Greene-Wu:1979}}] \label{th:GrWuH}
Let $(M, \omega)$ be a simply-connected complete K\"ahler manifold such that $-B \le \textup{sectional curvature} \le -A$ for some positive constants $A$ and $B$. Then, $M$ possesses a complete Bergman metric $\omega_{\mathfrak{B}}$ satisfying
\[
   \omega_{\mathfrak{B}} \ge C \omega \quad \textup{on $M$},
\]
for some constant $C>0$ depending only on $\dim M$, $A$, and $B$. Moreover, the Bergman kernel form $\mathfrak{B}$ on $M$ satisfies
\begin{equation} \label{eq:qsBkf}
    A_1 \omega^n \le \mathfrak{B} \le A_2 \omega^n \quad \textup{on $M$},
\end{equation}
for some positive constants $A_1, A_2$ depending only on $\dim M$, $A$, and $B$.
\end{theorem}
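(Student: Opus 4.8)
The plan is to construct the Bergman metric from H\"ormander's $L^2$-estimates for $\bar\partial$, using the pinched negative curvature to produce, \emph{at each point separately}, a bounded plurisubharmonic weight whose complex Hessian is uniformly positive near that point, and then to read off the two pointwise comparisons from the extremal descriptions of the Bergman kernel form and of the Bergman metric. For the geometric setup: being simply connected, complete and negatively curved, $M$ is Cartan--Hadamard, so $\exp_x$ is a diffeomorphism, the injectivity radius is infinite, and $M$ is Stein (it carries the strictly plurisubharmonic exhaustion $\cosh(\sqrt{A}\,d(\cdot,x))$, by the Hessian comparison theorem applied to sectional curvature $\le -A$). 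Fix $x_0\in M$ and put $\varphi_{x_0}:=\cosh(\sqrt{A}\,d(\cdot,x_0))$; then $i\partial\bar\partial\varphi_{x_0}\ge c_0(A)\,\omega$ on $M$ and $\varphi_{x_0}(x_0)=1$, so $\rho_{x_0}:=-1/\varphi_{x_0}$ is bounded, nonpositive and strictly plurisubharmonic with $i\partial\bar\partial\rho_{x_0}\ge c_0\varphi_{x_0}^{-2}\,\omega\ge c_1(A)\,\omega$ on $B_\omega(x_0,1)$, $c_1$ independent of $x_0$. Finally, since the injectivity radius is infinite and the curvature is two-sided bounded, there is $\varepsilon=\varepsilon(n,A,B)>0$ so that every $B_\omega(x_0,\varepsilon)$ carries a holomorphic coordinate chart $z$, centred at $x_0$, in which $\tfrac12\delta\le (g_{i\bar j})\le 2\delta$ (Jost--Karcher harmonic-coordinate estimates together with the Newlander--Nirenberg theorem with estimates).

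\emph{The upper bound $\mathfrak B\le A_2\,\omega^n$.} Use the extremal description $\mathfrak B(x)/\omega^n(x)=\sup\{|\sigma(x)|_\omega^2/\|\sigma\|^2:0\ne\sigma\in A^2(M)\}$, with $A^2(M)$ the Hilbert space of $L^2$ holomorphic $n$-forms and $|\cdot|_\omega$ the pointwise norm determined by $\omega$. For $\sigma\in A^2(M)$ one has $i\partial\bar\partial\log|\sigma|_\omega^2=\Ric(\omega)$ off the zero set, hence $(\Delta_\omega+c(n)B)\,|\sigma|_\omega^2\ge 0$ on $M$ distributionally; Moser iteration for $\Delta_\omega+c(n)B$ on $B_\omega(x_0,1)$ — whose Sobolev constant is controlled by $n,B$ through $\Ric(\omega)\ge-(2n-1)B$, and whose volume is bounded below in terms of $n,A$ by G\"unther's inequality (using sectional curvature $\le-A$ and infinite injectivity radius) — gives $|\sigma(x_0)|_\omega^2\le A_2\,\fint_{B_\omega(x_0,1)}|\sigma|_\omega^2\,\omega^n\cdot\vol_\omega B_\omega(x_0,1)\le A_2\,\|\sigma\|^2$, i.e.\ $\mathfrak B\le A_2\,\omega^n$.

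\emph{The lower bounds.} Work in the chart on $B_\omega(x_0,\varepsilon)$, and let $\Omega_{x_0}=c\,dz^1\wedge\cdots\wedge dz^n$ be normalised to $|\Omega_{x_0}(x_0)|_\omega=1$ (so $|\Omega_{x_0}|_\omega\asymp 1$ there). Solve $\bar\partial u=\bar\partial(\theta\,\Omega_{x_0})=\bar\partial\theta\wedge\Omega_{x_0}$, for a cut-off $\theta$ equal to $1$ near $x_0$ and supported in $B_\omega(x_0,\varepsilon)$, against the weight $\psi=2n\chi\log|z|^2+\lambda\rho_{x_0}$, where $\chi$ is a cut-off supported in $B_\omega(x_0,\varepsilon)$, equal to $1$ on a neighbourhood of $\mathrm{supp}\,\theta$, and $\lambda=\lambda(n,A,B)$ is large enough that $i\partial\bar\partial\psi\ge 0$ on $M$ and $i\partial\bar\partial\psi\gtrsim\omega$ on $\mathrm{supp}\,d\theta$ (this uses the uniform Hessian bound for $\rho_{x_0}$ near $x_0$ to absorb the negative contribution of $d\chi$). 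Since $\psi\le 0$ on $M$, H\"ormander's estimate produces $u$ with $\int_M|u|^2\,\omega^n\le\int_M|u|^2 e^{-\psi}\,\omega^n\lesssim\int_{\mathrm{supp}\,d\theta}|\bar\partial\theta\wedge\Omega_{x_0}|_\omega^2\,e^{-\psi}\,\omega^n\le C(n,A,B)$, the last integral being uniform in $x_0$ because $|\bar\partial\theta|$, $|\Omega_{x_0}|_\omega$ and $e^{-\psi}$ are all bounded by constants depending only on $n,A,B$ on $\mathrm{supp}\,d\theta$; the pole forces $u(x_0)=0$, so $\sigma:=\theta\,\Omega_{x_0}-u\in A^2(M)$ has $|\sigma(x_0)|_\omega=1$ and $\|\sigma\|^2\le C(n,A,B)$, whence $\mathfrak B(x_0)/\omega^n(x_0)\ge A_1:=C(n,A,B)^{-1}$. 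Replacing the source by $\theta\,\ell\,\Omega_{x_0}$, with $\ell$ holomorphic on the chart, $\ell(x_0)=0$, $d\ell(x_0)=\xi^\flat$ prescribed, and strengthening the pole to $(2n+2)\chi\log|z|^2$, the same scheme yields $\sigma\in A^2(M)$ with $\sigma(x_0)=0$, $\partial\sigma(x_0)=\xi^\flat\otimes\Omega_{x_0}(x_0)$ and $\|\sigma\|^2\le C(n,A,B)\,|\xi|_\omega^2$. Together with base-point freeness this makes the Bergman map $M\to\mathbb P(A^2(M)^\ast)$ a holomorphic immersion, so $\omega_{\mathfrak B}=i\partial\bar\partial\log K$ ($K$ the Bergman kernel function in a chart) is a genuine K\"ahler metric, and the extremal identity $\omega_{\mathfrak B}(x_0)(\xi,\bar\xi)=K(x_0)^{-1}\sup\{|\xi^i\partial_i\sigma(x_0)|^2:\|\sigma\|=1,\ \sigma(x_0)=0\}$, combined with $K(x_0)\asymp 1$ (from \eqref{eq:qsBkf} in the chart) and the interpolating form just produced, gives $\omega_{\mathfrak B}(x_0)(\xi,\bar\xi)\ge C^{-1}|\xi|_\omega^2$; completeness of $\omega_{\mathfrak B}$ then follows from $\omega_{\mathfrak B}\ge C^{-1}\omega$ and completeness of $\omega$.

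\emph{The main obstacle.} Every constant above must depend only on $n,A,B$, and the H\"ormander step is what resists this, for a structural reason: no \emph{global} bounded plurisubharmonic function can have complex Hessian bounded below by a fixed positive multiple of $\omega$, since it would then be uniformly convex along geodesics. The device that rescues uniformity is to recentre the comparison function at the point under consideration, using $\rho_{x_0}=-1/\cosh(\sqrt{A}\,d(\cdot,x_0))$ — bounded on all of $M$ yet with Hessian uniformly $\gtrsim\omega$ on $B_\omega(x_0,1)$ precisely because $\varphi_{x_0}(x_0)=1$ — and to localise all weights inside the fixed-size chart. Two points should require genuine care: establishing the uniform-size holomorphic chart with uniformly Euclidean metric purely from the curvature bounds, and pushing the jet-interpolation estimate through with the correct power of $|\xi|_\omega$ uniformly in $x_0$; the latter is the heart of the bound $\omega_{\mathfrak B}\ge C\omega$ and is where I expect the bookkeeping to be most delicate.
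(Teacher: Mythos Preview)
The paper does not give its own proof of this statement: Theorem~\ref{th:GrWuH} is quoted verbatim from Greene--Wu~\cite[Theorem~H~(3)]{Greene-Wu:1979}, whose argument in turn rests on the $L^2$-techniques of Siu--Yau~\cite{Siu-Yau:1977}. The paper uses only the conclusion (specifically the lower bound $\mathfrak B\ge A_1\omega^n$, noted after Lemma~\ref{le:WBeM} to be implicit in \cite[p.~248]{Siu-Yau:1977}) as input for Theorem~\ref{th:WY-B}.

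Your outline is essentially the Siu--Yau/Greene--Wu argument: Hessian comparison makes $M$ Stein with explicit psh weights, H\"ormander with a logarithmic pole produces $L^2$ holomorphic $n$-forms with prescribed $0$-jet (whence $\mathfrak B\ge A_1\omega^n$) and $1$-jet (whence $\omega_{\mathfrak B}\ge C\omega$), and a mean-value inequality gives $\mathfrak B\le A_2\omega^n$. Two points deserve care. First, your weight $\rho_{x_0}=-1/\cosh(\sqrt A\,r)$ \emph{is} globally plurisubharmonic, but not for the reason you state: its real Hessian is indefinite in the radial direction once $\sinh(\sqrt A\,r)>1$. What saves the Levi form is that $i\p\bar\p\rho_{x_0}(\xi,\bar\xi)$ averages $\nabla^2\rho_{x_0}(X,X)+\nabla^2\rho_{x_0}(JX,JX)$, and Bessel's inequality for the orthonormal pair $\{\nabla r,J\nabla r\}$ gives $(dr(X))^2+(dr(JX))^2\le 1$, so the $\sinh^2$-term is dominated by $\cosh^2$ and one finds $i\p\bar\p\rho_{x_0}\ge \tfrac{A}{2}\cosh^{-3}(\sqrt A\,r)\,\omega>0$ on all of $M$; you should record this, since it is the hinge of your uniformity claim and is the step most likely to be challenged. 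Second, for the uniform-radius holomorphic chart you invoke Jost--Karcher plus Newlander--Nirenberg-with-estimates; this works, but the present paper offers a self-contained alternative (Lemma~\ref{le:WYdelta1} and Theorem~\ref{th:WYqc}) that builds such charts directly by $L^2$-methods, with radius depending only on $n$ and the curvature bound, and which you could cite instead.
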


It is shown in \cite[p. 144, Theorem H (2)]{Greene-Wu:1979} that, if a simply-connected complete K\"ahler manifold $M$ satisfies $-B/ r^2 \le \textup{sectional curvature} \le - A/r^2$ outside a compact subset of $M$, then $M$ possesses a complete Bergman metric, where $r$ is the distance from a fixed point. Greene-Wu proposed two conjectures concerning their Theorem H. The first conjecture is that the lower bound $-B/r^2$ in the hypothesis of Theorem~H~(2) can be removed. This has been settled by B.~Y.~Chen and J. H. Zhang~\cite{Chen-Zhang:2002}. The second conjecture is as below. 
\begin{conj}[{\cite[p. 145, Remark (3)]{Greene-Wu:1979}}] \label{con:GW-B}
The Bergman metric $\omega_{\mathfrak{B}}$ obtained in Theorem~\ref{th:GrWuH} satisfies
\[
    \omega_{\mathfrak{B}} \le C_1 \omega \quad \textup{on $M$}
\]
for some constant $C_1>0$. As a consequence, the Bergman metric $\omega_{\mathfrak{B}}$ is uniformly equivalent to the background K\"ahler metric $\omega$.
\end{conj}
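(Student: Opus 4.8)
The plan is to establish the upper bound $\omega_{\mathfrak B}\le C_1\,\omega$ by combining the two-sided bound on the Bergman kernel form from Theorem~\ref{th:GrWuH} with a Cheng--Yau type interior gradient estimate for the Bergman kernel, exploiting the uniform bounds on the Kähler--Einstein metric and its curvature provided by Theorem~\ref{th:WYc1}. Recall that at a point $x$, after choosing a local holomorphic frame, the Bergman metric is the complex Hessian $\partial\bar\partial\log K$ of the logarithm of the Bergman kernel function $K$, and that $\mathfrak K(x,\xi)$ can be computed extremally in terms of peak sections: indeed, for a unit tangent vector $\xi$, the Bergman metric norm $|\xi|_{\omega_{\mathfrak B}}^2$ equals, up to $K(x)$, the supremum of $|df(\xi)|^2$ over holomorphic $L^2$-functions $f$ with $f(x)=0$ and $\|f\|_{L^2}\le 1$. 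So the task is to bound this derivative quantity from above by $C|\xi|_\omega^2$.

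The key steps, in order, are as follows. First I would fix $x\in M$ and use Theorem~\ref{th:WY-KR} together with the completeness and curvature pinching to produce, via the Schwarz lemma, a holomorphically embedded ball $B_\omega(x,\rho)$ of a definite radius $\rho>0$ (depending only on $\kappa_1,\kappa_2,\dim M$) on which $\omega$ is uniformly equivalent to the Euclidean metric in suitable coordinates; this is where the lower curvature bound is genuinely used. Second, on this fixed-size coordinate ball I would invoke the upper bound $\mathfrak B\le A_2\omega^n$ from \eqref{eq:qsBkf} to get an a priori sup bound on $K$, and then apply the standard interior elliptic (sub-mean-value) estimate for the holomorphic function $f$ — or equivalently a Cauchy-type estimate on $\partial\bar\partial\log K$ — to bound $|\partial\bar\partial\log K|$ at $x$ by a constant times $\sup_{B}K$ divided by the lower bound $\inf_B K\ge$ (const)$\cdot$(from $A_1\omega^n\le\mathfrak B$). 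The two-sided bound \eqref{eq:qsBkf} is exactly what makes the ratio $\sup K/\inf K$ controlled on a ball of fixed size. Third, translating back: $|\xi|^2_{\omega_{\mathfrak B}}=(\partial\bar\partial\log K)(\xi,\bar\xi)\le C_1\,|\xi|_\omega^2$, which is the desired inequality; the uniform equivalence then follows by combining with the already-known reverse bound $\omega_{\mathfrak B}\ge C\omega$ of Theorem~\ref{th:GrWuH}.

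The main obstacle I anticipate is the second step: making the interior derivative estimate for $\log K$ genuinely uniform in $x$. The difficulty is that $\log K$ is not a priori bounded on both sides on a ball of \emph{fixed} geometric size unless one controls the local geometry of $\omega$ uniformly — which in turn requires not just the pinching of $H(\omega)$ but control of the injectivity radius and of higher covariant derivatives of curvature near $x$. This is precisely why Theorem~\ref{th:WYc1} is needed: one first transfers the estimates to the Kähler--Einstein metric $\omega_{\mathrm{KE}}$, whose curvature and all covariant derivatives are uniformly bounded and which is uniformly equivalent to $\omega$, so that $(M,\omega_{\mathrm{KE}})$ has uniformly bounded geometry and a uniform lower bound on injectivity radius; one then runs the Bergman kernel estimates with respect to $\omega_{\mathrm{KE}}$ and converts back. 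A secondary technical point is handling the normalization of the peak section and the dependence of the $L^2$ bounds on the volume form — but since $\mathfrak B$, $\omega^n$ and $\omega_{\mathrm{KE}}^n$ are all mutually uniformly equivalent by \eqref{eq:qsBkf} and Theorem~\ref{th:WYc1}, these comparisons are harmless. Once the uniformly-bounded-geometry picture is set up, the estimate is a routine application of the Bochner--Kodaira technique and interior Schauder/$L^2$ estimates.
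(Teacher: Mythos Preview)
Your overall strategy---produce uniform holomorphic coordinate balls of fixed radius, then bound $\partial\bar\partial\log K$ there by interior estimates combined with the two-sided bound \eqref{eq:qsBkf} on the kernel---is essentially the paper's. The paper carries it out as Lemma~\ref{le:WBeM}: pull back an orthonormal basis $\{\phi_j\}$ of $\mathcal H$ to functions $f_j$ on a Euclidean ball $B(r)$, observe that $L^2$-orthonormality forces the hypothesis \eqref{eq:fj<1}, and then apply the elementary mean-value/Cauchy interior estimates (Lemma~\ref{le:insupB}, Corollary~\ref{co:WintdK}) to bound all derivatives of $b(v,v)=\sum_j|f_j(v)|^2$. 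Only the \emph{lower} bound $\mathfrak B\ge c_0\,\omega^n$ is then needed to control $g_{\mathfrak B,i\bar j}=b^{-1}\partial_i\partial_{\bar j}b-b^{-2}\partial_i b\,\partial_{\bar j}b$; the upper bound on $b$ already comes for free from \eqref{eq:fj<1}. No Bochner--Kodaira or peak-section argument is used.

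Where your proposal has a genuine gap is in how you manufacture the uniform coordinate balls. You assert that passing to $\omega_{\textup{KE}}$ via Theorem~\ref{th:WYc1} yields ``a uniform lower bound on injectivity radius''. Theorem~\ref{th:WYc1} does give bounded curvature and bounded covariant derivatives of $R_{\textup m,\textup{KE}}$, hence \emph{quasi}-bounded geometry; but it says nothing about the sign of the sectional curvature of $\omega_{\textup{KE}}$, so Cartan--Hadamard does not apply to $\omega_{\textup{KE}}$, and nothing in your argument rules out short closed $\omega_{\textup{KE}}$-geodesics. (Uniform equivalence with $\omega$ does not transfer injectivity-radius bounds.) For the Bergman-kernel step one really needs the chart $\psi_p$ to be \emph{injective}: the key inequality $\int_{B(r)}|h|^2\,dV\le\|\phi\|^2$ fails for a non-injective quasi-coordinate map. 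The paper avoids this by invoking Shi's lemma in its sectional-curvature form (Lemma~\ref{le:Kpinch}), replacing $\omega$ by an equivalent metric $\tilde\omega$ that still satisfies $-\tilde\kappa_2\le K(\tilde\omega)\le-\tilde\kappa_1<0$ \emph{and} has all curvature derivatives bounded; Cartan--Hadamard then gives infinite injectivity radius for $\tilde\omega$, and Theorem~\ref{th:WYqc}(2) delivers genuine bounded geometry. The appeal to Theorem~\ref{th:WY-KR} in your first step is a red herring.
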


 Conjecture~\ref{con:GW-B} now follows from the following result.
\begin{theorem}\label{th:WY-B}
Let $(M, \omega)$ be a complete, simply-connected, K\"ahler manifold such that $-B \le \textup{sectional curvature} \le -A<0$ for some positive constants $A$ and $B$. Then, Bergman metric $\omega_{\mathfrak{B}}$ has bounded geometry, and satisfies
\[
    \omega_{\mathfrak{B}} \le C_1 \omega \quad \textup{on $M$},
\]
where the constant $C_1>0$ depending only on $A$, $B$, and $\dim M$. As a consequence, the Bergman metric $\omega_{\mathfrak{B}}$ is uniformly equivalent to the K\"ahler metric $\omega$.
\end{theorem}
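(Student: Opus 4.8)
The lower bound $\omega_{\mathfrak B}\ge C\omega$, the completeness of $\omega_{\mathfrak B}$, and the two-sided estimate $A_1\omega^n\le\mathfrak B\le A_2\omega^n$ are already provided by Theorem~\ref{th:GrWuH}. The plan is therefore to prove the reverse bound $\omega_{\mathfrak B}\le C_1\omega$ and to upgrade it to the assertion that $\omega_{\mathfrak B}$ has bounded geometry; both will follow from a single uniform fact, namely that holomorphic $(n,0)$-forms, and the Bergman kernel itself, can be estimated pointwise together with all their derivatives in terms of their $L^2$-norms, with constants depending only on $A$, $B$ and $n=\dim M$.

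\emph{Step 1: uniform holomorphic charts.} Since $M$ is simply connected with $\mathrm{sec}\le -A<0$, the Cartan--Hadamard theorem makes $\exp_p$ a diffeomorphism, so the injectivity radius of $\omega$ is infinite; together with $|\mathrm{sec}|\le\max(A,B)$, Rauch comparison shows that around each $p$ a ball of unit radius is uniformly bi-Lipschitz to a Euclidean ball, and volume comparison (using $\mathrm{sec}\le0$) bounds the volume of unit $\omega$-balls below by that of a Euclidean unit ball. To turn the bi-Lipschitz ball into a \emph{holomorphic} chart on which $\omega$ is uniformly $C^k$-close to the flat metric I would pass to the K\"ahler--Einstein metric $\omega_{\mathrm{KE}}$ of Theorem~\ref{th:WYc1}: it is uniformly equivalent to $\omega$, its curvature tensor and all covariant derivatives are bounded, and---by the volume lower bound just mentioned together with the theorem of Cheeger--Gromov--Taylor---its injectivity radius is bounded below, so $\omega_{\mathrm{KE}}$ has bounded geometry in the strong sense. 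This yields a uniform $r_0>0$ and holomorphic embeddings $\Phi_p\colon B_{r_0}\hookrightarrow M$ with $\Phi_p(0)=p$ on which $\tfrac12 g_{\mathrm{eucl}}\le\Phi_p^{*}g\le 2g_{\mathrm{eucl}}$, with uniform bounds on all derivatives; the Euclidean volume form is then comparable, with uniform constants, to $\omega^n$.

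\emph{Step 2: Cauchy estimates and the extremal characterization.} The $L^2$-norm of an $(n,0)$-form $u=f\,dz^1\wedge\cdots\wedge dz^n$ is metric-independent, equal up to a dimensional constant to $\int|f|^2\,d\mathrm{Leb}$ in any holomorphic chart. On $\Phi_p$, since $|f|^2$ and each $|\partial_k f|^2$ are plurisubharmonic and $d\mathrm{Leb}\asymp\omega^n$, the sub-mean value inequality gives $|f(p)|^2+\sum_i|\partial_i f(p)|^2\le C\|u\|^2$ with $C=C(A,B,n)$; applied likewise to the Bergman kernel $K(z,\bar w)=\sum_j s_j(z)\overline{s_j(w)}$, together with the two-sided bound $c\le K(z,z)\le C$ inherited from $A_1\omega^n\le\mathfrak B\le A_2\omega^n$, it gives uniform $C^k$-bounds for $K$ and hence for $\log K$ on each chart. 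Feeding the estimate for forms into the extremal formula for the Bergman metric,
\[
   |\xi|^2_{\omega_{\mathfrak B}}(p)=\frac{\sup\bigl\{\,\bigl|\sum_{i}\xi^i\partial_i f(p)\bigr|^2 : \|f\,dz^1\wedge\cdots\wedge dz^n\|_{L^2}=1,\ f(p)=0\,\bigr\}}{K(p,p)},
\]
the numerator is $\le C|\xi|^2_{\mathrm{eucl}}\le C'|\xi|^2_\omega$ by Cauchy--Schwarz and Step~1, while the denominator is $\ge c>0$, whence $\omega_{\mathfrak B}\le C_1\omega$; combined with Theorem~\ref{th:GrWuH} this gives the uniform equivalence. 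Finally, the uniform $C^k$-bounds for $\log K$ on charts of definite size say precisely that $\omega_{\mathfrak B}=\sqrt{-1}\,\partial\bar\partial\log K$ has bounded curvature and bounded covariant derivatives of the curvature; since $\omega_{\mathfrak B}$ is uniformly equivalent to the nonpositively curved complete metric $\omega$, its injectivity radius is bounded below as well, so $\omega_{\mathfrak B}$ has bounded geometry.

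\emph{Main obstacle.} The crux is Step~1: constructing the uniform holomorphic charts with $C^k$-control of the metric. This is exactly where bounded geometry is indispensable, and the reason one must work through $\omega_{\mathrm{KE}}$ rather than $\omega$ itself, since the hypotheses control only the curvature of $\omega$ and none of its covariant derivatives. Once such charts are available, the remaining steps are the standard Cauchy and sub-mean value estimates together with the classical extremal description of the Bergman metric.
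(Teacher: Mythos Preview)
Your argument is correct and shares the paper's core strategy: manufacture uniform holomorphic coordinate balls, then use sub-mean-value / Cauchy interior estimates for the Bergman kernel (together with the lower bound on $\mathfrak B/\omega^n$ from Theorem~\ref{th:GrWuH}) to control all derivatives of $\log b$, hence $\omega_{\mathfrak B}$ and its geometry. The paper packages the interior estimate as Corollary~\ref{co:WintdK} and then differentiates $\log b$ directly rather than going through the extremal formula, but this is cosmetic.

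The one substantive difference is how the charts are produced. You pass to the K\"ahler--Einstein metric $\omega_{\mathrm{KE}}$ of Theorem~\ref{th:WYc1} and appeal to Cheeger--Gromov--Taylor for its injectivity radius. The paper instead applies Shi's Ricci-flow regularization (Lemma~\ref{le:Kpinch}) to obtain a metric $\tilde\omega\asymp\omega$ whose curvature has all covariant derivatives bounded \emph{and} whose sectional curvature is still negatively pinched; Cartan--Hadamard then gives infinite injectivity radius for free, and Theorem~\ref{th:WYqc}\,(2) supplies the charts. The paper's route is a bit more economical here (it avoids invoking the full K\"ahler--Einstein existence theorem and CGT), and it also lets one read off the bounded geometry of $\omega_{\mathfrak B}$ directly from the $C^k$ bounds on the components $g_{\mathfrak B,i\bar j}$ in those same biholomorphic charts, without a separate curvature-plus-injectivity-radius argument for $\omega_{\mathfrak B}$. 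One small slip in your write-up: the phrase ``with uniform bounds on all derivatives'' in Step~1 is justified for $\Phi_p^*g_{\mathrm{KE}}$, not for $\Phi_p^*g$ (the hypotheses give no control on $\nabla R_{\mathrm m}(\omega)$). Fortunately Step~2 uses only the zeroth-order equivalence $\Phi_p^*g\asymp g_{\mathrm{eucl}}$, so this does not affect the proof.
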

The  simply-connectedness assumption is necessary for the equivalence of $\omega_{\mathfrak{B}}$ and $\omega$. For example, let $M = \mathbb{P}^1\setminus \{0, 1, \infty\}$. Then, $M$ has a complete K\"ahler-Einstein metric with curvature equal to $-1$; however, $M$ admits no Bergman metric.
Another example is the punctured disk $\mathbb{D}^* = \mathbb{D} \setminus \{0\}$ together with the complete Poincar\'e metric $\omega_{\mathcal{P}} = \sqrt{-1} dz \wedge d \bar{z} / (|z| \log |z|^2)^2$. Note that the Bergman metric on $\mathbb{D}^*$ is $\omega_{\mathfrak{B}} = \sqrt{-1} dz \wedge d\bar{z} / (1 - |z|^2)^2$, which cannot dominate $\omega_{\mathcal{P}}$ at the origin.

The equivalence of $\omega_{\mathfrak{B}}$ and $\omega$ in Theorem~\ref{th:WY-B} has been known in several cases: For instance, when $M$ is a bounded strictly pseudoconvex domain with smooth boundary, this can be shown by using the asymptotic expansion of Monge-Amp\`ere equation (see \cite{Beals-Fefferman-Grossman:1983} for example). The second author with K. Liu and X. Sun \cite{Liu-Sun-Yau:2004} has proved the result for $M$ being the Teichm\"uller space and the moduli space of Riemann surfaces, on which they in fact show that several classical and new metrics are all uniformly equivalent (see also \cite{Yeung:2005}); compare Corollary~\ref{co:WY-3} below.

As a consequence of the above theorems, we obtain the following result on a complete, simply-connected, K\"ahler manifold with negatively pinched sectional curvature. 
\begin{coro} \label{co:WY-3}
Let $(M, \omega)$ be a complete, simply-connected, K\"ahler manifold satisfying $- B \le \textup{sectional curvature} \le -A$ for two positive constants $A$ and $B$. Then, the K\"ahler-Einstein metric $\omega_{\textup{KE}}$, the Bergman metric $\omega_{\mathfrak{B}}$, and the Kobayashi-Royden metric $\mathfrak{K}$ all exist, and are all uniformly equivalent to $\omega$ on $M$, where the equivalence constants depend only on $A$, $B$, and $\dim M$.
\end{coro}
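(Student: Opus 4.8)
\begin{spf}
The plan is to assemble the statement from Theorems~\ref{th:WY-KR}, \ref{th:WYc1}, and~\ref{th:WY-B}; no new analysis is needed, only the bookkeeping of the hypotheses together with the transitivity of uniform equivalence. First I would record the elementary observation that the holomorphic sectional curvature $H(\omega)$ is precisely the sectional curvature of a $J$-invariant real $2$-plane, so the hypothesis $-B\le \textup{sectional curvature}\le -A$ immediately forces $-B\le H(\omega)\le -A$. Hence $(M,\omega)$ satisfies the hypotheses of both Theorem~\ref{th:WY-KR} and Theorem~\ref{th:WYc1} (with $\kappa_1=A$, $\kappa_2=B$).

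Next, apply these two pinched-holomorphic-curvature theorems. Theorem~\ref{th:WYc1} yields the unique complete K\"ahler--Einstein metric $\omega_{\textup{KE}}$ with $\Ric(\omega_{\textup{KE}})=-\omega_{\textup{KE}}$ satisfying $C^{-1}\omega\le\omega_{\textup{KE}}\le C\omega$, for a constant $C$ depending only on $\dim M$, $A$, $B$ (in fact with bounded geometry, which we do not need here). Theorem~\ref{th:WY-KR} yields $C^{-1}|\xi|_\omega\le\mathfrak{K}(x,\xi)\le C|\xi|_\omega$ for all $x\in M$, $\xi\in T'_xM$; that is, the Kobayashi--Royden infinitesimal metric is uniformly comparable to the Hermitian norm $|\cdot|_\omega$ as length functions on $T'M$, with constant depending only on $\dim M$, $A$, $B$. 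In particular $\mathfrak{K}$ is a genuine continuous metric on $M$.

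For the Bergman metric I would invoke the simply-connectedness. Theorem~\ref{th:GrWuH} of Greene--Wu gives the existence of a complete Bergman metric $\omega_{\mathfrak{B}}$ together with the lower bound $\omega_{\mathfrak{B}}\ge C\omega$, while Theorem~\ref{th:WY-B} supplies the complementary upper bound $\omega_{\mathfrak{B}}\le C_1\omega$ (and bounded geometry); combining these, $C^{-1}\omega\le\omega_{\mathfrak{B}}\le C_1\omega$ with constants depending only on $\dim M$, $A$, $B$. Thus each of $\omega_{\textup{KE}}$, $\omega_{\mathfrak{B}}$, $\mathfrak{K}$ is uniformly equivalent to the single fixed background metric $\omega$, with equivalence constants of the asserted dependence; by transitivity, any two of them are uniformly equivalent to each other, the new constants being products and quotients of the previous ones and hence again depending only on $A$, $B$, $\dim M$. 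I do not anticipate any real obstacle: the entire content lies in the three theorems already proved, and the only point needing a line of care is to phrase the Kobayashi--Royden comparison as a uniform two-sided bound for the infinitesimal Finsler pseudometric on $T'M$ against $|\cdot|_\omega$, rather than as a comparison of smooth $(1,1)$-forms.
\end{spf}
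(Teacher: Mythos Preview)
Your proposal is correct and is precisely the assembly the paper has in mind: the corollary is stated without a separate proof, as an immediate consequence of Theorems~\ref{th:WY-KR}, \ref{th:WYc1}, \ref{th:GrWuH}, and~\ref{th:WY-B}, using that the sectional curvature pinching implies the holomorphic sectional curvature pinching. Your remark about phrasing the Kobayashi--Royden comparison as a two-sided bound on the infinitesimal Finsler pseudometric (rather than on $(1,1)$-forms) is exactly the right caveat and matches the paper's convention for ``uniform equivalence''.
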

Corollary~\ref{co:WY-3} in particular implies that the smoothly bounded weakly pseudoconvex domain~$\Omega$ constructed in \cite{Diederich-Fornaess-Herbort:1984} and \cite[p. 491]{Jarnicki-Pflug:2013}, given by
\[
   \Omega = \{(z_1, z_2, z_3) \in \mathbb{C}^3 ; \Rea z_1 + |z_1|^2 + |z_2|^{12} + |z_3|^{12} + |z_2|^4 |z_3|^2 + |z_2|^2 |z_3|^6 < 0 \},
\]
cannot admit a complete K\"ahler metric with negative pinched sectional curvature.

In this paper we provide a unifying treatment for the invariant metrics, through developing the techniques of effective quasi-bounded geometry. The quasi-bounded geometry was originally introduced to solve the Monge-Amp\`ere equation on the complete noncompact manifold with injectivity radius zero. By contrast to solving equations, the holomorphicity of quasi-coordinate map is essential for our applications to invariant metrics. It is crucial to show the radius of quasi-bounded geometry depends only on the curvature bounds. Then, a key ingredient is the \emph{pointwise interior estimate}. Several arguments, such as Lemma~\ref{le:WYdelta1}, Lemma~\ref{le:mpcIII}, Lemma~\ref{le:WY-KR}, and Corollary~\ref{co:WintdK}, may have interests of their own.

\noindent \textbf{Notation and Convention.}
We interchangeably denote a hermitian metric by tensor $g_{\omega} = \sum_{i,j} g_{i \bar{j}} dz^{i} \otimes d \bar{z}^{j}$ and its K\"ahler form $\omega = (\sqrt{-1} /2) \sum_{i,j} g_{i\bar{j}} dz^i \wedge d\bar{z}^j$. The curvature tensor $R_{\textup{m}} = \{R_{i\bar{j}k\bar{l}}\}$ of $\omega$ is given by
\[
   R_{i\bar{j}k\bar{l}} = R\Big(\frac{\p}{\p z^i}, \frac{\p}{\p \bar{z}^j}, \frac{\p}{ \p z^k}, \frac{\p}{ \p \bar{z}^l} \Big) = - \frac{\partial^2 g_{i\bar{j}}}{\partial z^k \partial \bar{z}^l}  + \sum_{p,q = 1}^n g^{p\bar{q}} \frac{\partial g_{i\bar{q}}}{\partial z^k}  \frac{\partial g_{p \bar{j}}}{\partial \bar{z}^l}. 
\]
Let $x$ be a point in $M$ and $\eta \in T'_x M$ be a unit holomorphic tangent vector at $x$. Then, holomorphic (sectional) curvature of $\omega$ at $x$ in the direction $\eta$ is
\[
   H(\omega, x, \eta) = H(x, \eta) = R(\eta, \overline{\eta}, \eta, \overline{\eta}) = \sum_{i,j,k,l} R_{i\bar{j}k\bar{l}} \eta^i \bar{\eta}^j \eta^k \bar{\eta}^l.
\] 
We abbreviate $H(\omega) \le \kappa$ (resp. $H(\omega) \ge \kappa$) for some constant $\kappa$, if $H(\omega, x, \eta) \le \kappa$ (resp. $H(\omega, x, \eta) \ge \kappa$) at every point $x$ of $M$ and for each $\eta \in T'_x M$. 
We denote
\[
   dd^c \log \omega^n = dd^c \log \det (g_{i\bar{j}}) = \frac{\sqrt{-1}}{2} \p \bar{\p} \log \det (g_{i\bar{j}}) = - \textup{Ric}(\omega)
\]
where $d^c = \sqrt{-1} (\bar{\p} - \p) / 4$.

We say that two pseudometrics $L_1$ and $L_2$ are \emph{uniformly equivalent} or \emph{quasi-isometric} on a complex manifold $M$, if there exists a constant $C>0$ such that 
\[
    C^{-1} L_1(x, \xi) \le L_2(x, \xi) \le C L_1(x, \xi) \quad \textup{for all $x \in M$, $\xi \in T'_x M$},
\]
which is often abbreviated as $C^{-1} L_1 \le L_2 \le C L_1$ on $M$.

In many estimates, we give quite explicit constants mainly to indicate their dependence on the parameters such as  $\dim M$ and the curvature bounds.

\section{Effective quasi-bounded geometry} \label{se:qbg}

The notions of \emph{bounded geometry} and \emph{quasi-bounded geometry} are introduced by the second author and S. Y. Cheng, originally to adapt the Schauder type estimates to solve the Monge-Amp\`ere type equation on complete noncompact manifolds (see, for example, \cite{Yau:1978:Ast}, \cite{Cheng-Yau:1980, Cheng-Yau:1986}, \cite{Tian-Yau:1987, Tian-Yau:1990, Tian-Yau:1991} and \cite[Appendix]{Wang-Lin:1997}).

We use the following the formulation (compare \cite[p. 580]{Tian-Yau:1990} for example). Let $(M, \omega)$ be an $n$-dimensional complete K\"ahler manifold. For a point $P \in M$, let $B_{\omega}(P; \rho)$ be the open geodesic ball centered at $P$ in $M$ of radius $\rho$; sometimes we omit the subscript $\omega$ when there is no confusion. Denote by $B_{\mathbb{C}^n}(0; r)$ the open ball centered at the origin in $\mathbb{C}^n$ of radius $r$ with respect to the standard metric $\omega_{\mathbb{C}^n}$.
\begin{defn} \label{de:qbg}
An $n$-dimensional K\"ahler manifold $(M, \omega)$ is said to have \emph{quasi-bounded geometry}, if there exist two constants $r_2 > r_1 > 0$, 
such that for each point $P$ of $M$, there is a domain $U$ in $\mathbb{C}^n$ and a nonsingular holomorphic map $\psi: U \to M$ satisfying the following properties
\begin{enumeratei}
\item $B_{\mathbb{C}^n}(0; r_1) \subset U \subset B_{\mathbb{C}^n}(0; r_2)$ and $\psi(0) = P$;
\item there exists a constant $C> 0$ depending only on $r_1, r_2, n$ such that 
\begin{equation} \label{eq:simEu}
    C^{-1} \omega_{\mathbb{C}^n} \le \psi^*\omega \le C \omega_{\mathbb{C}^n} \quad \textup{on $U$};
\end{equation}
\item for each integer $l \ge 0$, there exists a constant $A_l$ depending only on $l$, $n, r_1, r_2$ such that 
\begin{equation} \label{eq:bddgk}
   \sup_{x \in U} \bigg| \frac{\p^{|\nu| + |\mu|} g_{i\bar{j}}}{\p v^{\mu} \p \bar{v}^{\nu}} (x) \bigg| \le A_l, \quad \textup{for all $|\mu| + |\nu| \le l$}
\end{equation}
where 
$g_{i\bar{j}}$ is the component of $\psi^*\omega$ on $U$ in terms of the natural coordinates $(v^1,\ldots, v^n)$, and $\mu, \nu$ are the multiple indices with $|\mu| = \mu_1 + \cdots + \mu_n$. 
\end{enumeratei}
The map $\psi$ is called a \emph{quasi-coordinate map} and the pair $(U, \psi)$ is called a \emph{quasi-coordinate chart} of $M$. We call the positive number $r_1$ a \emph{radius of quasi-bounded geometry}. The K\"ahler manifold $(M, \omega)$ is of \emph{bounded geometry} if in addition each $\psi:U \to M$ is biholomorphic onto its image. In this case, the number $r_1$ is called \emph{radius of bounded geometry}.
\end{defn}

The following theorem is fundamental on constructing the quasi-coordinate charts. 
\begin{theorem} \label{th:WYqc}
Let $(M, \omega)$ be a complete K\"ahler manifold. 
\begin{enumerate}
\item \label{it:WYqc:1} The manifold $(M, \omega)$ has quasi-bounded geometry if and only if  for each integer $q \ge 0$, there exists a constant $C_q >0$ such that
\begin{equation} \label{eq:bddRm}
   \sup_{P \in M} |\nabla^q R_{\textup{m}}| \le C_q,
\end{equation}
where $R_{\textup{m}} = \{ R_{i\bar{j}k\bar{l}}\}$ denotes the curvature tensor of $\omega$. In this case, the radius of quasi-bounded geometry depends only on $C_0$ and $\dim M$.
\item If $(M, \omega)$ has positive injectivity radius and the curvature tensor $R_{\textup{m}}$ of $\omega$ satisfies \eqref{eq:bddRm}, then $(M, \omega)$ has bounded geometry. The radius of bounded geometry depends only on $C_0$, $\dim M$, and also the injectivity radius $r_{\omega}$ of $\omega$ unless $r_{\omega}$ is infinity.
\end{enumerate}
\end{theorem}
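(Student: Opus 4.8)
The plan is to establish Theorem~\ref{th:WYqc} by reducing quasi-bounded geometry to a statement about the exponential-type map in holomorphic coordinates, and then controlling that map using the bounded curvature hypothesis~\eqref{eq:bddRm}. For the ``only if'' direction of part~(1), one observes that if $(M,\omega)$ has quasi-bounded geometry, then on each quasi-coordinate chart $(U,\psi)$ the pulled-back metric $\psi^*\omega$ has $C^k$-bounds on all its coefficients by~\eqref{eq:bddgk}, together with the two-sided bound~\eqref{eq:simEu}; since the curvature tensor and its covariant derivatives are universal polynomial expressions in the $g_{i\bar j}$, the inverse matrix $g^{i\bar j}$, and the coordinate derivatives of $g_{i\bar j}$, the estimate~\eqref{eq:simEu} gives a lower bound on $\det(g_{i\bar j})$ hence an upper bound on $g^{i\bar j}$, and then~\eqref{eq:bddgk} yields $|\nabla^q R_{\textup m}|\le C_q$ at $P=\psi(0)$ with $C_q$ depending only on $n,r_1,r_2$. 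Since $P$ is arbitrary this gives~\eqref{eq:bddRm}.

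The substance is the ``if'' direction. First I would fix $P\in M$ and pass to a holomorphic normal (or $K$-normal) coordinate system centered at $P$, say obtained from the complexified exponential map or from Bochner coordinates, so that $g_{i\bar j}(0)=\delta_{ij}$ and the first derivatives vanish. The goal is to show that on a ball $B_{\mathbb C^n}(0;r_1)$ of a \emph{definite} radius $r_1=r_1(C_0,n)$ the coordinate is defined, nonsingular, and satisfies~\eqref{eq:simEu} and~\eqref{eq:bddgk} with constants depending only on $C_0,\dots,C_q,n$. The mechanism is a system of ODEs/PDEs: in such coordinates the metric coefficients satisfy an elliptic equation whose inhomogeneous terms are built from the curvature tensor (schematically $\Delta g_{i\bar j}\sim R_{i\bar j k\bar l}g^{k\bar l}+(\partial g)^2$), so a bootstrap using interior Schauder estimates upgrades a $C^0$ bound on $g$ to $C^k$ bounds for every $k$, \emph{provided} one first has the two-sided bound~\eqref{eq:simEu} on a fixed ball. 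That two-sided bound on a fixed ball is obtained by a Jacobi-field / comparison argument: bounded $|R_{\textup m}|\le C_0$ controls how fast the metric in normal coordinates can deviate from the Euclidean one, giving~\eqref{eq:simEu} on $B(0;r_1)$ with $r_1$ depending only on $C_0$ and $n$. Nonsingularity of $\psi$ on that ball follows from~\eqref{eq:simEu}. For part~(2), the additional hypothesis of positive injectivity radius $r_\omega$ lets one choose $r_1\le$ (a fixed fraction of) $r_\omega$, on which the exponential map is injective, so each $\psi$ is biholomorphic onto its image and one gets genuine bounded geometry; the radius then depends on $C_0$, $n$, and $r_\omega$ (and is independent of $r_\omega$ when $r_\omega=\infty$, since then only $C_0,n$ enter).

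The main obstacle, and the point requiring the most care, is the claim that the radius $r_1$ depends \emph{only} on $C_0$ and $\dim M$, not on higher derivatives of curvature and not on the injectivity radius. Getting a curvature-only bound on the radius is exactly what makes the quasi-coordinates (rather than honest coordinates) necessary: without an injectivity radius bound one cannot use $\exp_P$ directly as a chart, so one must work with a local holomorphic chart coming from, e.g., the universal cover or from solving $\bar\partial$ locally, and show the resulting map stays nonsingular on a uniform ball using only $\|R_{\textup m}\|_{C^0}$. The delicate step is running the elliptic bootstrap while keeping track that the \emph{starting} radius on which~\eqref{eq:simEu} holds is controlled by $C_0$ alone; once~\eqref{eq:simEu} is in hand on a $C_0$-ball, the higher $C^k$ estimates~\eqref{eq:bddgk} can safely depend on $C_1,\dots,C_l$ without affecting $r_1$. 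I would organize the write-up so that the comparison-geometry input (yielding the fixed-radius two-sided metric bound from $C_0$) is isolated as its own lemma, and the Schauder bootstrap (yielding~\eqref{eq:bddgk} from~\eqref{eq:bddRm} and~\eqref{eq:simEu}) is a second, essentially standard, lemma.
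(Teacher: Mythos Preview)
Your ``only if'' direction and the Schauder bootstrap for the higher-order estimates~\eqref{eq:bddgk} match the paper and are fine. The gap is in the heart of the ``if'' direction: you write ``pass to a holomorphic normal (or $K$-normal) coordinate system \ldots obtained from the complexified exponential map or from Bochner coordinates,'' and then propose to run Jacobi-field comparison in those coordinates. But this inverts the logic, and it hides exactly the step that carries all the content.

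There is no ``complexified exponential map''; the Riemannian exponential $\exp_P$ gives \emph{real} geodesic normal coordinates $x^1,\dots,x^{2n}$, and the functions $w^j=x^j+\sqrt{-1}\,x^{n+j}$ are \emph{not} holomorphic on a Kähler manifold. Bochner (holomorphic normal) coordinates exist at each point, but their radius of definition is governed a priori by the convergence of a power-series change of variables, not by $C_0$ alone, so you cannot simply assume a uniform ball. Likewise, Jacobi-field/Rauch comparison is a statement about geodesic normal coordinates; it does not directly control the metric in a holomorphic chart you have not yet constructed.

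The paper's argument runs in the opposite order. One works on $B(0;R)\subset T_{\mathbb R,P}M$ with $R=\pi/(2\sqrt{C_0})$, where $\exp_P$ is a local diffeomorphism (Rauch), pulls back the Kähler structure, and does the comparison geometry \emph{there} to obtain uniform bounds on $(g_{ij})$, $\partial g_{ij}/\partial x^k$, and crucially the Siu--Yau inequality $|\bar\partial w^j|_g\le C(n,C_0)\,r^2$ showing the $w^j$ are approximately holomorphic. Only then does one solve $\bar\partial\beta^j=\bar\partial[(\chi\circ r)w^j]$ by Hörmander's $L^2$ estimate with a singular weight, and---this is the paper's Lemma~\ref{le:WYdelta1}, the key new ingredient---upgrade the $L^2$ bound on $\beta^j$ to a \emph{pointwise} $C^1$ bound via interior elliptic estimates for $\Delta_g\beta^j=\Delta_g w^j$. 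That $C^1$ smallness of $\beta^j$ is what guarantees $v^j=w^j-\beta^j$ form a holomorphic coordinate system on a ball $B(0;\delta_1)$ with $\delta_1=\delta_1(n,C_0)$; the quasi-coordinate map is then $\exp_P\circ\mathbf v^{-1}$. Your outline mentions ``solving $\bar\partial$ locally'' in passing, but without the Siu--Yau almost-holomorphicity estimate and the $L^2\!\to\!C^1$ step of Lemma~\ref{le:WYdelta1} you have no mechanism to produce holomorphic coordinates on a $C_0$-controlled ball, and the rest of the argument cannot begin.
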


Part \eqref{it:WYqc:1} in Theorem~\ref{th:WYqc} is especially useful for a K\"ahler manifold with injectivity radius zero, such as the quasi-projective manifold with positive logarithmic canonical bundle (compare, for example, \cite{Yau:1978:Ast}, \cite{Tian-Yau:1987}, \cite{Wu:2008}, \cite{Guenancia-Wu:2016}).

A subtlety in the proof of Theorem~\ref{th:WYqc} is as below. It is known that one can pullback a K\"ahler structure on the tangent space at a given point via the exponential map; by applying the $L^2$ estimate of $\bar{\p}$-operator to the geodesic normal coordinates, one obtains the holomorphic coordinates in a geodesic ball of radius $r$. The subtlety is to show that the radius $r$ is independent of the given point. We have to provide a full proof for this subtlety, as it is crucial for our applications.

The proof of Theorem~\ref{th:WYqc} requires the following Lemma~\ref{le:WYdelta1}. Its weak version, Lemma~\ref{le:WYnghd}, reformulates some classical results in the literature (see, for example, \cite[pp. 247--248]{Siu-Yau:1977}, \cite[pp. 160--161]{Greene-Wu:1979}, and \cite[p. 582]{Tian-Yau:1990}) into a form, from which we can proceed further. The strong or effective version, Lemma~\ref{le:WYdelta1}, is the form we need. Lemma~\ref{le:WYdelta1} strengthens Lemma~\ref{le:WYnghd} in that the desired holomorphic coordinates are defined in geodesic balls whose radii have uniform lower bound.

\begin{lemma} \label{le:WYnghd}
 Let $(N^n, g)$ be an $n$-dimensional K\"ahler manifold, and let $B(P; \delta_0)$ be an open geodesic ball of radius $\delta_0$ centered at a point $P$ in $N$. Suppose that $B(P; \delta_0)$ is contained in a coordinate chart in $N$ with smooth, real-valued, coordinate functions $\{x^1,\ldots, x^{n}, x^{n+1}, \ldots, x^{2n} \}$.
 Assume that the following conditions hold, where each $A_j$ denotes a positive constant.
 \begin{enumeratei}
 \item \label{it:NcutP} No cut point of $P$ is contained in $B(P; \delta_0)$.
 \item \label{it:Kgb} The sectional curvature $K(g)$ of $g$ satisfies $- A_2 \le K(g) \le A_1$ on  $B(P; \delta_0)$. 
 \item \label{it:SYdbar}
 For each $j = 1,\ldots,n$,
 \[
     |\bar{\p} (x^j + \sqrt{-1}\, x^{n+j}) |_{g} (Q) \le \phi(r), \quad \textup{for all $Q \in B(P; \delta_0)$}.
 \]
 Here $r = r(Q)$ denotes the geodesic distance $d(P, Q)$, and $\phi \ge 0$ is a continuous function on $[0, +\infty)$ satisfying
 \begin{equation} \label{eq:intp2t3}
    \int_0^{1/2} \frac{\phi^2(t)}{t^3} dt < + \infty.   
 \end{equation}
 \end{enumeratei}
 Then, 
 there exists a system of holomorphic coordinates $\{v^1,\ldots, v^n\}$ defined on a smaller geodesic ball $B(P; \delta_1)$ such that 
 \begin{equation} \label{eq:vdvP}
    v^j = x^j + \sqrt{-1} \, x^{n+j}, \quad d v^j = d (x^j + \sqrt{-1} \, x^{n+j}), \quad \textup{at $P$}
\end{equation}
for all $j = 1, \ldots, n$.
\end{lemma}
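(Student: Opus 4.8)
\emph{Proof proposal.}
The plan is to obtain $v^j$ by correcting the approximately holomorphic functions $w^j := x^j + \sqrt{-1}\,x^{n+j}$ with a solution of a $\bar\partial$-equation that vanishes to second order at $P$. Fix a holomorphic coordinate chart around $P$ sending $P$ to the origin, and let $\Omega = B_{\mathbb{C}^n}(0;\rho_0)$ be a Euclidean ball contained in its image, with standard coordinate $\zeta$; after shrinking $\rho_0$ we may assume that the preimage $\widetilde\Omega$ of $\overline{\Omega}$ lies in $B(P;\delta_0)$. Transport the $w^j$ to smooth, generally non-holomorphic, functions on $\overline\Omega$, still denoted $w^j$. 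On $\Omega$ (a bounded pseudoconvex domain) I would solve $\bar\partial u^j = \bar\partial w^j$ with H\"ormander's $L^2$ estimate against the weight
\[
  \psi \;=\; (n+1)\log|\zeta|^2 + A|\zeta|^2 \qquad (A \gg 1),
\]
and set $v^j := w^j - u^j$. Then $\bar\partial v^j = 0$, so each $v^j$ is holomorphic, and it remains to check (a) solvability, (b) that $u^j$ and $du^j$ vanish at $P$, and (c) that the $v^j$ form a coordinate system near $P$.

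For (a): the weight $\psi$ is strictly plurisubharmonic (the term $A|\zeta|^2$ serving only to make $i\partial\bar\partial\psi$ positive), so H\"ormander's theorem gives $u^j$ with $\bar\partial u^j = \bar\partial w^j$ and $\int_\Omega |u^j|^2 e^{-\psi}\,d\lambda < \infty$ as soon as $\int_\Omega |\bar\partial w^j|^2 e^{-\psi}\,d\lambda < \infty$. Since $\Omega$ is bounded, $e^{-\psi}$ is bounded away from $\zeta = 0$, so only integrability near the origin is in question; there $e^{-\psi} \asymp |\zeta|^{-2(n+1)}$, $d\lambda \asymp r^{2n-1}\,dr\,d\sigma$ in polar coordinates, and, using \ref{it:Kgb} and \ref{it:SYdbar} together with $|\zeta| \asymp d(P,\cdot) = r$ on $\widetilde\Omega$, $|\bar\partial w^j| \lesssim \phi(r)$; hence the integral is dominated by a constant times $\int_0^{1/2}\phi(t)^2\,t^{-3}\,dt$, finite by \eqref{eq:intp2t3}. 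Elliptic regularity makes $u^j$ smooth.

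For (b): the same estimate gives $\int_{\{|\zeta|<\epsilon\}} |u^j|^2 |\zeta|^{-2(n+1)}\,d\lambda < \infty$, i.e. $\int_0^\epsilon M(r)\,r^{-3}\,dr < \infty$, where $M(r)$ is the spherical mean of $|u^j|^2$ over $\{|\zeta|=r\}$; Taylor expanding the smooth function $u^j$ at the origin gives $M(r) = |u^j(0)|^2 + c_n|\nabla u^j(0)|^2\,r^2 + o(r^2)$, so finiteness forces $u^j(0)=0$ and $\nabla u^j(0)=0$. Hence $v^j(P)=w^j(P)$ and $dv^j(P)=dw^j(P)$, which is \eqref{eq:vdvP}. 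For (c): since $\phi(0)=0$ (already forced by \eqref{eq:intp2t3}), hypothesis \ref{it:SYdbar} gives $\bar\partial w^j|_P = 0$, so $dw^j|_P = \partial w^j|_P$ is of type $(1,0)$; as $\{dx^i\}_{i=1}^{2n}$ is a coframe, $dw^1\wedge\cdots\wedge dw^n\ne0$ at $P$, hence $dv^1\wedge\cdots\wedge dv^n\ne0$ at $P$, and by the holomorphic inverse function theorem $\{v^1,\dots,v^n\}$ is a holomorphic coordinate system on a neighborhood of $P$, so on a geodesic ball $B(P;\delta_1)$ for some $\delta_1>0$.

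The main obstacle is the construction of the weight $\psi$: its pole at $P$ must have order exactly $n+1$ — large enough that $u^j\in L^2(e^{-\psi})$ forces second-order vanishing, yet small enough that \eqref{eq:intp2t3} still guarantees $\bar\partial w^j\in L^2(e^{-\psi})$; the two requirements meet precisely at $n+1$. A related point of care is that one must work in a genuine holomorphic chart, so that $\log|\zeta|^2$ is plurisubharmonic and admissible in H\"ormander's theorem, rather than with the merely approximately holomorphic $w^j$. In this weak form the hypotheses \ref{it:NcutP} and \ref{it:Kgb} enter only through the comparison of $g$ with the Euclidean metric on $\widetilde\Omega$ (Rauch comparison), legitimizing $|\zeta|\asymp r$ and the volume comparison; their real force, together with the demand that $\delta_1$ be bounded below independently of $P$, is what the effective version, Lemma~\ref{le:WYdelta1}, addresses, by replacing the abstract holomorphic chart with an explicit complete K\"ahler metric on a fixed fraction of $B(P;\delta_0)$.
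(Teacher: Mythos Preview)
Your argument is correct and shares the paper's core idea --- solve a weighted $\bar\partial$-equation with a logarithmic pole of order $n+1$ at $P$ so that the correction vanishes to second order --- but the implementation is genuinely different. The paper works \emph{intrinsically} on the geodesic ball: it uses the Hessian comparison theorem (this is where hypotheses \ref{it:NcutP} and \ref{it:Kgb} enter) to show $dd^c r^2 \ge c\,\omega_g$ and $dd^c\log(1+r^2)\ge c'\omega_g$, so that $B(P;\delta)$ is Stein and the weight $\varphi = l\log(1+r^2) + (2n+2)\log r$ built from the \emph{geodesic distance} $r$ is admissible for the Siu--Yau $L^2$ estimate; a cutoff $\chi(r)$ localizes the data. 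You instead pass to an auxiliary holomorphic chart and use the Euclidean weight $(n+1)\log|\zeta|^2 + A|\zeta|^2$ on a ball in $\mathbb{C}^n$, which is automatically pseudoconvex with a manifestly psh weight; hypotheses \ref{it:NcutP}--\ref{it:Kgb} are then only used, as you note, to compare $|\zeta|$, $r$, the two metrics, and the two volume forms on a fixed compact set. Your route is shorter and more elementary for this non-effective statement. The price, which you correctly identify, is that the size of your auxiliary chart is uncontrolled, so your argument does not adapt to Lemma~\ref{le:WYdelta1}; the paper's intrinsic setup, with every constant expressed through $\delta_0$, $n$, and the $A_j$, is exactly what allows the subsequent elliptic estimates on $\beta^j$ to produce a uniform lower bound on $\delta_1$.
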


\begin{proof}
Let $h$ be a real-valued smooth function on $[0, +\infty)$ and let $\omega_g$ be the K\"ahler form of $g$. 
By conditions \eqref{it:NcutP} and \eqref{it:Kgb}, we apply the Hessian Comparison Theorem (see, for example, \cite[p. 231]{Siu-Yau:1977} and \cite[p. 4, Theorem 1.1]{Schoen-Yau:1994})
to $h(r)$ to obtain
\[
   4 dd^c h(r) \ge  \min\big\{ 2h'(r) \sqrt{A_1} \cot (\sqrt{A_1}\, r), h'(r) \sqrt{A_1} \cot (\sqrt{A_1} \, r) + h''(r) \big\} \, \omega_g,
\]
for all $x \in B(P; \delta_0)$, where $r = r(x) = d(x, P)$. Letting $h(r)$ be $r^2$ and $\log (1 + r^2)$, respectively, yields
\begin{align}
   dd^c r^2 & \ge \frac{\pi}{4} \omega_g, \label{eq:ddcr2}\\
   dd^c \log (1 + r^2) & \ge \frac{4 \pi}{17} \omega_g, \notag
\end{align}
for all $x \in B(P; \delta)$, where $\delta$ is a constant satisfying
\begin{equation} \label{eq:delta04}
   0 < \delta \le \min\Big\{\delta_0, \frac{1}{4}, \frac{\pi}{4\sqrt{A_1}} \Big\}.
\end{equation}
Inequality \eqref{eq:ddcr2} in particular implies that $B(P; \delta)$ is a Stein manifold.
On the other hand, by \eqref{it:Kgb}, the Ricci curvature $\textup{Ric}(\omega_g)$ of $g$ satisfies
\[
    |\textup{Ric}(\omega_g)|_g \le \sqrt{n} |R_{\textup{m}}|_g \le \frac{34}{3} n^{3/2} (A_1 + A_2).
\]
Pick a constant $l>0$ such that
\[
    \frac{4 \pi}{17} l \ge \frac{34}{3}n^{3/2} (A_1 + A_2) + 1,
\]
and let
\[
   \varphi_1 = l \log (1 + r^2), \quad \varphi_2 = (2n + 2) \log r, \quad \varphi = \varphi_1 + \varphi_2.
\]
Then, 
\[
   dd^c \varphi_1 + \textup{Ric}(\omega_g) \ge \omega_g \quad \textup{on $B(P; \delta)$}.
\]

Let $0 \le \chi \le 1$ be a smooth function on $\mathbb{R}$ such that $\chi \equiv 1$ on the closed interval $[0, \delta/6]$ and $\chi \equiv 0$ on $[\delta/3,  +\infty)$.  
Let
\[
   w^j = x^j + \sqrt{-1} x^{n+j}, \quad j = 1, \ldots, n.
\]
It follows from \cite[Proposition 2.1, p. 244]{Siu-Yau:1977} (see also \cite[Lemma 4 and Remark, p. 208]{Mok-Siu-Yau:1981} for Stein K\"ahler manifolds) that there is a smooth function $\beta^j$ on $B(P; \delta)$ such that
\begin{equation} \label{eq:dbeta}
   \bar{\p} \beta^j = \bar{\p} [ (\chi \circ r) w^j] \quad \textup{on $B(P; \delta)$}
\end{equation}
and satisfies
\begin{equation} \label{eq:SYGWTY}
   \int_{B(P;\delta)} |\beta^j|^2 e^{-\varphi} dV_g \le \int_{B(P;\delta)} | \bar{\p} ((\chi \circ r) w^j) |_g^2 e^{-\varphi} dV_g, \quad j = 1 \ldots, n.
\end{equation}
By condition \eqref{it:SYdbar}, 
\begin{align}
   \int_{B(P; \delta/6)} | \bar{\p} ((\chi \circ r) w^j) |_g^2 e^{-\varphi} dV_g 
   & = \int_{B(P; \delta/6)} |\bar{\p} w^j|_g^2 e^{-\varphi} dV_g \notag \\
   & \le C(n, A_1, A_2) \int_0^{\delta / 6} \frac{\phi^2(r)}{r^3} d r \label{eq:L2pwj} \\
   & \le C(n, A_1, A_2) < +\infty, \notag
\end{align}
where we use the standard volume comparison $dV_g \le C(n, A_1, A_2) r^{2n-1} dr dV_{\mathbb{S}^{2n-1}}$ for $r \le \delta \le 1/4$, and $C(n, A_1, A_2) > 0$ denotes a generic constant depending only on $n$, $A_1, A_2$. This together with \eqref{eq:SYGWTY} imply
\[
   \beta^j = 0, \quad d \beta^j = 0 \quad \textup{at $P$}.
\]
Let
\begin{equation} \label{eq:vjwj}
   v^j = (\chi \circ r) w^j - \beta^j, \quad j = 1, \ldots, n.
\end{equation}
Then $v^j$ is holomorphic and satisfies \eqref{eq:vdvP} for each $j$. By the inverse function theorem, the set of functions $\{v^1,\ldots, v^n\}$ forms a holomorphic coordinate system in a smaller ball $B(P; \delta_1)$ where $0 < \delta_1 < \frac{1}{6}\min\{\,\delta_0, 1/4, \pi / (4 \sqrt{A_1}\,)\}$.
\end{proof}
\begin{remark}
Condition \eqref{eq:vdvP} in particular includes two cases, $\phi(t) = t^{1 + a}$  with constant $a > 0$, and $\phi(t) = t^k (-\log t)^{-l}$ with $k, l \ge 1$.  The former is sufficient for our current application. For clarity, we specify $\phi(t) = t^{1+a}$ in the lemma below.
\end{remark}
\begin{lemma} \label{le:WYdelta1}
Let $(N^n, g)$ and $B(P; \delta_0)$ be given as in Lemma~\ref{le:WYnghd}, satisfying conditions \eqref{it:NcutP}, \eqref{it:Kgb}, and \eqref{it:SYdbar} with $\phi (r) = A_3 r^{1+\sigma}$ for some constant $\sigma>0$. Assume, in addition, that the metric component $\{g_{ij}\}$ of $g$ with respect to $\{x^1,\ldots, x^{2n}\}$ satisfies 
 \begin{align}
     A_4^{-1} (\delta_{ij}) \le (g_{ij})(Q) & \le A_4 (\delta_{ij}), & & 1 \le i, j \le 2n, \label{eq:ugijQP}\\
     \Big| \frac{\p g_{ij} }{\p x^k} (Q) \Big| & \le A_5, && 1 \le i, j, k \le 2n, \label{eq:udgijQ}
 \end{align} 
 for all $Q \in B(P; \delta_0)$. Then, there is a holomorphic coordinate system $\{v^1,\ldots, v^n\}$ defined on a smaller geodesic ball $B(P; \delta_1)$, for which
 \begin{enumerate}[\upshape(a)]
  \item \label{it:WYdelta1:b} the radius $\delta_1$ depends only on $\delta_0$, $n$, $A_j$, $1 \le j \le 5$, and also $\sigma$ if $\sigma<1$;
   \item \label{it:WYdelta1:a} the coordinate function $v^j$ satisfies \eqref{eq:vdvP} and
 \begin{align}
    | v^j - w^j | & \le \frac{1}{2} r^{1 + \frac{\sigma_1}{2}}, \label{eq:bdd|vj|} \\
    \Big | \frac{\p v^i}{\p w^j} - \delta_{ij} \Big| & \le \frac{1}{2} r^{\frac{\sigma_1}{2}}, \quad \Big| \frac{\p v^i}{\p \overline{w}^j} \Big| \le \frac{1}{2} r^{\frac{\sigma_1}{2}}, \label{eq:pv/pw}
 \end{align}
 on $B(P; \delta_1)$ for all $1 \le i, j \le n$, where $w^j = x^j + \sqrt{-1} x^{n+j}$, $r = r(Q) = d(P, Q)$, and $\sigma_1 \equiv \min\{\sigma, 1\}$.
  \end{enumerate}
\end{lemma}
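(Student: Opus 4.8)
The plan is to rerun the $L^2$-construction from the proof of Lemma~\ref{le:WYnghd} with the explicit weight coming from $\phi(r) = A_3 r^{1+\sigma}$, this time tracking every constant, and then to promote the qualitative conclusion ``$\beta^j$ and $d\beta^j$ vanish at $P$'' to the quantitative decay \eqref{eq:bdd|vj|}--\eqref{eq:pv/pw} by interior elliptic estimates. First I would fix $\delta$ as in \eqref{eq:delta04} (so $\delta$ depends only on $\delta_0$ and $A_1$), produce $\beta^j$ on $B(P;\delta)$ with $\bar{\p}\beta^j = \bar{\p}[(\chi\circ r)w^j]$ and the weighted $L^2$-bound \eqref{eq:SYGWTY}, and set $v^j = (\chi\circ r)w^j - \beta^j$, which is holomorphic on $B(P;\delta)$ and, exactly as in Lemma~\ref{le:WYnghd}, satisfies \eqref{eq:vdvP}. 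With $\phi(r) = A_3 r^{1+\sigma}$ the right-hand side of \eqref{eq:L2pwj} becomes an explicit finite multiple of $A_3^2 (\delta/6)^{2\sigma}/\sigma$; adding the elementary estimate for the transition annulus $\{\delta/6 \le r \le \delta/3\}$ (where $|\chi'| \le C/\delta$ and $|w^j| \lesssim \delta$) gives $\int_{B(P;\delta)} |\beta^j|^2 e^{-\varphi}\, dV_g \le C$ with $C$ depending only on $n$, $A_1, A_2, A_3$, $\delta_0$ (the $\sigma$-dependence entering only through $(\delta/6)^{2\sigma}/\sigma$, which stays bounded).

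Next I would restrict to $B(P;\delta/6)$, where $\chi\circ r \equiv 1$ and hence $h^j := v^j - w^j = -\beta^j$. Since $v^j$ is holomorphic it is $\Delta_g$-harmonic ($\Delta_g$ the Laplace--Beltrami operator of $g$), so $h^j$ solves $\Delta_g h^j = -\Delta_g w^j$ on $B(P;\delta/6)$; by \eqref{eq:ugijQP}--\eqref{eq:udgijQ} this operator is uniformly elliptic with Lipschitz leading coefficients and lower-order coefficients bounded in terms of $n, A_4, A_5$, and $|\Delta_g w^j| \le C(n, A_4, A_5)$ because $w^j$ is linear in the $x$-coordinates. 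On $B(P;\delta/6)$ the weight $e^{-\varphi}$ is bounded below by a positive constant depending only on $n, \delta, A_1, A_2$, so Step~1 yields $\|h^j\|_{L^2(B(P;\delta/6))} \le C$; De Giorgi--Nash--Moser (to pass from $L^2$ to $L^\infty$) followed by interior $W^{2,p}$ (Calder\'on--Zygmund) estimates and Sobolev embedding then give, for any fixed $\alpha \in (0,1)$, a bound $\|h^j\|_{C^{1,\alpha}(B(P;\delta/12))} \le C$ with $C$ depending only on $\alpha$, $n$, $A_1, \ldots, A_5$, and $\delta_0$.

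Finally I would invoke the vanishing $h^j(P) = 0$, $\nabla h^j(P) = 0$ --- forced, as in Lemma~\ref{le:WYnghd}, by finiteness of $\int |\beta^j|^2 e^{-\varphi}\, dV_g$ together with the $r^{-(2n+2)}$-singularity of the weight at $P$. Taylor expansion with the $C^{1,\alpha}$-control then gives $|h^j(Q)| \le C\, r^{1+\alpha}$ and $|\nabla h^j(Q)| \le C\, r^{\alpha}$ on $B(P;\delta/12)$, and since by \eqref{eq:ugijQP} the operators $\p/\p w^k$ and $\p/\p\bar{w}^k$ are comparable to ordinary $x$-derivatives, $|\p v^j/\p w^k - \delta_{jk}| = |\p h^j/\p w^k|$ and $|\p v^j/\p\bar{w}^k| = |\p h^j/\p\bar{w}^k|$ are both $\le C\, r^{\alpha}$. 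Choosing $\alpha \in (\sigma_1/2, 1)$ --- which can be taken independently of $\sigma$ when $\sigma \ge 1$ (e.g.\ $\alpha = 3/4$) but must depend on $\sigma$ when $\sigma < 1$ --- and then taking $\delta_1 \le \delta/12$ small enough, in terms of $C$ and $\sigma_1$, that $C r^{1+\alpha} \le \tfrac12 r^{1+\sigma_1/2}$ and $C r^{\alpha} \le \tfrac12 r^{\sigma_1/2}$ for $r \le \delta_1$, I obtain \eqref{eq:bdd|vj|}--\eqref{eq:pv/pw}; shrinking $\delta_1$ once more so that the $C^1$-smallness of $v^j - w^j$ together with \eqref{eq:vdvP} makes $(v^1, \ldots, v^n)$ a diffeomorphism onto its image (quantitative inverse function theorem) completes \ref{it:WYdelta1:b} and \ref{it:WYdelta1:a}, with $\delta_1$ depending only on $\delta_0$, $n$, $A_1, \ldots, A_5$ and, when $\sigma < 1$, on $\sigma$.

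The main obstacle is bookkeeping rather than a new idea: one must check that the interior elliptic estimates can be quoted with constants depending only on the ellipticity ratio $A_4$, the coefficient bound $A_5$, and the uniform radius $\delta$, and that the weighted-to-unweighted $L^2$ reduction on $B(P;\delta/6)$ keeps the constant uniform. The comparison of the a priori exponent $1+\alpha$ (which can be pushed arbitrarily close to $2$) with the target exponent $1+\sigma_1/2$ is exactly what isolates the case $\sigma < 1$ in \ref{it:WYdelta1:b}. Alternatively one could run a dyadic iteration on the annuli $\{2^{-k-1}\delta \le r \le 2^{-k}\delta\}$ to produce the pointwise decay directly, but the ``$C^{1,\alpha}$-at-$P$'' argument above is cleaner.
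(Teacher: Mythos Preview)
Your approach is correct and follows the same skeleton as the paper's: both observe that $\beta^j$ satisfies $\Delta_g\beta^j = \Delta_g w^j$ on $B(P;\delta/6)$ (since on a K\"ahler manifold $\Delta_g=-2\Box$), control the right-hand side via \eqref{eq:ugijQP}--\eqref{eq:udgijQ}, and bootstrap from the weighted $L^2$ bound by interior elliptic estimates. The paper quotes Gilbarg--Trudinger Theorems~8.17 and~8.32, tracking the explicit $\delta$-scaling $\|\beta^j\|_{L^2(B(P;\delta/6))}^2 \le C\sigma^{-1}\delta^{2n+2+2\sigma}$ through these to reach $|d\beta^j|_{C^0(B(P;\delta/24))} \le C\sigma_1^{-1}\delta^{\sigma_1}$; you instead take a uniform $C^{1,\alpha}$ bound (via $W^{2,p}$ and Sobolev, any $\alpha<1$) and combine it with the second-order vanishing $h^j(P)=\nabla h^j(P)=0$ to read off $|h^j|\le Cr^{1+\alpha}$ and $|\nabla h^j|\le Cr^{\alpha}$ directly by Taylor. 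Both routes land in the same place, and the final absorption $Cr^{1+\alpha}\le\tfrac12 r^{1+\sigma_1/2}$ by shrinking $\delta_1$ is exactly how the paper closes as well.

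One bookkeeping slip: the quantity $(\delta/6)^{2\sigma}/\sigma$ does \emph{not} stay bounded as $\sigma\to0^+$ (it behaves like $1/\sigma$), so your weighted $L^2$ constant---and hence your $C^{1,\alpha}$ constant---already carries a factor comparable to $1/\sigma$ when $\sigma<1$. This is harmless for the statement, since part~\ref{it:WYdelta1:b} allows $\delta_1$ to depend on $\sigma$ in that regime; the $\sigma$-dependence merely enters one step earlier than you indicate.
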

\begin{proof}
It remains to show \eqref{it:WYdelta1:b} and \eqref{eq:bdd|vj|}. 
We start from \eqref{eq:dbeta} to obtain
\[
    \bar{\p}^* \bar{\p} \beta^j = \bar{\p}^* \bar{\p} [ (\chi \circ r) w^j] \quad \textup{on $B(P; \delta)$}.
\]
where $\delta>0$ is a constant satisfying \eqref{eq:delta04} and $w^j = x^j + \sqrt{-1} x^{n+j}$.
Since $(N, g)$ is K\"ahler, the Laplace-Beltrami operator $\Delta_g$ is equal to the $\bar{\p}$-Laplacian $\Box = \bar{\p}^*\bar{\p} + \bar{\p} \bar{\p}^*$ up to a constant factor $(-2)$ (i.e., $\Delta_g = - 2\Box$\,). It follows that
\begin{equation} \label{eq:Lgbetaf}
   \Delta_g \beta^j = \Delta_g w^j \equiv f \quad \textup{on $B(P; \delta/6)$}.
\end{equation}
One can write
\[
   \Delta_g = \frac{1}{\sqrt{g}}\frac{\p}{\p x^a} \Big(g^{a b} \sqrt{g} \frac{\p }{\p x^b} \Big) 
\]
on $B(0; \delta)$ using the given single coordinate system $\{x^1,\ldots, x^n\}$, where the summation notation is used and $1 \le a, b \le 2n$. It follows that $\Delta_g$ is of the divergence form, and is uniformly elliptic by \eqref{eq:ugijQP}. 

Applying the standard interior estimate \cite[p. 210, Theorem 8.32]{Gilbarg-Trudinger:2001} to equation \eqref{eq:Lgbetaf} yields 
\[
   | d \beta^j |_{0; B(P; \delta/24)} \le C(n, A) \Big[ \delta^{-1} | \beta^j |_{0; B(P;\delta/12)} + \delta |f|_{0; B(P; \delta/12)} \Big].
\]
Here $|\cdot|_{0; U} \equiv |\cdot|_{C^0(U)}$ for a domain $U$,
and we denote by $C(n, A)$ a generic constant depending only on $n$ and $A_j$, $1 \le j \le 5$.

 To estimate the $C^0$-norm, we use the local maximum principle (\cite[Theorem 8.17, p. 194]{Gilbarg-Trudinger:2001} with $\nu = 0$) to get
\begin{equation} \label{eq:C0L2beta}
   | \beta^j |_{0; B(P; \delta/12)} \le C(n, A) \Big[ \delta^{-n} |\beta^j |_{L^2(B(P; \delta/6))} + \delta^2 |f|_{0; B(P; \delta/6)} \Big].
\end{equation}
Combining these two estimates yields
\[
   | d \beta^j |_{0; B(P; \delta/24)} 
   \le C(n, A) \Big[ \delta^{-n-1} |\beta^j |_{L^2(B(P; \delta/6))} + \delta |f|_{0; B(P;\delta/6)} \Big].
\]
To estimate the $L^2$-norm, we apply \eqref{eq:SYGWTY} and \eqref{eq:L2pwj} to obtain
\begin{align*}
   |\beta^j|^2_{L^2 (B(P; \delta/6))}
   & = \int_{B(P; \delta/6)} |\beta^j|^2 e^{-\varphi} e^{\varphi} dV_g \\
   & \le \delta^{2n+2} \int_{B(P;\delta/6)} \big| \bar{\p} ((\chi \circ r) w^j) \big|_g^2 e^{-\varphi} dV_g \\
   & \le C(n, A_1, A_2, A_3) \delta^{2n+2} \int_0^{\delta/6} r^{2\sigma-1} dr \\
   & \le C(n, A_1, A_2, A_3) \sigma^{-1} \delta^{2n+ 2 + 2\sigma}.
\end{align*}
On the other hand, it follows from \eqref{eq:ugijQP} and \eqref{eq:udgijQ} that
\[
   |f|_{0; B(P; \delta/ 6)} \le C(n, A_4, A_5).
\]
Hence,
\begin{align}
   | d \beta^j|_{0; B(P; \delta/ 24)} 
   & \le C(n, A) [\sigma^{-1} \delta^{\sigma} + \delta] \notag \\
   & \le C(n, A) \sigma_1^{-1} \delta^{\sigma_1}, \quad \sigma_1 \equiv \min \{1, \sigma\}, \label{eq:dbetaj}
\end{align}
for all $1 \le j \le n$. It follows that
\[
   |\beta^j(Q)| \le C(n, A) \sigma_1^{-1} r^{1+\sigma_1} \quad \textup{for any $Q \in B(P; \delta/24)$},
\]
where $r = d(P, Q)$.

As in \eqref{eq:vjwj} we let
\[
   v^i = w^i - \beta^i \quad \textup{on $B(P; \delta)$}.
\]
Then, for any $Q \in B(P; \delta / 24)$, 
\begin{align*}
    | dv^1 \wedge \cdots \wedge d v^n|_g (Q)  
   & \ge |dw^1 \wedge \cdots \wedge dw^n|_g(Q) -  C(n, A) \sigma_1^{-1} \delta^{\sigma_1} \\
   & \ge A_4^{-n/2} - C(n, A) \sigma_1^{-1} \delta^{\sigma_1},
\end{align*}
where we use \eqref{eq:dbetaj} and \eqref{eq:ugijQP}. Moreover,
\[
    |v^j - w^j| = |\beta^j| \le C(n, A) \sigma_1^{-1} r^{1+\sigma_1}.
\]
Fix now a constant $\delta$ satisfying \eqref{eq:delta04} and
\[
   C(n, A) \sigma_1^{-1} \delta^{\frac{\sigma_1}{2}} \le \frac{A_4^{-n/2}}{2} \le \frac{1}{2}. 
\]
Denote $\delta_1 = \delta/24$. It follows that $dv^1,\ldots, dv^n$ for an independent set at every point in $B(P; \delta_1)$; hence, $\{v^1, \ldots, v^n\}$ forms a coordinate system on $B(P; \delta_1)$ satisfying \eqref{eq:bdd|vj|}. Estimates \eqref{eq:pv/pw} follows from $|d\beta^i(\p / \p w^k)| \le |d\beta^i| |\p / \p w^k|$, \eqref{eq:ugijQP}, and \eqref{eq:dbetaj}.
\end{proof}

\emph{Proof of Theorem~\ref{th:WYqc}}.
If $(M, \omega)$ has quasi-bounded geometry, then by definition the coordinate map $\psi$ is a local biholomorphism. It then follows from \eqref{eq:bddgk} that the curvature $R_{\textup{m}}$ of $\omega$ and all its covariant derivatives are all bounded.

Conversely, if $|R_{\textup{m}}| \le C_0$ then in particular the sectional curvature $K(\omega) \le C_0$. It follows from the standard Rauch Comparison Theorem (see, for example, \cite[p. 218, Proposition 2.4]{doCarmo:1992}) that for each $P \in M$, $B_{\omega}(P; R)$ contains no conjugate points of $P$ for $R < \pi / \sqrt{C_0}$. Fix $R = \pi / (2 \sqrt{C_0})$. Then, the exponential map 
\begin{equation} \label{eq:expp}
   \exp_p : B(0; R) \subset T_{\mathbb{R},P} M \longrightarrow M
\end{equation}
is nonsingular, and hence, a local diffeomorphism. The exponential map then pulls back a K\"ahler structure on $B(0; R)$ with K\"ahler metric $\exp_P^*\omega$ so that $\exp_P$ is a locally biholomorphic isometry. In particular, every geodesic in $B(0; R)$ through the origin is a straight line. Hence, $B(0; R)$ contains no cut point of the origin.

Pick an orthonormal basis $\{e_1,\ldots, e_{2n}\}$ of $T_{\mathbb{R}, P} M$ with respect to $g \equiv \exp_P^*\omega$, such that the associated smooth coordinate functions $\{x^1, \ldots, x^{2n}\}$ on $T_{\mathbb{R},P} M$ satisfies
\[
   \bar{\p} (x^j + \sqrt{-1} \, x^{n+j}) = 0 \quad \textup{at $x = 0$},
\]
for each $j = 1, \ldots, n$.
The complex-valued function $w^j \equiv x^j + \sqrt{-1} \, x^{n+j}$ need not be holomorphic. Nevertheless, 
we have the crucial Siu-Yau's inequality: 
If the sectional curvature $K(g)$ of $g$ satisfies $-A_2 \le K(g) \le A_1$ with constant $A_1, A_2 > 0$, then
\begin{equation} \label{eq:SYpwdbar}
  |\bar{\p} w^j|_{g}  \le n^{5/2} A r^2 e^{A_2 r^2/ 6}  \quad \textup{on $B(0; R)$},
\end{equation}
where  
\[
  r = d(0, x) = |x| = \sqrt{(x^1)^2 + \cdots + (x^{2n})^2}, \quad x \in B(0; R),
 \]
and $A>0$ is a constant depending only on $A_1$ and $A_2$. In fact, inequality \eqref{eq:SYpwdbar} follows the same procedure of estimating the dual vector field as in \cite[pp. 246--247]{Siu-Yau:1977} (its local version is also observed by \cite[p. 159, (8.22)]{Greene-Wu:1979}), with two modifications given below, due to the different upper bounds for the sectional curvature. The inequality in \cite[p. 246, line 11, i.e., p. 235, Proposition (1.5)]{Siu-Yau:1977} is replaced by
\begin{equation} \label{eq:mSYineq1}
   \Big|\nabla_{\p / \p r} \nabla_{\p / \p r} \Big(r \frac{\p}{\p x^l}\Big) \Big|_g \le  n^2 A r e^{A_2 r^2/ 6}, \quad 1 \le l \le 2n,
\end{equation}
and the inequality $|X|^2 \ge \frac{1}{2}\sum_{j=1}^n(|\lambda_j|^2+|\mu_j|^2)$ in \cite[p. 247]{Siu-Yau:1977} is replaced by 
\begin{equation} \label{eq:mSYineq2}
   | X |^2 \ge \frac{1}{2} \min\Big\{1, \frac{\sin (\sqrt{A_1} r)}{ \sqrt{A_1} r} \Big\} \sum_{j=1}^{n} (|\lambda_j|^2 + |\mu_j|^2),
\end{equation}
under the curvature condition $- A_2 \le K(g) \le A_1$; both \eqref{eq:mSYineq1} and \eqref{eq:mSYineq2} follow readily from the standard comparison argument (\eqref{eq:mSYineq2} is indeed half of \eqref{eq:Wgij} below).

Let $\{g_{ij}\}$ be the components of metric $g \equiv \exp_P^*\omega$ with respect to $\{x^j\}$. If the sectional curvature satisfies $-A_2 \le K(g) \le A_1$, then again by the standard Rauch comparison theorem we obtain
\begin{align}
   A^{-1} (\delta_{ij}) & \le g_{ij}(x) \le A (\delta_{ij}), \quad 1 \le i,j \le 2n, \label{eq:Wgij}\\  
   \Big| \frac{\p g_{ij}}{\p x^k} (x) \Big| 
   & \le n^4 A r \exp \Big( \frac{A_2}{3} r^2\Big), \quad 1 \le i,j, k \le 2n, \label{eq:Wdgij} 
\end{align}
for each $x \in B(0; R)$, where $r(x) = d(0, x)$ and $A>0$ is a constant depending only on $A_1$ and $A_2$.

Thus, we can apply Lemma~\ref{le:WYdelta1} with $B(P; \delta_0) = B(0; R)$ and $\phi(r) = C(n, C_0) r^2$ to obtain a smaller ball $B(0; \delta_1)$, on which there is a holomorphic coordinate system $\{v^1,\ldots, v^n\}$ such that $v^j(0) = 0$, $d v^j (0) = d w^j(0)$, $| v^j (x) - w^j(x) |  \le \delta_1/(2\sqrt{n}\,)$, and
\begin{align}
     \Big| \frac{\p v^i}{\p w^j} (x) - \delta_{ij} \Big| \le \frac{\delta_1}{2}, \quad \Big| \frac{\p v^i}{\p \overline{w}^j}(x) \Big| \le \frac{\delta_1}{2}, \label{eq:pvjpwk}
\end{align}
 for all $x \in B(0; \delta_1)$, $1 \le i, j \le n$.
Here the radius $1/24 \ge \delta_1>0$ depends only on $n$ and $C_0$.
Since $\textbf{v} \equiv (v^1,\ldots, v^n)$ is biholomorphic from $B(0; \delta_1)$ onto its image $U$ in $\mathbb{C}^n$, the image $U$ satisfies 
\[
     B_{\mathbb{C}^n}(0; \delta_1/2) \subset U \subset B_{\mathbb{C}^n} (0; 3 \delta_1/2).
\]

It is now standard to verify that the composition $\exp_P \circ \,\textbf{v}^{-1}$ is the desired quasi-coordinate map for $P$ on $U$. Denote by $\{g_{i\bar{j}}\}$ the components of $\exp_P^*\omega$ with respect to coordinates $\{v^j\}$, by slightly abuse of notation. By \eqref{eq:Wgij} and \eqref{eq:pvjpwk}, we obtain
  \[
      C^{-1} (\delta_{ij}) \le (g_{i\bar{j}}) \le C (\delta_{ij}), 
  \]
where $C>0$ is a generic constant depending only on $C_0$ and $n$. This proves \eqref{eq:simEu}.
The estimate of first order term $| \p g_{i\bar{j}} / \p v^k|$ follows from \eqref{eq:Wdgij} and \eqref{eq:pvjpwk}. 
The higher order estimate \eqref{eq:bddgk} follows from applying the standard Schauder estimate to the Ricci and scalar curvature equations \cite[p. 582]{Tian-Yau:1990} (see also \cite[p. 259, Theorem 6.1]{DeTurck-Kazdan:1981}).

For the second statement, fix a positive number $0 < R < r_{\omega}$, where $r_{\omega}$ denotes the injectivity radius of $(M, \omega)$. Then, for every $P\in M$, the exponential map given by \eqref{eq:expp}, i.e., $\exp_P: B(0; R) \subset T_{\mathbb{R},P} M \to M$, is a diffeomorphism onto its image. From here the same process implies $(M, \omega)$ has bounded geometry.
 \qed 

\section{Wan-Xiong Shi's Lemmas}

The following lemma is useful to construct the quasi-bounded geometry.  
\begin{lemma} \label{le:Shi}
Let $(M, \omega)$ be an $n$-dimensional complete noncompact K\"ahler manifold such that 
\begin{equation} \label{eq:pinH}
   -\kappa_2 \le H(\omega) \le - \kappa_1 < 0
\end{equation}
for two constants $\kappa_1, \kappa_2 > 0$. Then, there exists another K\"ahler metric $\tilde{\omega}$ such that satisfying
\begin{align}
  & C^{-1} \omega \le \tilde{\omega} \le C \omega, \label{eq:qsismt} \\
  & - \tilde{\kappa}_2 \le H(\tilde{\omega}) \le - \tilde{\kappa}_1 < 0, \label{eq:nHolmtR} \\
  & \sup_{x \in M} |\tilde{\nabla}^q \tilde{R}_{\alpha\bar{\beta}\gamma\bar{\sigma}}| \le C_q, \label{eq:bddmtR} 
\end{align}
where $\tilde{\nabla}^q \tilde{R}_{\textup{m}}$ denotes the $q$th covariant derivative of the curvature tensor $\tilde{R}_{\textup{m}}$ of $\tilde{\omega}$ with respect to $\tilde{\omega}$, and the positive constants $C = C(n)$, $\tilde{\kappa}_j = \tilde{\kappa}_j (n, \kappa_1, \kappa_2)$, $j = 1, 2$, $C_q = C_q(n, q, \kappa_1, \kappa_2)$ depend only on the parameters in their parentheses. 
\end{lemma}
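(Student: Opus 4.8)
The plan is to smooth $\omega$ with the K\"ahler--Ricci flow and to take $\tilde\omega=\omega(t_0)$ at a stopping time $t_0=t_0(n,\kappa_1,\kappa_2)$. The first step converts the pinching of $H(\omega)$ into a bound on the full curvature tensor: by the classical polarization identity for K\"ahler curvature tensors (Berger's lemma), $-\kappa_2\le H(\omega)\le-\kappa_1$ forces $\sup_M|R_{\textup{m}}|_{\omega}\le c(n)\kappa_2$ for a dimensional constant $c(n)$. With this bound in hand, W.-X.~Shi's short--time existence theorem for the K\"ahler--Ricci flow on complete noncompact K\"ahler manifolds of bounded curvature gives a smooth solution $\omega(t)$, $t\in[0,\theta]$, with $\omega(0)=\omega$, $\theta=\theta(n)/\kappa_2$, each $\omega(t)$ complete and K\"ahler, $\sup_{M\times[0,\theta]}|R_{\textup{m}}(\omega(t))|_{\omega(t)}\le 2c(n)\kappa_2$, and, by Shi's interior derivative estimates, $\sup_M|\tilde\nabla^q R_{\textup{m}}(\omega(t))|_{\omega(t)}\le C_q(n,q)\kappa_2\,t^{-q/2}$ for $t\in(0,\theta]$.

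Two of the three conclusions are then routine for any fixed $t_0\in(0,\theta]$ with $\tilde\omega=\omega(t_0)$. Since $\partial_t g_{i\bar j}=-R_{i\bar j}$ and $\textup{Ric}$ is a contraction of $R_{\textup{m}}$, one has $-\Lambda\,\omega(t)\le\textup{Ric}(\omega(t))\le\Lambda\,\omega(t)$ with $\Lambda=\sqrt{n}\cdot 2c(n)\kappa_2$; a standard ODE comparison for $g(t)$ then yields $e^{-\Lambda t}\omega\le\omega(t)\le e^{\Lambda t}\omega$, which is \eqref{eq:qsismt} at $t_0$ as long as $\Lambda t_0$ is bounded by a dimensional constant. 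The estimate \eqref{eq:bddmtR} is Shi's derivative bound evaluated at $t_0$, and the lower bound $H(\tilde\omega)\ge-\tilde\kappa_2$ in \eqref{eq:nHolmtR} follows from $|R_{\textup{m}}(\omega(t_0))|_{\omega(t_0)}\le 2c(n)\kappa_2$, with $\tilde\kappa_2=\tilde\kappa_2(n,\kappa_2)$.

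The crux is the strict negative upper bound $H(\tilde\omega)\le-\tilde\kappa_1<0$, i.e.\ that a short-time flow does not destroy negativity of the holomorphic sectional curvature. For this I would run a maximum--principle argument on the curvature tensor. Along the K\"ahler--Ricci flow $\partial_t R_{\textup{m}}=\Delta R_{\textup{m}}+R_{\textup{m}}\ast R_{\textup{m}}$, with reaction term bounded by $c(n)|R_{\textup{m}}|^2\le c(n)\kappa_2^2$ on $M\times[0,\theta]$. Since, at each point, $\{T:\ T(\eta,\bar\eta,\eta,\bar\eta)\le a\ \text{for every unit }\eta\}$ is a convex subset of the space of curvature operators and the reaction ODE moves $T(\eta,\bar\eta,\eta,\bar\eta)$ by at most $c(n)\kappa_2^2$ per unit time, Hamilton's maximum principle for systems --- applicable here because Shi's flow has uniformly bounded curvature on a complete manifold --- shows the time-dependent convex region $\{H\le-\kappa_1+c(n)\kappa_2^2 t\}$ is preserved; hence $H(\omega(t))\le-\kappa_1+c(n)\kappa_2^2 t$ on $M$. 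Taking
\[
   t_0=\min\Big\{\tfrac{\theta}{2},\ \tfrac{\kappa_1}{2c(n)\kappa_2^2}\Big\},\qquad \tilde\omega=\omega(t_0),
\]
gives $H(\tilde\omega)\le-\tfrac{\kappa_1}{2}=:-\tilde\kappa_1<0$. Since $\kappa_1\le\kappa_2$ and $\theta\sim\kappa_2^{-1}$, one checks in both cases of the minimum that $\Lambda t_0\le C''(n)$, so \eqref{eq:qsismt} holds with $C=e^{C''(n)}=C(n)$, while $\tilde\kappa_j=\tilde\kappa_j(n,\kappa_1,\kappa_2)$ and $C_q=C_q(n,q)\kappa_2 t_0^{-q/2}=C_q(n,q,\kappa_1,\kappa_2)$, as claimed.

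The step I expect to be most delicate is this last one on the noncompact $M$: one must verify the reaction term obeys the precise ODE-invariance estimate needed (a routine but lengthy contraction computation, together with Uhlenbeck's trick to absorb the moving metric into Hamilton's formalism) and invoke a maximum principle for systems valid for a Ricci flow with bounded curvature on a complete manifold. An equivalent hands-on alternative is to apply the scalar maximum principle for bounded subsolutions along a bounded-curvature flow directly to $\mathfrak{h}(x,t)=\sup\{R(\eta,\bar\eta,\eta,\bar\eta):|\eta|_{\omega(t)}=1\}$, which satisfies $\partial_t\mathfrak{h}\le\Delta\mathfrak{h}+c(n)\kappa_2^2$ in the barrier sense, the only extra care being the usual one of differentiating a supremum through a near-maximizing direction.
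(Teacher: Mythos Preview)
Your proposal is correct and follows essentially the same route as the paper: run Shi's K\"ahler--Ricci flow for a short time determined by $n,\kappa_1,\kappa_2$, read off \eqref{eq:qsismt} and \eqref{eq:bddmtR} from the standard ODE comparison and Shi's derivative estimates, and preserve strict negativity of $H$ by a maximum-principle argument on $\mathfrak{h}(x,t)=\max_{|\eta|_{\omega(t)}=1}R(\eta,\bar\eta,\eta,\bar\eta)$. The paper opts precisely for your ``hands-on alternative'': it extends a maximizing direction by parallel transport to make $\mathfrak{h}$ smooth near a would-be maximum and then invokes a tailor-made scalar maximum principle (their Lemma~\ref{le:mpcIII}, built from Shi's auxiliary cutoff function $\theta(x,t)$) valid for bounded-curvature Ricci flow on complete noncompact manifolds.
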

Lemma~\ref{le:Shi} \eqref{eq:qsismt} and \eqref{eq:bddmtR} are contained in W. X. Shi~\cite{Shi:1997}. We provide below the details for the pinching estimate \eqref{eq:nHolmtR} of the holomorphic sectional curvature. Of course, if the manifold were compact, then \eqref{eq:nHolmtR} would follow trivially from the usual uniform continuity of a continuous function. However, this does not hold for a general bounded smooth function on a complete noncompact manifold. Here the maximum principle (Lemma~\ref{le:mpcIII} in Appendix~\ref{se:appMp}) has to be used.

In this section and Appendix~\ref{se:appMp}, we adopt the following convention: We denote by $\omega = (\sqrt{-1}/2) g_{\alpha \bar{\beta}} dz^{\alpha} \wedge d\bar{z}^{\beta}$ the K\"ahler form of a hermitian metric $g_{\omega}$. The real part of the hermitian metric $g_{\omega} = g_{\alpha\bar{\beta}} dz^{\alpha} \otimes d\bar{z}^{\beta}$ induces a Riemannian metric $g = g_{ij} dx^i \otimes dx^j$ on $T_{\mathbb{R}} M$ which is compatible with the complex structure $J$. 
Extend $g$ linearly over $\mathbb{C}$ to $T_{\mathbb{R}} M \otimes_{\mathbb{R}} \mathbb{C} = T'M \oplus \overline{T'M}$, and then restricting it to $T'M$ recovers $(1/2) g_{\omega}$; that is,
\[
   g(v, w) = \Rea ( g_{\omega} (\eta, \xi) ), \quad g_{\omega} (\eta, \xi) = 2 g (\eta, \overline{\xi}).
\]
Here $v, w$ are real tangent vectors, and $\eta, \xi$ are their corresponding holomorphic tangent vectors under the $\mathbb{R}$-linear isomorphism $T_{\mathbb{R}} M \to T'M$, i.e., $\eta = \frac{1}{2} (v - \sqrt{-1} Jv)$, $\xi = \frac{1}{2} (w - \sqrt{-1} Jw)$. Then, the curvature tensor $R_{\textup{m}}$ satisfies
\[
    R(\eta, \overline{\eta}, \xi, \overline{\xi}) = \frac{1}{2} R (v, Jv, Jw, w).
\]
It follows that
\[
    H(x, \eta) = R(\eta, \overline{\eta}, \eta, \overline{\eta}) = \frac{1}{2} R(v, Jv, Jv, v).
\]
Unless otherwise indicated, the Greek letters such as $\alpha, \beta$ are used denote the holomorphic vectors $\p / \p z^{\alpha}, \p / \p z^{\beta}$ and range over $\{1, \ldots, n\}$, while the latin indices such as $i, j, k$ are used to denote real vectors $\p / \p x^i, \p / \p x^j$ and range over $\{1,\ldots, 2n\}$.

\noindent \emph{Proof of Lemma~\ref{le:Shi}}.
The assumption \eqref{eq:pinH} on $H$ implies the the curvature tensor $R_{\textup{m}}$ is bounded; more precisely, 
\[
   \sup_{x \in M} |R_{\textup{m}}(x)| \le  \frac{34}{3} n^2 (\kappa_2 - \kappa_1).
\]
Here and in many places of the proof, the constant in an estimate is given in certain explicit form, mainly to indicate its dependence on the parameters such as $\kappa_i$ and $n$.
Applying \cite[p. 99, Corollary 2.2]{Shi:1997} yields that the equation
\[
   \left\{ 
   \begin{aligned}
   \frac{\p}{\p t} g_{ij}(x, t) & = - 2 R_{ij} (x, t)\\
   g_{ij}(0, t) & = g_{ij}(x)
   \end{aligned} \right.
\]
admits a smooth solution $\{g_{ij}(x,t)\} > 0$ for $0 \le t \le \theta_0(n)/(\kappa_2 - \kappa_1)$, where $\theta_0(n) > 0$ is a constant depending only on $n$. Furthermore, the curvature $R_{\textup{m}}(x,t) = \{R_{ijkl}(x,t)\}$ of $\{g_{ij}(x,t)\}$ satisfies that, for each nonnegative integer $q$,
\begin{equation} \label{eq:bdddqR}
   \sup_{x \in M} |\nabla^q R_{\textup{m}} (x,t) |^2 \le \frac{C(q, n)(\kappa_2 - \kappa_1)^2}{ t^q}, \quad \textup{for all $0 < t \le \frac{\theta_0(n)}{\kappa_2-\kappa_1}\equiv T$},
\end{equation}
where $C(q,n)>0$ is a constant depending only $q$ and $n$. 
In particular, the metric $g_{ij}(x,t)$ satisfies Assumption A in \cite[p. 120]{Shi:1997}. Then, by \cite[p. 129, Theorem 5.1]{Shi:1997}, the metric $g_{ij}(x,t)$ is K\"ahler,  and satisfies
\[
    \left\{ 
      \begin{aligned}
      \frac{\p}{\p t} g_{\alpha \bar{\beta}}(x, t) & = - 4 R_{\alpha \bar{\beta}}(x, t) \\
      g_{\alpha \bar{\beta}}(x, 0) & = g_{\alpha \bar{\beta}}(x),
      \end{aligned}
      \right.
\]
for all $0 \le t \le T$.
It follows that
\begin{equation} \label{eq:eqisg}
   e^{- t C(n) (\kappa_2 - \kappa_1)} g_{\alpha \bar{\beta}}(x) \le g_{\alpha \bar{\beta}}(x, t) \le e^{t C(n) (\kappa_2 - \kappa_1)} g_{\alpha \bar{\beta}}(x),
\end{equation}
for all $0 \le t \le T = \theta_0(n) / (\kappa_2 - \kappa_1)$. Here and below, we denote by $C(n)$ and $C_j(n)$ generic positive constants depending only on $n$. Then, for an arbitrary $0 < t \le T$, the metric $\omega(x,t) = (\sqrt{-1}/2)g_{\alpha \bar{\beta}}(x, t) dz^{\alpha} \wedge d\bar{z}^{\beta}$ satisfies \eqref{eq:qsismt} and \eqref{eq:bddmtR}; in particular, the constant $C$ in \eqref{eq:qsismt} depends only on $n$, since $tC(n)(\kappa_2-\kappa_1) \le \theta_0(n)C(n)$.

We next to show that there exists a small $0 < t_0 \le T$ so that $\omega(x,t)$ also satisfies \eqref{eq:nHolmtR} whenever $0 < t \le t_0$.  
Recall that the curvature tensor satisfies the evolution equation (see, for example, \cite[p. 143, (122)]{Shi:1997})
\begin{align*}
    \frac{\p}{\p t} R_{\alpha \bar{\beta} \gamma \bar{\sigma}} 
   & = 4 \Delta R_{\alpha \bar{\beta} \gamma \bar{\sigma}}
     + 4 g^{\mu \bar{\nu}} g^{\rho \bar{\tau}} (
    R_{\alpha \bar{\beta} \mu \bar{\tau}} R_{\gamma \bar{\sigma} \rho \bar{\nu}} + R_{\alpha \bar{\sigma} \mu \bar{\tau}} R_{\gamma \bar{\beta} \rho \bar{\nu}} - R_{\alpha \bar{\nu} \gamma \bar{\tau}} R_{\mu \bar{\beta} \rho \bar{\sigma}}) \\
    & \quad  - 2 g^{\mu \bar{\nu}} (R_{\alpha \bar{\nu}} R_{\mu \bar{\beta} \rho \bar{\tau}} + R_{\mu \bar{\beta}} R_{\alpha \bar{\nu} \rho \bar{\tau}} 
    + R_{\gamma \bar{\nu}} R_{\alpha \bar{\beta} \mu \bar{\sigma}} 
    + R_{\mu \bar{\sigma}} R_{\alpha \bar{\beta} \rho \bar{\nu}}),
\end{align*}
where $\Delta \equiv \Delta_{\omega(x,t)} = \frac{1}{2} g^{\alpha \bar{\beta}}(x, t)(\nabla_{\bar{\beta}} \nabla_{\alpha} + \nabla_{\alpha} \nabla_{\bar{\beta}})$.
It follows that
\begin{align}
   & \Big(\frac{\p}{\p t} R_{\alpha \bar{\beta} \gamma \bar{\sigma}}\Big) \eta^{\alpha} \bar{\eta}^{\beta} \eta^{\gamma} \bar{\eta}^{\sigma} \notag \\
   & \le 4 \Big(\Delta R_{\alpha \bar{\beta} \gamma \bar{\sigma}}\Big)\eta^{\alpha} \bar{\eta}^{\beta} \eta^{\gamma} \bar{\eta}^{\sigma} 
    + C_1(n) | \eta |_{g_{\alpha \bar{\beta}}(x,t)}^4 |R_{\textup{m}} (x, t)|_{\omega(x,t)}^2 \notag \\
   & \le 4 \Big(\Delta R_{\alpha \bar{\beta} \gamma \bar{\sigma}}\Big)\eta^{\alpha} \bar{\eta}^{\beta} \eta^{\gamma} \bar{\eta}^{\sigma} 
    + C_1(n) (\kappa_2 - \kappa_1)^2 | \eta |_{\omega(x,t)}^4. \label{eq:dR/dt<LRk}
\end{align}
by \eqref{eq:bdddqR} with $q = 0$. Let
\[
   H(x, \eta, t) 
   = \frac{R_{\alpha \bar{\beta} \gamma \bar{\sigma}} \eta^{\alpha} \bar{\eta}^{\beta} \eta^{\gamma} \bar{\eta}^{\sigma}}{|\eta|_{\omega(x,t)}^4}.
\]
Then, by \eqref{eq:pinH} and \eqref{eq:bdddqR},
\begin{align*}
   H(x, \eta, 0) 
   & \le - \kappa_1 , \\
   | H (x, \eta, t)| 
   & \le  |R_{\textup{m}}(x, t)|_{\omega(x,t)}  \le C_0(n) ( \kappa_2 - \kappa_1).
\end{align*}
To apply the maximum principle (Lemma~\ref{le:mpcIII} in Appendix~\ref{se:appMp}), we denote
\begin{align*}
   h(x, t)
    = \max \{ H (x, \eta, t) ; |\eta|_{\omega (x,t)} = 1\},
\end{align*}
for all $x \in M$ and $0 \le t \le \theta_0(n) / (\kappa_2 - \kappa_1)$.
Then, $h$ with \eqref{eq:dR/dt<LRk} satisfy the three conditions in Lemma~\ref{le:mpcIII}. 
It follows that 
 \[
   h(x, t) \le C_2(n) (\kappa_2 - \kappa_1)^2 t - \kappa_1.
 \]
 where $C_2(n) = C_1(n) + 8 \sqrt{n} C_0(n)^2 > 0$. 
Let
\[
    t_0 = \min\Big\{\frac{\kappa_1}{2 C_2(n)(\kappa_2 - \kappa_1)^2}, \frac{\theta_0(n)}{\kappa_2 - \kappa_1} \Big\} > 0.
\]
Then, for all $0 < t \le t_0$,
\[
   H (x, \eta, t) \le h(x, t) \le - \frac{\kappa_1}{2} < 0.
\]
Since the curvature tensor is bounded (by \eqref{eq:bdddqR} with $q=0$), we have
\[
   H (x, \eta, t) \ge - C_0(n) (\kappa_2 - \kappa_1).
\]
Thus, for an arbitrary $t \in (0, t_0]$, the metric $\omega(x, t) = (\sqrt{-1}/2) g_{\alpha \bar{\beta}}(x, t) dz^{\alpha} \wedge d\bar{z}^{\beta}$ is a desired metric satisfying \eqref{eq:nHolmtR}, and also \eqref{eq:qsismt} and \eqref{eq:bddmtR}.
\qed

\begin{lemma} \label{le:Kpinch}
Let $(M^n, \omega)$ be a complete noncompact K\"ahler manifold whose Riemannian sectional curvature is pinched between two negative constants, i.e.,
\begin{equation*} 
   - \kappa_2 \le K(\omega) \le - \kappa_1 < 0.
\end{equation*}
Then, there exists another K\"ahler metric $\tilde{\omega}$ satisfying
\begin{align*}
  & C^{-1} \omega \le \tilde{\omega} \le C \omega, \\ 
  & - \tilde{\kappa}_2 \le K(\tilde{\omega}) \le - \tilde{\kappa}_1 < 0, \\ 
  & \sup_{x \in M} |\tilde{\nabla}^q \tilde{R}_{\alpha\bar{\beta}\gamma\bar{\sigma}}| \le C_q, 
\end{align*}
where $\tilde{\nabla}^q \tilde{R}_{\alpha \bar{\beta} \gamma \bar{\sigma}}$ denotes the  $q$th covariant derivatives of $\{\tilde{R}_{\alpha \bar{\beta} \gamma \bar{\sigma}}\}$ with respect to $\tilde{\omega}$, and the positive constants $C = C(n)$, $\tilde{\kappa}_j = \tilde{\kappa}_j (n, \kappa_1, \kappa_2)$, $j = 1, 2$, $C_q = C_q(n, q, \kappa_1, \kappa_2)$ depend only on the parameters inside their parentheses. 
\end{lemma}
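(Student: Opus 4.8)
The plan is to repeat the strategy of Lemma~\ref{le:Shi}, now tracking the Riemannian sectional curvature in place of the holomorphic one: we smooth $\omega$ by the K\"ahler--Ricci flow of W.~X.~Shi, and show that for a short time the evolved metric keeps its negative sectional pinching while acquiring bounded covariant derivatives of curvature.

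First, $-\kappa_2 \le K(\omega) \le -\kappa_1$ bounds the whole curvature tensor, $\sup_M |R_{\textup{m}}| \le C(n)(\kappa_2-\kappa_1)$, by the standard algebraic estimate of $R_{\textup{m}}$ in terms of sectional curvatures. Hence, exactly as in the proof of Lemma~\ref{le:Shi}, Shi's results (\cite[Corollary 2.2 and Theorem 5.1]{Shi:1997}) produce a smooth solution $\{g_{ij}(x,t)\}$ of the Ricci flow on $0 \le t \le T = \theta_0(n)/(\kappa_2-\kappa_1)$ with $g_{ij}(\cdot,0) = g_{ij}$; each $g_{ij}(\cdot,t)$ is K\"ahler, the derivative bounds \eqref{eq:bdddqR} hold, and \eqref{eq:eqisg} gives $C^{-1}\omega \le \omega(\cdot,t) \le C\omega$ with $C = C(n)$ (since $tC(n)(\kappa_2-\kappa_1) \le \theta_0(n)C(n)$ for $0 \le t \le T$). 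This supplies every assertion of the lemma except the pinching $-\tilde\kappa_2 \le K(\tilde\omega) \le -\tilde\kappa_1 < 0$, which is the only point that needs a genuinely new argument.

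For the pinching we use the evolution equation of the Riemann tensor under the Ricci flow,
\[
   \frac{\p}{\p t} R_{ijkl} = \Delta R_{ijkl} + Q_{ijkl}(R_{\textup{m}}), \qquad \Delta = \Delta_{\omega(x,t)},
\]
where $Q_{ijkl}$ is a universal quadratic polynomial in $R_{\textup{m}}$. Fix $x \in M$ and a $2$-plane $\Pi \subset T_{\mathbb{R},x}M$, write $K_\Pi(x,t) = R(v,w,w,v)$ with $\{v,w\}$ kept orthonormal in the evolving metric $g(\cdot,t)$, and set
\[
   h(x,t) = \max\{\, K_\Pi(x,t) : \Pi \text{ a $2$-plane in } T_{\mathbb{R},x}M \,\}.
\]
This $h$ is continuous (the Grassmannian of $2$-planes is compact) and, by \eqref{eq:bdddqR} with $q = 0$, satisfies $|h(x,t)| \le C_0(n)(\kappa_2-\kappa_1)$ and $h(x,0) \le -\kappa_1$. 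Differentiating $K_\Pi$ in $t$, and using $\p_t g = -2\Ric$ to handle the $t$-dependence of the orthonormal pair $\{v,w\}$ (whose contribution is a product of $\Ric$ and $R_{\textup{m}}$), yields
\[
   \frac{\p}{\p t} K_\Pi \le \Delta K_\Pi + C_1(n)|R_{\textup{m}}(x,t)|^2 \le \Delta K_\Pi + C_1(n)(\kappa_2-\kappa_1)^2 .
\]
The family $\{K_\Pi\}$ together with this inequality verifies the hypotheses of the maximum principle (Lemma~\ref{le:mpcIII} in Appendix~\ref{se:appMp}) applied to $h$, whence $h(x,t) \le C_2(n)(\kappa_2-\kappa_1)^2 t - \kappa_1$ for $0 \le t \le T$, with $C_2(n)$ of the same shape as in Lemma~\ref{le:Shi}. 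Setting
\[
   t_0 = \min\Big\{ \frac{\kappa_1}{2 C_2(n)(\kappa_2-\kappa_1)^2},\ \frac{\theta_0(n)}{\kappa_2-\kappa_1} \Big\} > 0
\]
gives $K(\omega(x,t)) \le h(x,t) \le -\kappa_1/2 < 0$ for $0 < t \le t_0$, while \eqref{eq:bdddqR} with $q = 0$ simultaneously gives $K(\omega(x,t)) \ge -C_0(n)(\kappa_2-\kappa_1)$. Hence $\tilde\omega = \omega(\cdot,t_0)$ has all the required properties.

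The main obstacle, as in Lemma~\ref{le:Shi}, is this maximum-principle step, which here has two features absent from the holomorphic case: the maximum defining $h$ runs over the Grassmannian of $2$-planes rather than over the unit sphere of holomorphic tangent vectors, so one must check that $h$ is still an admissible subsolution for Lemma~\ref{le:mpcIII}; and, because $g(\cdot,t)$ evolves, the orthonormal pair $\{v,w\}$ spanning a given $2$-plane cannot be frozen in $t$, so the differential inequality for $K_\Pi$ picks up lower-order terms from $\p_t g = -2\Ric$ that must be seen to be quadratic in $R_{\textup{m}}$. Alternatively, one may invoke Uhlenbeck's trick, evolving an orthonormal frame so that the curvature operator satisfies a reaction--diffusion equation on a fixed bundle; then $K_\Pi$ is a fixed quadratic functional and the computation becomes formally identical to that in Lemma~\ref{le:Shi}. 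All remaining details are a direct transcription of the proof of Lemma~\ref{le:Shi}.
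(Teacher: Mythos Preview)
Your approach is essentially the same as the paper's: run Shi's K\"ahler--Ricci flow, invoke \eqref{eq:bdddqR} and \eqref{eq:eqisg} for the derivative bounds and the equivalence with $\omega$, and use a maximum principle on $h(x,t)=\max_{\Pi}K_\Pi(x,t)$ to preserve a negative upper bound for small time. The one discrepancy is that you cite Lemma~\ref{le:mpcIII}, which is stated for a tensor $W_{\alpha\bar\beta\gamma\bar\sigma}$ and a single holomorphic vector $\eta\in T'_xM$; the sectional curvature functional lives on pairs $(v,w)$ of real vectors, so Lemma~\ref{le:mpcIII} does not literally apply. The paper handles this by formulating and invoking a separate real-variable analogue, Lemma~\ref{le:mpcK}, whose hypothesis is exactly the differential inequality for $W_{ijkl}v^iw^jw^kv^l$ and whose $h$ is the maximum over $|v\wedge w|_{g(x,t)}=1$; its proof is identical to that of Lemma~\ref{le:mpcIII} after the obvious change of extending the maximizing pair $(v_*,w_*)$ (rather than a single $\eta_*$) by parallel transport. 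Your discussion of the ``obstacle'' correctly identifies this point, so the fix is just to replace the reference to Lemma~\ref{le:mpcIII} by Lemma~\ref{le:mpcK}.
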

The proof of Lemma~\ref{le:Kpinch} is entirely similar to that of Lemma~\ref{le:Shi}, with the following modification: The function $\varphi$ is now given by
\[
   \varphi(x, v, w, t)  = R_{ijkl}(x, t) v^i w^j w^k v^l,
\]
for any $x \in M$ and $v, w \in T_{\mathbb{R},x} M$, and
\begin{align*}
   h(x, t) & = \max \{ \varphi(x, \eta, \xi, t) ; | \eta \wedge \xi |_{g(x,t)} =  1 \} \\
             & = \max \{ \varphi(x, \eta, \xi, t); |\eta|_{g(x,t)} = |\xi|_{g(x,t)} = 1, \langle \eta, \xi \rangle_{g(x,t)} = 0\}.
\end{align*}
Here $|\eta \wedge \xi|^2 = |\eta|^2 |\xi|^2 - \langle \eta, \xi \rangle^2$. The result then follows from Lemma~\ref{le:mpcK}.

\begin{appendices}

\section{Maximum principles} \label{se:appMp}
The proof of Lemma~\ref{le:Shi} uses the following maximum principle, which extends \cite[p. 124, Lemma 4.7]{Shi:1997} to tensors; compare \cite[pp. 145--147]{Shi:1997}, \cite[Theorem 9.1]{Hamilton:1982}, and \cite[pp. 139--140]{CpRF:2003}, for example.

Let $(M, \tilde{\omega})$ be an $n$-dimensional complete noncompact K\"ahler manifold. Suppose for some constant $T>0$ there is a smooth solution $\omega(x,t)>0$ for the evolution equation
\begin{equation} \label{eq:WXSA1}
   \left\{ \begin{aligned}
     \frac{\p}{\p t} g_{\alpha \bar{\beta}}(x, t) & = -  4 R_{\alpha \bar{\beta}}(x,t), &&  \textup{on $M \times [0, T]$}, \\
     g_{\alpha \bar{\beta}}(x,0) & = \tilde{g}_{\alpha \bar{\beta}}(x), && \quad x \in M,
   \end{aligned} \right.
\end{equation}
where $g_{\alpha\bar{\beta}}(x,t)$ and $\tilde{g}_{\alpha \bar{\beta}}$ are the metric components of $\omega(x,t)$ and $\tilde{\omega}$, respectively. Assume that the curvature $R_{\textup{m}} (x,t)= \{R_{\alpha \bar{\beta} \gamma \bar{\sigma}}(x,t)\}$ of $\omega(x,t)$ 
satisfies
\begin{equation} \label{eq:WXSA2}
    \sup_{M \times [0, T]} |R_{\textup{m}}(x,t)|^2 \le k_0
\end{equation}
for some constant $k_0 > 0$.
\begin{lemma} \label{le:mpcIII}
With the above assumption, suppose a smooth tensor $\{W_{\alpha \bar{\beta} \gamma \bar{\sigma}}(x, t)\}$ on $M$ with complex conjugation $\overline{W}_{\alpha\bar{\beta} \gamma \bar{\sigma}} (x,t) = W_{\beta \bar{\alpha} \sigma \bar{\gamma}}(x,t)$ satisfies
\begin{equation} \label{eq:dR/dt<LR}
   \Big(\frac{\p}{\p t} W_{\alpha \bar{\beta} \gamma \bar{\sigma}}\Big) \eta^{\alpha} \bar{\eta}^{\beta} \eta^{\gamma} \bar{\eta}^{\sigma}
   \le \big(\Delta W_{\alpha \bar{\beta} \gamma \bar{\sigma}}\big)\eta^{\alpha} \bar{\eta}^{\beta} \eta^{\gamma} \bar{\eta}^{\sigma} + C_1 |\eta|^4_{\omega(x, t)},
\end{equation}
for all $x\in M$, $\eta \in T'_x M$, $0 \le t \le T$, where $\Delta \equiv 2 g^{\alpha \bar{\beta}}(x,t)(\nabla_{\bar{\beta}} \nabla_{\alpha} + \nabla_{\alpha} \nabla_{\bar{\beta}})$ and $C_1$ is a constant. Let
\[
   h(x, t) = \max \Big\{W_{\alpha \bar{\beta} \gamma \bar{\sigma}} \eta^{\alpha} \bar{\eta}^{\beta} \eta^{\gamma} \bar{\eta}^{\sigma}; \eta \in T'_x M,  |\eta|_{\omega(x, t)} = 1 \Big\},
\]
for all $x \in M$ and $0 \le t \le T$. Suppose
\begin{align}
    \sup_{x \in M, 0 \le t \le T} | h(x, t)| & \le C_0, \label{eq:|h|C0}\\
    \sup_{x \in M} h(x, 0) & \le - \kappa, \label{eq:suph(x,0)}
\end{align}
for some constants $C_0 > 0$ and $\kappa$. Then,
\[
   h(x, t) \le (8 C_0 \sqrt{n k_0}  + C_1) t - \kappa.
\]
for all $x \in M$, $0 \le t \le T$.
\end{lemma}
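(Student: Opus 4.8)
The plan is to reduce the tensorial differential inequality \eqref{eq:dR/dt<LR} to a \emph{scalar} parabolic inequality for the (Lipschitz) function $h$, and then to run a barrier argument on the complete noncompact manifold in the spirit of \cite[p.~124, Lemma~4.7]{Shi:1997} and of the compact tensor maximum principle of \cite[Theorem~9.1]{Hamilton:1982}. Throughout, $\Delta$ is the Laplacian of $\omega(\cdot,t)$ as normalized in the statement.

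\emph{Step 1: a support-function computation (Hamilton's trick).} Fix $(x_1,t_1)\in M\times(0,T]$ and let $\eta_1\in T'_{x_1}M$, with $|\eta_1|_{\omega(x_1,t_1)}=1$, realize the maximum defining $h(x_1,t_1)$. Choose holomorphic coordinates centered at $x_1$ that are normal for $\omega(\cdot,t_1)$ at $x_1$, and extend $\eta_1$ to a local vector field with constant components $\eta_1^\alpha$. Set
\[
   \tilde F(x,t)=\frac{W_{\alpha\bar\beta\gamma\bar\sigma}(x,t)\,\eta_1^\alpha\bar\eta_1^\beta\eta_1^\gamma\bar\eta_1^\sigma}{|\eta_1|_{\omega(x,t)}^4},
\]
so that $\tilde F\le h$ everywhere with equality at $(x_1,t_1)$; thus $\tilde F$ is a smooth lower support function for $h$ there. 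Since $\partial_t g_{\alpha\bar\beta}=-4R_{\alpha\bar\beta}$ and, in the chosen coordinates, $\p_\alpha\p_{\bar\beta}g_{\gamma\bar\sigma}(x_1)=-R_{\gamma\bar\sigma\alpha\bar\beta}$, the scalar $|\eta_1|^2$ satisfies $\nabla|\eta_1|^2=0$ and $(\partial_t-\Delta)|\eta_1|^2=0$ at $(x_1,t_1)$, hence so does $|\eta_1|^{-4}$; therefore $(\partial_t-\Delta)\tilde F=(\partial_t-\Delta)\big(W_{\alpha\bar\beta\gamma\bar\sigma}\eta_1^\alpha\bar\eta_1^\beta\eta_1^\gamma\bar\eta_1^\sigma\big)$ at $(x_1,t_1)$, where the bracket is now treated as a scalar function. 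Commuting covariant derivatives with the Kähler curvature identities, the difference between $(\Delta W_{\alpha\bar\beta\gamma\bar\sigma})\eta_1^\alpha\bar\eta_1^\beta\eta_1^\gamma\bar\eta_1^\sigma$ and $\Delta$ of that scalar is a sum of contractions of $R_{\textup{m}}$ against $W$; applying the first-order optimality condition for the maximizing direction $\eta_1$ collapses all of them to $8\,h(x_1,t_1)\,R_{\alpha\bar\beta}\eta_1^\alpha\bar\eta_1^\beta$. Combining with \eqref{eq:dR/dt<LR}, the bound $|R_{\alpha\bar\beta}\eta_1^\alpha\bar\eta_1^\beta|\le\sqrt n\,|R_{\textup{m}}|\le\sqrt{nk_0}$ coming from \eqref{eq:WXSA2}, and $|h|\le C_0$, we obtain the pointwise estimate
\[
   (\partial_t-\Delta)\tilde F\,\big|_{(x_1,t_1)}\le C_1+8C_0\sqrt{nk_0}.
\]

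\emph{Step 2: the barrier argument.} Put $\phi(t)=(C_1+8C_0\sqrt{nk_0})\,t-\kappa$, so $\phi'\equiv C_1+8C_0\sqrt{nk_0}$ and $\phi(0)=-\kappa\ge\sup_M h(\cdot,0)$ by \eqref{eq:suph(x,0)}; it suffices to prove $v:=h-\phi\le 0$ on $M\times[0,T]$. Suppose $\sup v=m>0$. Fix $x_0\in M$. The curvature bound \eqref{eq:WXSA2} yields a uniform lower Ricci bound and, through \eqref{eq:WXSA1} and Gronwall, a uniform equivalence of the metrics $\omega(\cdot,t)$, $0\le t\le T$; hence by the Laplacian comparison theorem one constructs a smooth $\Phi\ge 0$ on $M\times[0,T]$, modelled on $e^{\lambda t}\big(d_{\omega(\cdot,0)}(\cdot,x_0)^2+1\big)$ with $\lambda$ large (Calabi's trick to cope with the cut locus, as in \cite[Lemma~4.7]{Shi:1997}), satisfying $\Phi(x,t)\to+\infty$ as $x\to\infty$ uniformly in $t$ and $(\partial_t-\Delta)\Phi>0$ everywhere. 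For small $\epsilon>0$, the function $v-\epsilon\Phi$ still has positive supremum, tends to $-\infty$ at spatial infinity, and is $\le 0$ on $\{t=0\}$; hence it attains a positive maximum at some $(x_\epsilon,t_\epsilon)$ with $t_\epsilon>0$. Applying Step~1 at $(x_\epsilon,t_\epsilon)$: the smooth function $\tilde F-\phi-\epsilon\Phi$ is a lower support function for $v-\epsilon\Phi$ and so is also maximal at $(x_\epsilon,t_\epsilon)$, whence $\partial_t(\tilde F-\phi-\epsilon\Phi)\ge 0$ and $\Delta(\tilde F-\phi-\epsilon\Phi)\le 0$ there. Therefore
\[
   0\le(\partial_t-\Delta)(\tilde F-\phi-\epsilon\Phi)\big|_{(x_\epsilon,t_\epsilon)}\le\big(C_1+8C_0\sqrt{nk_0}\big)-\phi'-\epsilon(\partial_t-\Delta)\Phi=-\epsilon(\partial_t-\Delta)\Phi<0,
\]
a contradiction. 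Hence $m\le0$ and $h(x,t)\le\phi(t)=(8C_0\sqrt{nk_0}+C_1)t-\kappa$, as claimed.

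I expect the main obstacle to be Step~2 --- the construction of the space--time barrier $\Phi$ that exhausts $M$ uniformly in $t\in[0,T]$ while keeping $(\partial_t-\Delta)\Phi>0$ using only the $C^0$ curvature bound \eqref{eq:WXSA2}; this is exactly where completeness of $(M,\omega(\cdot,t))$, the uniform equivalence of the evolving metrics, and the Laplacian comparison theorem enter, and it follows the scheme of \cite[Lemma~4.7]{Shi:1997}. A secondary technical point is the bookkeeping in Step~1 that reduces the several curvature--times--$W$ contractions to the single term $8\,h(x_\epsilon,t_\epsilon)\,R_{\alpha\bar\beta}\eta^\alpha\bar\eta^\beta$ via the Euler--Lagrange equation at the maximizing direction; this is precisely what produces the constant $8C_0\sqrt{nk_0}$.
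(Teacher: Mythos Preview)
Your proposal is correct and follows the same overall strategy as the paper: reduce the tensorial inequality to a scalar one via a smooth extension of the maximizing direction, then run a barrier/maximum-principle argument on the noncompact manifold. Two technical differences are worth noting. First, the paper extends $\eta_*$ by \emph{parallel transport} along radial geodesics (with respect to $\omega(\cdot,t_*)$, time-independent); this yields $\nabla\eta=0$ and $\Delta\eta=0$ at the point, so that $(\Delta W)\eta\bar\eta\eta\bar\eta=\Delta f$ with no curvature commutator, and the term $8\,h\,R_{\alpha\bar\beta}\eta^\alpha\bar\eta^\beta$ arises solely from $\partial_t|\eta|^{-4}$ via $\partial_t g=-4\,\textup{Ric}$. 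Your constant-component extension in normal coordinates trades this for the Euler--Lagrange condition at the maximizing direction; this is a legitimate alternative and the curvature contractions do collapse exactly as you claim (pairing the $\alpha,\gamma$ terms and the $\bar\beta,\bar\sigma$ terms and using the conjugation symmetry of $W$), but it costs the extra bookkeeping you flag. Second, for the barrier the paper invokes Shi's pre-built decaying weight $\theta$ satisfying $\partial_t\theta-\Delta\theta+2\theta^{-1}|\nabla\theta|^2\le-\theta$ and $\theta\asymp(1+d_0)^{-1}$, and argues by contradiction that $(h-Ct+\kappa)\theta$ attains a positive interior maximum; your additive barrier $\epsilon\Phi$ with $\Phi\sim e^{\lambda t}(d_0^2+1)$ achieves the same localization from scratch. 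Either route works; the paper's is slightly shorter because $\theta$ is quoted as a black box and the parallel extension sidesteps the optimality computation entirely.
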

\begin{proof}
We prove by contradiction. Denote
\begin{equation} \label{eq:defC}
   C = 8 C_0 \sqrt{n k_0} + C_1 > 0.
\end{equation}
Suppose 
\begin{equation} \label{eq:hx1t1}
   h(x_1, t_1) - C t_1 + \kappa > 0
\end{equation}
for some $(x_1, t_1) \in M \times [0, T]$. Then, by \eqref{eq:suph(x,0)} we have $t_1 > 0$.

Under the above conditions \eqref{eq:WXSA1} and \eqref{eq:WXSA2}, by \cite[p. 124, Lemma 4.6]{Shi:1997}, there exists a function $\theta(x, t) \in C^{\infty}(M \times [0, T])$ satisfying that
 \begin{align}
   0 < \theta (x, t) & \le 1, && \textup{on $M \times [0, T]$}, \label{eq:WXS1}\\
   \frac{\p \theta}{\p t}  - \Delta_{\omega(x, t)} \theta + 2 \theta^{-1}  |\nabla \theta|_{\omega(x,t)}^2 & \le - \theta, &&\textup{on $M \times [0, T]$},  \label{eq:WXS2}\\
   \frac{C_2^{-1}}{1 + d_0 (x_0, x)} \le \theta(x, t) & \le \frac{C_2}{1 + d_0 (x_0, x)}, && \textup{on $M \times [0, T]$}. \label{eq:WXS3}
\end{align}
where $x_0$ is a fixed point in $M$, $d_0(x,y)$ is the geodesic distance between $x$ and $y$ with respect to $\omega(x,0)$, and $C_2>0$ is a constant depends only on $n$, $k_0$, and $T$. 

Let
\[
    m_0 = \sup_{x \in M, 0 \le t \le T} \Big( \big[ h(x,t) - C t  + \kappa \big] \theta (x, t) \Big).
\]
Then, $0 < m_0 \le C_0 + |\kappa|$, by \eqref{eq:hx1t1} and \eqref{eq:WXS1}. Denote
\[
   \Lambda = \frac{2 C_2 (C_0 + C T + |\kappa|)}{m_0} > 0. 
\]
Then, for any $x \in M$ with $d_0 (x, x_0) \ge \Lambda$, 
\[
   \Big| ( h(x, t) - C t + \kappa) \theta (x, t) \Big| \le \frac{C_2 (C_0 + C T + |\kappa|)}{1 + d_0 (x, x_0)} \le \frac{m_0}{2}.
\]
It follows that the function $(h - C t + \kappa) \theta$ must attain its supremum $m_0$ on the compact set $\overline{B(x_0; \Lambda)} \times [0, T]$, where $\overline{B(x_0; r)}$ denotes the closure of the geodesic ball with respect to $\omega(x,0)$ centered at $x_0$ of radius $r$. 
Let 
\[
    f(x, \eta, t) = \frac{W_{\alpha \bar{\beta} \gamma \bar{\sigma}}  \eta^{\alpha} \bar{\eta}^{\beta} \eta^{\gamma} \bar{\eta}^{\sigma}}{|\eta|_{\omega(x,t)}^4} - C t + \kappa,
\]
for all $(x, t) \in M \times [0, T]$, $\eta \in T'_x M \setminus \{0\}$.
Then, there exists a point $(x_*, \eta_*, t_*)$ with $x_* \in \overline{B(x_0; \Lambda)}$, $0 \le t_* \le T$, $\eta_* \in T'_{x_*} M$ and $|\eta_*|_{\omega(x_*, t_*)} = 1$, such that 
\[
    m_0 = f(x_*, \eta_*, t_*) \theta (x_*, t_*) = \max_{\mathcal{S}_t \times [0, T]} (f \theta),
\]
and $t_* > 0$ by \eqref{eq:suph(x,0)}, where $\mathcal{S}_t = \{(x, \eta) \in T' M ; x \in M, \eta \in T'_x M, |\eta|_{\omega(x,t)} = 1\}$.

We now employ a standard process to extend $\eta_*$ to a smooth vector field, denoted by $\eta$ with slightly abuse of notation, in a neighborhood of $(x_*, t_*)$ in $M \times [0, T]$ such that $\eta$ is nowhere vanishing on the neighborhood, and 
\begin{equation} \label{eq:deta*}
   \frac{\p}{\p t} \eta = 0, \quad \nabla \eta = 0, \quad \Delta \eta =  0, \quad \textup{at $(x_*, t_*)$}.
\end{equation}
This extension can be done, for example, by parallel transporting $\eta_*$ from $x_*$ to each point $y$ in a small geodesic ball centered at $x_*$, with respect to metric $\omega(\cdot, t_*)$, along the unique minimal geodesic joining $x_*$ to $y$; this extension is made independent of $t$ and so $\p \eta / \p t \equiv 0$ in the geodesic ball. 

Since $f(x, \eta(x), t)$ is smooth in a neighborhood of $(x_*, t_*)$, we can differentiate $f$ and evaluate the derivatives at the point $(x_*, t_*)$ to obtain
\begin{align*}
    \frac{\p}{\p t} f 
    & = \Big( \frac{\p}{\p t} W_{\alpha \bar{\beta} \gamma \bar{\sigma}} \Big) \eta^{\alpha} \bar{\eta}^{\beta} \eta^{\gamma} \bar{\eta}^{\sigma} + 8 \Big(W_{\alpha \bar{\beta} \gamma \bar{\sigma}} \eta^{\alpha} \bar{\eta}^{\beta} \eta^{\gamma} \bar{\eta}^{\sigma}\Big)  \Big( R_{\alpha \bar{\beta}} \eta^{\alpha} \bar{\eta}^{\beta}\Big)  - C \\
    & \le \Big( \frac{\p}{\p t} W_{\alpha \bar{\beta} \gamma \bar{\sigma}} \Big) \eta^{\alpha} \bar{\eta}^{\beta} \eta^{\gamma} \bar{\eta}^{\sigma} + 8 C_0 \sqrt{n k_0} - C  \quad \textup{\big(by \eqref{eq:|h|C0} and \eqref{eq:WXSA2}\big)}\\
    & \le  \Delta f + C_1 +  8 C_0 \sqrt{n k_0} - C \quad \textup{\big(by \eqref{eq:dR/dt<LR} and \eqref{eq:deta*}\big)}\\
    & \le \Delta f, \quad \textup{by \eqref{eq:defC}}.
\end{align*}
Since $f \theta = f(x, \eta(x), t) \theta (x, t)$ attains its maximum at $(x_*, t_*)$, we have
\begin{equation} \label{eq:dftheta*}
   \frac{\p}{\p t} (f \theta) \ge 0, \quad \nabla (f\theta) = 0, \quad \Delta (f\theta) \le 0, \quad \textup{at $(x_*, t_*)$}.
\end{equation}
It follows that, at the point $(x_*, t_*)$,
\begin{align*}
  0 \le \frac{\p}{\p t} (f \theta) 
  & = \theta \frac{\p}{\p t} f + f \frac{\p}{\p t} \theta \\
  & \le \theta \Delta f  + f \frac{\p}{\p t} \theta \\
  & = \Delta (f \theta) - 2 \theta^{-1} \nabla \theta \cdot \nabla (f\theta)  + f \Big[ \frac{\p \theta}{\p t} - \Delta \theta + 2 \theta^{-1} |\nabla \theta|^2 \Big] \\
  & \le - f \theta \quad \textup{\big(by \eqref{eq:dftheta*} and \eqref{eq:WXS2}\big)} \\
  & = - m_0 < 0.
\end{align*}
This yields a contradiction. The proof is therefore completed.
\end{proof}
In the proof of Lemma~\ref{le:Shi}, we apply Lemma~\ref{le:mpcIII} with $W_{\alpha \bar{\beta} \gamma \bar{\sigma}} = R_{\alpha \bar{\beta} \gamma \bar{\sigma}}$ to estimate the holomorphic sectional curvature. For the Riemannian sectional curvature, we apply the similar result given below with $W_{ijkl} = R_{ijkl}$.
\begin{lemma} \label{le:mpcK}
Assume \eqref{eq:WXSA1} and \eqref{eq:WXSA2}. Suppose a smooth real tensor $\{W_{ijkl}(x, t)\}$ on $M$  satisfies
\[
   \Big(\frac{\p}{\p t} W_{ijkl}\Big) v^i w^j w^k v^l
   \le \big(\Delta W_{ijkl}\big) v^i w^j w^k v^l + C_1 |v|^2_{\omega(x, t)} |w|^2_{\omega(x,t)},
\]
for all $x \in M$, $v, w \in T_{\mathbb{R}, x} M$, where $\Delta \equiv g^{ij}(x,t) \nabla_i \nabla_j$ and $C_1>0$ is a constant.
Let
\[
   k(x, t) = \max \Big\{W_{ijkl} v^i w^j w^k v^l; v, w \in T_{\mathbb{R}, x} M, |v \wedge w|_{g(x, t)} = 1 \Big\},
\]
for all $x \in M$ and $0 \le t \le T$. Suppose
\begin{align*}
    \sup_{x \in M, 0 \le t \le T} | k(x, t)| & \le C_0, \\
    \sup_{x \in M} k(x, 0) \le - \kappa, 
\end{align*}
for some constants $C_0 > 0$ and $\kappa$. Then,
\[
   k(x, t) \le (8 C_0 \sqrt{n k_0} + C_1) t - \kappa.
\]
for all $x \in M$, $0 \le t \le T$. \qed
\end{lemma}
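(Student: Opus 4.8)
The plan is to transcribe the proof of Lemma~\ref{le:mpcIII} almost word for word, replacing the Hermitian quantity $W_{\alpha\bar\beta\gamma\bar\sigma}\eta^\alpha\bar\eta^\beta\eta^\gamma\bar\eta^\sigma$ by the real quantity $W_{ijkl}v^iw^jw^kv^l$ and the normalization $|\eta|_{\omega(x,t)}^4$ by $|v\wedge w|_{g(x,t)}^2$. First I would argue by contradiction: put $C=8C_0\sqrt{nk_0}+C_1$ and assume $k(x_1,t_1)-Ct_1+\kappa>0$ for some $(x_1,t_1)\in M\times[0,T]$; the initial bound $\sup_M k(\cdot,0)\le-\kappa$ forces $t_1>0$. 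Exactly as before I would invoke Shi's cutoff function $\theta\in C^\infty(M\times[0,T])$ from \cite[p.~124, Lemma~4.6]{Shi:1997} obeying \eqref{eq:WXS1}--\eqref{eq:WXS3}, set $m_0=\sup_{M\times[0,T]}\big([k-Ct+\kappa]\theta\big)\in(0,C_0+|\kappa|]$, and use the decay of $\theta$ to pin the supremum of $(k-Ct+\kappa)\theta$ to a compact set $\overline{B(x_0;\Lambda)}\times[0,T]$ for a suitable $\Lambda>0$.

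Next I would pass to the smooth function $f(x,v,w,t)=W_{ijkl}v^iw^jw^kv^l/|v\wedge w|_{g(x,t)}^2-Ct+\kappa$, defined for all linearly independent pairs $v,w$, and choose $(x_*,v_*,w_*,t_*)$ realizing $m_0=f(x_*,v_*,w_*,t_*)\theta(x_*,t_*)$; the second description of $k$ lets me take $v_*,w_*$ to be $g(x_*,t_*)$-orthonormal, and $t_*>0$ again. Then I would extend $v_*,w_*$ to $t$-independent vector fields near $(x_*,t_*)$ by parallel transporting along minimal $g(\cdot,t_*)$-geodesics from $x_*$, so that at $(x_*,t_*)$ one has $\partial_tv=\partial_tw=0$, $\nabla v=\nabla w=0$, $\Delta v=\Delta w=0$; since also $\nabla g=0$, these force $\nabla|v\wedge w|^2=0$ and $\Delta|v\wedge w|^2=0$ there, hence $\Delta f=(\Delta W_{ijkl})v^iw^jw^kv^l$ at $(x_*,t_*)$.

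The only place where the computation genuinely differs is the time derivative of the normalization: since $\partial_tg_{ij}=-2R_{ij}$ and $v_*,w_*$ are orthonormal, at $(x_*,t_*)$ one gets $\partial_t|v\wedge w|^2=-2R_{ij}(v^iv^j+w^iw^j)$, so
\[
   \frac{\p}{\p t}f=\Big(\frac{\p}{\p t}W_{ijkl}\Big)v^iw^jw^kv^l+2\big(W_{ijkl}v^iw^jw^kv^l\big)R_{ij}(v^iv^j+w^iw^j)-C
\]
there. Bounding $|R_{ij}|\le\sqrt{n}\,|R_{\textup m}|\le\sqrt{nk_0}$ and $|W_{ijkl}v^iw^jw^kv^l|=|k(x_*,t_*)|\le C_0$, the middle term is $\le 4C_0\sqrt{nk_0}\le 8C_0\sqrt{nk_0}$, so the hypothesis on $W$ (applied with $|v|=|w|=1$) yields $\partial_tf\le\Delta f+C_1+8C_0\sqrt{nk_0}-C=\Delta f$. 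Finally, since $f\theta$ attains its maximum $m_0>0$ at $(x_*,t_*)$ with $t_*>0$, we have $\partial_t(f\theta)\ge0$, $\nabla(f\theta)=0$, $\Delta(f\theta)\le0$ there, and the same algebraic rearrangement as in Lemma~\ref{le:mpcIII} combined with \eqref{eq:WXS2} gives $0\le\partial_t(f\theta)\le-f\theta=-m_0<0$, a contradiction.

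I do not expect a new obstacle here; the work is entirely in the bookkeeping. The two points to be careful about are (i) using the alternative formula for $k$ so that $v_*,w_*$ may be taken orthonormal, which makes $\partial_t|v\wedge w|^2$ clean, and (ii) checking that the extra Ricci-type term stays inside the constant $8C_0\sqrt{nk_0}$ — which it does, with room to spare (only $4C_0\sqrt{nk_0}$ is needed). Everything else transfers mechanically from the proof of Lemma~\ref{le:mpcIII}.
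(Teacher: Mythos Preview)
Your proposal is correct and is exactly the argument the paper has in mind: the paper gives no separate proof of Lemma~\ref{le:mpcK} (it ends with \qed), indicating that one should rerun the proof of Lemma~\ref{le:mpcIII} with the obvious real/two-vector substitutions, which is precisely what you do. Your handling of the only nontrivial change---the time derivative of the normalization $|v\wedge w|_{g(x,t)}^2$ under $\partial_t g_{ij}=-2R_{ij}$---is correct, and the resulting Ricci term indeed fits inside $8C_0\sqrt{nk_0}$; note that your appeal to the ``second description'' of $k$ (so that $v_*,w_*$ may be taken orthonormal) is exactly the identity the paper records just before stating Lemma~\ref{le:mpcK}, and it is valid because in the intended application $W_{ijkl}=R_{ijkl}$ has the curvature symmetries making $W_{ijkl}v^iw^jw^kv^l$ depend only on the $2$-plane $v\wedge w$.
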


\end{appendices}

\section{Kobayashi-Royden metric and holomorphic curvature}
The \emph{Kobayashi-Royden pseudometric}, denoted by $\mathfrak{K}$, is the infinitesimal form of the Kobayashi pseudodistance. Let us first recall the definition (see, for example, \cite{Royden:1970} or \cite[Section 3.5]{Kobayashi:1998}). 

Let $M$ be a complex manifold and $T'M$ be its holomorphic tangent bundle. Define $\mathfrak{K}_M: T'M \to [0, +\infty)$ as below: For any $(x, \xi) \in T'M$, 
\[
   \mathfrak{K}_M (x, \xi) = \inf_{R>0} \frac{1}{R},
\]
where $R$ ranges over all positive numbers for which there is a $\phi \in \Hol (\mathbb{D}_R, M)$ with $\phi(0) = x$ and $\phi_*(\p / \p z |_{z =0}) \equiv d\phi(\p / \p z |_{z =0}) =  \xi$. 
Here $\Hol(X, Y)$ denotes the set of holomorphic maps from $X$ to $Y$, and 
\[
   \mathbb{D}_R \equiv \{z \in \mathbb{C}; |z| < R\}, \quad \textup{and $\mathbb{D} \equiv \mathbb{D}_1$}.
\]
Equivalently, one can verify that (cf. \cite[p. 82]{Greene-Wu:1979}), for each $(x, \xi) \in T'M$,
\begin{align}
   \mathfrak{K}_M(x, \xi) 
   & = \inf \{ |V|_{\mathcal{P}} ; V \in T' \mathbb{D}, \; \textup{there is $f \in \Hol(\mathbb{D},M)$ with $f_*(V) = \xi$} \} \notag \\
   \begin{split} 
   & = \inf \{ |V|_{\mathbb{C}}; V \in T'_0 \mathbb{D}, \; \textup{there is $f \in \Hol(\mathbb{D}, M)$ such that} \\
   & \qquad \qquad \quad f(0) = x, f_*(V) = \xi \}. 
   \end{split} \label{eq:GWF_M} 
\end{align}
Here $|\cdot |_{\mathcal{P}}$ and $|\cdot |_{\mathbb{C}}$ are, respectively, the norms with respect to the Poincar\'e metric $\omega_{\mathcal{P}} = \sqrt{-1}(1-|z|^2)^{-2} dz \wedge d\bar{z}$ and Euclidean metric $\omega_{\mathbb{C}} = \sqrt{-1} dz \wedge d\bar{z}$.

The following decreasing property of $\mathfrak{K}_M$ follows immediately from definition.
\begin{prop}[{\cite[Proposition 1]{Royden:1970}}] \label{pr:FMde}
Let $M$ and $N$ be complex manifolds and $\Psi: M \to N$ be a holomorphic map. Then,
\[
   (\Psi^* \mathfrak{K}_N) (x, \xi) \equiv \mathfrak{K}_N (\Psi(x), \Psi_*(\xi)) \le \mathfrak{K}_M (x, \xi)
\]
for all $(x, \xi) \in T'M$. In particular, if $\Psi: M \to N$ is biholomorphism then the equality holds; if $M$ is a complex submanifold of $N$ then
\[
   \mathfrak{K}_N (x, \xi) \le \mathfrak{K}_M (x, \xi).
\]
\end{prop}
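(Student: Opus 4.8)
The plan is to argue directly from the infimum definition of $\mathfrak{K}$ over holomorphic disks, exploiting only the fact that holomorphic maps are closed under composition. Fix $(x,\xi)\in T'M$. Recall that
\[
  \mathfrak{K}_M(x,\xi)=\inf_{R>0}\frac{1}{R},
\]
where $R$ ranges over those radii for which there exists $\phi\in\Hol(\mathbb{D}_R,M)$ with $\phi(0)=x$ and $\phi_*(\p/\p z|_{z=0})=\xi$; call such an $R$ \emph{admissible for} $(M,x,\xi)$. The whole content is to show that every $R$ admissible for $(M,x,\xi)$ is also admissible for $(N,\Psi(x),\Psi_*(\xi))$.

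So, given $R$ admissible for $(M,x,\xi)$, witnessed by some $\phi\in\Hol(\mathbb{D}_R,M)$, I would set $\widetilde{\phi}:=\Psi\circ\phi$. Since $\Psi$ is holomorphic and $\phi\in\Hol(\mathbb{D}_R,M)$, we get $\widetilde{\phi}\in\Hol(\mathbb{D}_R,N)$; moreover $\widetilde{\phi}(0)=\Psi(x)$, and by the chain rule
\[
  \widetilde{\phi}_*\bigl(\tfrac{\p}{\p z}\big|_{z=0}\bigr)=\Psi_*\bigl(\phi_*\bigl(\tfrac{\p}{\p z}\big|_{z=0}\bigr)\bigr)=\Psi_*(\xi).
\]
Hence $R$ is admissible for $(N,\Psi(x),\Psi_*(\xi))$, so the set of admissible radii for the latter contains that for $(M,x,\xi)$, and the infimum of $1/R$ over the larger set is no larger; this yields $\mathfrak{K}_N(\Psi(x),\Psi_*(\xi))\le\mathfrak{K}_M(x,\xi)$. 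With the convention $\inf\emptyset=+\infty$ the degenerate cases are covered automatically. I do not anticipate a genuine obstacle here: the only thing being used is that $\Hol(\cdot,\cdot)$ is stable under composition and that the differential of a composition factors as a composition of differentials, which is precisely why $\mathfrak{K}$ is intrinsic.

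For the two consequences: if $\Psi$ is a biholomorphism, I would apply the inequality once to $\Psi$ at $(x,\xi)$ and once to $\Psi^{-1}$ at $(\Psi(x),\Psi_*(\xi))$, using $(\Psi^{-1})_*\circ\Psi_*=\mathrm{id}$, obtaining both inequalities and hence equality. If $M$ is a complex submanifold of $N$, I would apply the inequality to the inclusion $\iota\colon M\hookrightarrow N$, which is holomorphic and satisfies $\iota_*(\xi)=\xi$ under the natural identification $T'_xM\subset T'_xN$, giving $\mathfrak{K}_N(x,\xi)\le\mathfrak{K}_M(x,\xi)$. One could equally well run the same argument starting from the equivalent formulation \eqref{eq:GWF_M}, composing $f\in\Hol(\mathbb{D},M)$ with $\Psi$; the bookkeeping is identical.
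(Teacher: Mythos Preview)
Your argument is correct and is exactly the unpacking of the paper's one-line remark that the decreasing property ``follows immediately from definition''; the paper does not spell out a proof beyond that. The composition-of-disks reasoning, together with the biholomorphism and inclusion consequences, is precisely what is intended.
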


\begin{ex} \label{ex:F_Dm}
Let $M$ be the open ball $B(r) = \{z \in \mathbb{C}^n ; |z| < r\}$. Then,
\begin{equation} \label{eq:FMBr}
   \mathfrak{K}_{B(r)}(a, \xi) = \bigg[ \frac{|\xi|_{\mathbb{C}^n}^2}{r^2 - |a|^2} + \frac{|\xi \cdot a|_{\mathbb{C}^n}^2}{(r^2 - |a|^2)^2}\bigg]^{1/2}, 
\end{equation}
for all $a \in \mathbb{B}_r^n$ and $\xi \in T'_a \mathbb{B}^n_r$; see \cite[p. 43, Corollary 2.3.5]{Jarnicki-Pflug:2013} for example. \qed
\end{ex}
The result below is well-known. We include a proof here for completeness.
\begin{lemma} \label{le:KR-left}
Let $(M, \omega)$ be a hermitian manifold such that the holomorphic sectional curvature $H(\omega) \le - \kappa < 0$. Then,
\[
    \mathfrak{K}_M(x, \xi) \ge \sqrt{\frac{2}{\kappa}} \, | \xi |_{\omega} \quad \textup{for each $x \in M$, $\xi \in T'_xM$}.
\]
%In particular, the pseudometric $\mathfrak{K}_M$ is a metric on $M$.
\end{lemma}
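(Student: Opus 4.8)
The plan is to obtain this directly from the definition of $\mathfrak{K}_M$ via the classical Ahlfors--Schwarz lemma. Fix $(x,\xi)\in T'M$; one may assume $\xi\neq 0$, the inequality being trivial otherwise, and it is enough to show that whenever $R>0$ and $\phi\in\Hol(\mathbb{D}_R,M)$ satisfy $\phi(0)=x$ and $\phi_*(\p/\p z|_{z=0})=\xi$, one has $1/R\ge\sqrt{2/\kappa}\,|\xi|_\omega$; taking the infimum over all admissible $R$ then gives the assertion (if no such $\phi$ exists for any $R$, then $\mathfrak{K}_M(x,\xi)=+\infty$ and there is nothing to prove).

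First I would pull back the metric and extract a differential inequality. Write $\phi^*\omega=\sqrt{-1}\,u(z)\,dz\wedge d\bar z$ with $u\geq 0$ smooth on $\mathbb{D}_R$; then $u$ vanishes only at the isolated zeros of $\p\phi/\p z$, and $u(0)$ is a fixed universal multiple of $|\xi|_\omega^2$. On the open set $\{u>0\}$ the form $\phi^*\omega$ is a genuine Hermitian (hence K\"ahler) metric on a piece of Riemann surface, of Gaussian curvature $-u^{-1}\p_z\p_{\bar z}\log u$; by the curvature-decreasing property of holomorphic immersions, valid for the Hermitian notion of holomorphic sectional curvature in force here, this is at most the holomorphic sectional curvature of $(M,\omega)$ in the direction $\phi_*(\p/\p z)$, hence at most $-\kappa$. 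Therefore $\p_z\p_{\bar z}\log u\ge\kappa u$ on $\{u>0\}$.

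Next I would compare $u$ with the Poincar\'e density $u_\kappa$ on $\mathbb{D}_R$ of constant holomorphic sectional curvature $-\kappa$ --- a fixed multiple of $R^2(R^2-|z|^2)^{-2}$, satisfying $\p_z\p_{\bar z}\log u_\kappa=\kappa u_\kappa$ and blowing up at $\p\mathbb{D}_R$ --- and show $u\le u_\kappa$. For this, on each $\mathbb{D}_\rho$ with $\rho<R$ the quotient $u/u_{\kappa,\rho}$ (with $u_{\kappa,\rho}$ the analogous density on $\mathbb{D}_\rho$) is bounded on $\overline{\mathbb{D}_\rho}$ and tends to $0$ at $\p\mathbb{D}_\rho$, hence attains an interior maximum at a point where $u>0$; there $0\ge\p_z\p_{\bar z}\log(u/u_{\kappa,\rho})\ge\kappa\,u_{\kappa,\rho}\,(u/u_{\kappa,\rho}-1)$, forcing $u\le u_{\kappa,\rho}$, and letting $\rho\uparrow R$ yields $u\le u_\kappa$ on all of $\mathbb{D}_R$. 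Evaluating at the origin bounds $|\xi|_\omega^2$ by the corresponding multiple of $u_\kappa(0)$, which is a fixed multiple of $R^{-2}$; keeping track of the normalizing constants gives $1/R\ge\sqrt{2/\kappa}\,|\xi|_\omega$ for every admissible $R$, as needed.

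I expect no genuine obstacle here, this being the standard Schwarz--Pick--Ahlfors argument; the points that require care are (i) the degeneracy of $\phi^*\omega$ at the branch points of $\phi$, handled by the pseudometric version of the maximum-principle step above, using that these points are isolated unless $\phi$ is constant (in which case $\xi=0$), and (ii) the bookkeeping of conventions --- the factor relating $u(0)$ to $|\xi|_\omega^2$, the normalization of $\omega_{\mathcal{P}}$ entering the definition of $\mathfrak{K}_M$, and the normalization of the holomorphic sectional curvature --- which together pin down the constant in the final estimate.
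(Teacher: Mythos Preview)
Your proposal is correct and follows essentially the same approach as the paper: both bound the pullback $\phi^*\omega$ by the Poincar\'e metric via a Schwarz lemma and then evaluate at the origin. The only difference is that you re-derive the Ahlfors--Schwarz inequality by hand (with the standard exhaustion and maximum-principle argument), whereas the paper simply cites Yau's Schwarz lemma \cite[p.~201, Theorem~$2'$]{Yau:1978:schwarz} to obtain $\psi^*\omega \le \kappa^{-1}\omega_{\mathcal{P}}$ on $\mathbb{D}$ directly.
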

\begin{proof}
Let $\psi \in \Hol(\mathbb{D}, M)$ such that $\psi(0) = x$ and $\psi_*(v) = \xi$.
It follows from the second author's Schwarz Lemma~\cite[p. 201, Theorem $2'$]{Yau:1978:schwarz} that
\[
    \psi^*\omega \le \frac{1}{\kappa} \omega_{\mathcal{P}} \quad \textup{on $\mathbb{D}$},
\]
where $\omega_{\mathcal{P}} = (\sqrt{-1}/2) 2 (1-|z|^2)^{-2} dz \wedge d\bar{z}$. It follows that
\begin{align*}
   |\xi|^2_{\omega} 
    = \omega(x; \xi) = (\psi^*\omega)(0; v) 
    \le \frac{1}{\kappa} \omega_{\mathcal{P}}(0; v) = \frac{2}{\kappa} |v|^2_{\mathbb{C}}.
\end{align*}
Hence, $|v|_{\mathbb{C}} \ge \sqrt{2/\kappa} \, |\xi|_{\omega}$. By \eqref{eq:GWF_M}, we obtain $\mathfrak{K}_M(x, \xi) \ge \sqrt{2/\kappa} \, |\xi|_{\omega}$.
\end{proof}
The quasi-bounded geometry is essential in the following estimate.
\begin{lemma} \label{le:WY-KR}
Suppose a complete K\"ahler manifold $(M, \omega)$ has quasi-bounded geometry. Then, the Kobayashi-Royden pseudometric $\mathfrak{K}$ satisfies
\[
   \mathfrak{K}_M(x, \xi) \le C | \xi |_{\omega}, \quad \textup{for all $x \in M, \xi \in T'_x M$},
\]
where $C$ depends only on the radius of quasi-bounded geometry of $(M, \omega)$.
\end{lemma}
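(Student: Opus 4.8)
The plan is to use the quasi-coordinate charts to reduce the estimate on $M$ to the known formula for the Kobayashi--Royden metric on a Euclidean ball, exploiting the decreasing property (Proposition~\ref{pr:FMde}). Fix a radius of quasi-bounded geometry $r_1>0$ for $(M,\omega)$. Given $(x,\xi) \in T'M$, choose $P = x$ and let $\psi: U \to M$ be a quasi-coordinate map with $\psi(0)=x$, where $B_{\mathbb{C}^n}(0;r_1) \subset U \subset B_{\mathbb{C}^n}(0;r_2)$. Since $\psi$ is a nonsingular holomorphic map, on the smaller ball $B_{\mathbb{C}^n}(0;r_1)$ it is a holomorphic map into $M$ sending $0$ to $x$, so by Proposition~\ref{pr:FMde} (the decreasing property applied to $\psi$) together with the scaling behaviour of $\mathfrak{K}$ under the dilation $z \mapsto r_1 z$, which identifies $\mathfrak{K}_{B_{\mathbb{C}^n}(0;r_1)}$ with $\mathfrak{K}_{B_{\mathbb{C}^n}(0;1)}$ rescaled, we get
\[
   \mathfrak{K}_M(x,\xi) \le \mathfrak{K}_{B_{\mathbb{C}^n}(0;r_1)}(0, \eta),
\]
where $\eta \in T'_0 B_{\mathbb{C}^n}(0;r_1)$ is the unique vector with $\psi_*(\eta) = \xi$, i.e. $d\psi_0(\eta) = \xi$.

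Next I would estimate $|\eta|_{\mathbb{C}^n}$ in terms of $|\xi|_\omega$. By property (ii) of Definition~\ref{de:qbg}, $\psi^*\omega \ge C^{-1}\omega_{\mathbb{C}^n}$ on $U$, so at the origin $|\xi|_\omega^2 = (\psi^*\omega)(0;\eta) \ge C^{-1}|\eta|_{\mathbb{C}^n}^2$, hence $|\eta|_{\mathbb{C}^n} \le C^{1/2}|\xi|_\omega$. Then I invoke Example~\ref{ex:F_Dm} with $a=0$: from \eqref{eq:FMBr},
\[
   \mathfrak{K}_{B_{\mathbb{C}^n}(0;r_1)}(0,\eta) = \frac{|\eta|_{\mathbb{C}^n}}{r_1}.
\]
Combining the three displays gives $\mathfrak{K}_M(x,\xi) \le r_1^{-1} C^{1/2} |\xi|_\omega$, with the constant depending only on $r_1$ (through $r_1$ itself and through the constant $C = C(r_1,r_2,n)$ in \eqref{eq:simEu}, and since in our application $r_2$ is controlled by $r_1$, effectively only on $r_1$ and $n$). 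Since $x$ and $\xi$ were arbitrary, this is the claimed bound.

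I expect no serious obstacle here; the argument is essentially a packaging of standard facts. The one point requiring a little care is the scaling step: one must check that the dilation $\lambda_{r_1}: B_{\mathbb{C}^n}(0;1) \to B_{\mathbb{C}^n}(0;r_1)$, $z \mapsto r_1 z$, is a biholomorphism so that $\mathfrak{K}$ transforms exactly, and track the factor $r_1$ correctly—this is immediate from Proposition~\ref{pr:FMde} applied to $\lambda_{r_1}$ and $\lambda_{r_1}^{-1}$, or one can simply cite the closed form in Example~\ref{ex:F_Dm} directly for the ball of radius $r_1$ and avoid scaling altogether. A second minor point is that $\psi$ need only be defined and nonsingular on $U \supset B_{\mathbb{C}^n}(0;r_1)$, not on the closed ball, but since the infimum defining $\mathfrak{K}$ over disks $\mathbb{D}_R \to B_{\mathbb{C}^n}(0;r_1)$ already stays inside the open ball, restricting $\psi$ to the open ball $B_{\mathbb{C}^n}(0;r_1)$ is harmless. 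Everything else—nonsingularity of $d\psi_0$, the metric comparison at a single point—is built into Definition~\ref{de:qbg}.
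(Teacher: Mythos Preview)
Your proof is correct and follows essentially the same route as the paper's: apply the decreasing property of the Kobayashi--Royden metric to the quasi-coordinate map $\psi$ restricted to the inner ball, invoke the explicit formula \eqref{eq:FMBr} at the origin, and convert $|\eta|_{\mathbb{C}^n}$ to $|\xi|_\omega$ via \eqref{eq:simEu}. The only cosmetic difference is that the paper inserts the intermediate image $U=\psi(B(r))$ and writes $\mathfrak{K}_M \le \mathfrak{K}_U \le \mathfrak{K}_{B(r)}$, whereas you apply the decreasing property to $\psi:B_{\mathbb{C}^n}(0;r_1)\to M$ in one step; your version is marginally cleaner.
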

\begin{proof}
Let $(\psi, B (r))$ be the quasi-coordinate chart of $(M, \omega)$ centered at $x$; that is, $B(r) = \{ z \in \mathbb{C}^n; |z|< r\}$ and $\psi: B(r) \to M$ is nonsingular holomorphic map such that $\psi(0) = x$. Denote $U = \psi(B(r))$. 
Then, by Proposition~\ref{pr:FMde}, 
\[
   \mathfrak{K}_M(x, \xi) \le \mathfrak{K}_U(x, \xi) = \mathfrak{K}_{U}(\psi (0), \psi_*(v)) \le \mathfrak{K}_{B(r)} (0, v),
\]
where $v \in T'_0 (B(r))$ such that $\phi_*(v) = \xi$. It follows from \eqref{eq:FMBr} that
\[
   \mathfrak{K}_M (x, \xi) \le \mathfrak{K}_{B(r)} (0, v) = \frac{| v |_{\mathbb{C}^n}}{r}.
\]
By virtue of the quasi-bounded geometry of $(M, \omega)$, more precisely, \eqref{eq:simEu}, we have
\[
   C^{-1} | v |^2_{\mathbb{C}^n} \le (\psi^*\omega) (0; v) = \omega(x; \xi) \equiv | \xi |^2_{\omega} \le C |v|^2_{\mathbb{C}^n},
\]
where $C>0$ is a constant depending only on $r$. Hence,
\[
   \mathfrak{K}_M (x, \xi) \le \frac{\sqrt{C}}{r} | \xi |_{\omega}.
\]
This completes the proof.
\end{proof}

\noindent \emph{Proof of Theorem~\ref{th:WY-KR}.} Since $-B \le H(\omega) \le - A$, we can assume $(M, \omega)$ has quasi-bounded geometry, by Lemma~\ref{le:Shi} and Theorem~\ref{th:WYqc}. Then, the radius of quasi-bounded geometry depends only on $A$, $B$, and $\dim M$. The desired result then follows from Lemma~\ref{le:WY-KR} and Lemma~\ref{le:KR-left}. \qed

\section{Bergman metric and sectional curvature}

Let $M$ be an $n$-dimensional complex manifold. We follow \cite[Section 8]{Greene-Wu:1979} for some notations. Let $\Lambda^{(n, 0)}(M) \equiv A^{n,0}(M)$ be the space of smooth complex differential $(n, 0)$ forms on $M$.  For $\varphi, \psi \in \Lambda^{(n, 0)}$, define
\begin{equation} \label{eq:L2(n,0)}
   \langle \varphi, \psi \rangle = (-1)^{n^2/2} \int_M \varphi \wedge \overline{\psi}
\end{equation}
and 
\[
   \| \varphi \| = \sqrt{ \langle \varphi, \varphi \rangle }.
\]
Let $L^2_{(n,0)}$ be the completion of
\[
    \{ \varphi \in \Lambda^{(n,0)}; \| \varphi \| < +\infty\}
\]
with respect to $\| \cdot \|$. Then $L^2_{(n,0)}$ is a separable Hilbert space with the inner product $\langle \cdot, \cdot \rangle$. Define
\[
  \mathcal{H} = \{ \varphi \in L^2_{(n,0)} \mid \textup{$\varphi$ is holomorphic} \}.
\]
Suppose $\mathcal{H} \ne \{0\}$. Let $\{e_j\}_{j \ge 0}$ be an orthonormal basis of $\mathcal{H}$ with respect to the inner product $\langle \cdot, \cdot \rangle$. Then, the $2n$ form defined on $M \times M$ given by
 \[
    \mathfrak{B}(x, y) = \sum_{j \ge 0} e_j (x) \wedge \overline{e}_j(y), \qquad x, y \in M,
 \]
 is the Bergman kernel of $M$. The convergence of this series is uniform on every compact subset of $M \times M$ (see also Lemma~\ref{le:insupB} below). The definition of $\mathfrak{B}(x, y)$ is independent of the choice of the orthonormal basis of $\mathcal{H}$. Let
\[
    \mathfrak{B}(x) = \mathfrak{B}(x, x) =  \sum_{j \ge 0} e_j (x) \wedge \overline{e}_j(x) \qquad \textup{for all $x \in M$}.
 \]
 Then $\mathfrak{B}(x)$ is a smooth $(n,n)$-form on $M$, which is called the \emph{Bergman kernel form} of $M$. Suppose for some point $P \in M$, $\mathfrak{B}(P) \ne 0$. Define
 \[
    dd^c \log \mathfrak{B} = dd^c \log b
 \]
 where we write $\mathfrak{B}(z) =  b \, d z^1 \wedge \cdots \wedge d z^n \wedge d\bar{z}^1 \wedge \cdots \wedge d \bar{z}^n$ in terms of local coordinates $(z^1,\ldots, z^n)$. It is readily to check that this definition is well-defined. If $dd^c \log \mathfrak{B}$ is everywhere positive on $M$, then we call $dd^c \log \mathfrak{B} \equiv \omega_{\mathfrak{B}}$ the \emph{Bergman metric} on $M$.

We would like to prove Theorem~\ref{th:WY-B}. We shall use the notion of bounded geometry, together with the following results, specifically Corollary~\ref{co:WintdK}. In fact, we only need the case $\Omega$ being a \emph{bounded} domain in $\mathbb{C}^n$. Lemma~\ref{le:insupB} and Lemma~\ref{le:dzdwb} may have interests of their own. In the following, when the boundary $\p \Omega$ of $\Omega$ is empty, i.e., $\Omega = \mathbb{C}^n$, we set $\textup{dist}(E, \p \Omega) = +\infty$.
\begin{lemma} \label{le:insupB}
Let $\Omega$ be a domain in $\mathbb{C}^n$. Let $\{f_j\}_{j \ge 0}$ be a sequence of holomorphic functions on $\Omega$ satisfying the following property: 
 There is a integer $N_0 \ge 0$ such that, for all $N \ge N_0$, 
\begin{equation} \label{eq:fj<1}
   \int_{\Omega} \Big| \sum_{j=0}^N c_j f_j (z) \Big|^2 d V \le \sum_{j=0}^N |c_j|^2 \quad \textup{for all $c_j \in \mathbb{C}$, $0 \le j \le N$.}
\end{equation}
Then, the series
\[
	\sum_{j=0}^{\infty} f_j(z) \overline{f_j(w)} \equiv b(z, w)
\]
converges uniformly and absolutely on every compact subset of $\Omega \times \Omega$. Furthermore,
for every compact subset $E$ of $\Omega$,
\begin{equation} \label{eq:c0b}
    \max_{z, w \in E} |b(z, w)| \le \frac{C(n)}{\textup{dist}(E, \p \Omega)^{2n}},
\end{equation}
where $C(n)>0$ is a constant depending only on $n$.
\end{lemma}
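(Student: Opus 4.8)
The plan is to use the sub-mean-value inequality for holomorphic functions together with the $L^2$-estimate \eqref{eq:fj<1}, and then to upgrade pointwise control of the tails to uniform control by a dominated-convergence argument on a slightly enlarged compact set. We may assume $\Omega\ne\mathbb{C}^n$, since otherwise every $f_j$ is holomorphic and lies in $L^2(\mathbb{C}^n)$, hence $f_j\equiv 0$ (let $\rho\to\infty$ in the estimate below), and all assertions are trivial with the stated convention $\textup{dist}(E,\p\Omega)=+\infty$.

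\emph{Step 1: a uniform pointwise bound.} Fix a compact $E\subset\Omega$ and a radius $0<\rho<\textup{dist}(E,\p\Omega)$. For $z_0\in E$ we have $B(z_0,\rho)\subset\Omega$, and any holomorphic $F$ on $\Omega$ satisfies, by the mean-value property and Cauchy--Schwarz, $|F(z_0)|^2\le (c_n\rho^{2n})^{-1}\int_{B(z_0,\rho)}|F|^2\,dV\le (c_n\rho^{2n})^{-1}\int_\Omega|F|^2\,dV$, where $c_n=\pi^n/n!$ is the Euclidean volume of the unit ball of $\mathbb{C}^n$. Applying this to $F=\sum_{j=0}^N c_jf_j$ for $N\ge N_0$ and invoking \eqref{eq:fj<1} gives $\big|\sum_{j=0}^N c_jf_j(z_0)\big|^2\le (c_n\rho^{2n})^{-1}\sum_{j=0}^N|c_j|^2$ for all $c_j\in\mathbb{C}$; taking the supremum over $\sum_j|c_j|^2\le 1$ (equivalently, the extremal choice $c_j\propto\overline{f_j(z_0)}$, i.e.\ $\ell^2$-duality) yields $\sum_{j=0}^N|f_j(z_0)|^2\le (c_n\rho^{2n})^{-1}$ for every $N\ge N_0$, hence for every $N$ by monotonicity of the partial sums. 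Letting $\rho\uparrow\textup{dist}(E,\p\Omega)$ we obtain
\[
   \sum_{j=0}^{\infty}|f_j(z)|^2\le\frac{C(n)}{\textup{dist}(E,\p\Omega)^{2n}},\qquad z\in E,\qquad C(n)=n!/\pi^n .
\]
In particular $\sum_j|f_j(z)|^2$ converges for every $z\in\Omega$, and by Cauchy--Schwarz for the $\ell^2$-pairing, $\sum_j|f_j(z)\,\overline{f_j(w)}|\le\big(\sum_j|f_j(z)|^2\big)^{1/2}\big(\sum_j|f_j(w)|^2\big)^{1/2}$, which gives absolute convergence of $b(z,w)$ on $E\times E$ and the bound \eqref{eq:c0b}.

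\emph{Step 2: uniform convergence.} It remains to show the tails $h_N(z):=\sum_{j>N}|f_j(z)|^2$ tend to $0$ uniformly on $E$; then the Cauchy--Schwarz estimate $\big|\sum_{j>N}f_j(z)\overline{f_j(w)}\big|\le h_N(z)^{1/2}h_N(w)^{1/2}$ yields uniform and absolute convergence of the series defining $b$ on $E\times E$. Put $d=\textup{dist}(E,\p\Omega)$ and let $E'$ be the closed $(d/2)$-neighborhood of $E$; it is compact with $\textup{dist}(E',\p\Omega)\ge d/2$, so by Step 1 applied to $E'$ we have $0\le h_N\le C_{E'}:=C(n)(2/d)^{2n}$ on $E'$ and $h_N\downarrow 0$ pointwise on $E'$. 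For $z_0\in E$ the ball $B(z_0,d/2)$ lies in $E'$, so applying the sub-mean-value inequality to each finite partial sum $\sum_{j=N+1}^M|f_j|^2$ and letting $M\to\infty$ (monotone convergence) gives $h_N(z_0)\le (c_n(d/2)^{2n})^{-1}\int_{B(z_0,d/2)}h_N\,dV\le (c_n(d/2)^{2n})^{-1}\int_{E'}h_N\,dV$. The right-hand side is independent of $z_0$ and tends to $0$ by dominated convergence (on $E'$, $0\le h_N\le C_{E'}$ and $h_N\downarrow 0$), so $\sup_{z_0\in E}h_N(z_0)\to 0$, as required.

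\emph{Main obstacle.} The only step that is not a routine computation is Step 2: since $\sum_j\|f_j\|_{L^2(\Omega)}^2$ may be infinite (it equals $+\infty$ when $\{f_j\}$ is an orthonormal basis, the case relevant to the Bergman kernel), the series $\sum_j|f_j|^2$ cannot be estimated termwise, and one must combine the sub-mean-value inequality with the local finiteness obtained in Step 1 to convert pointwise decay of the tails into uniform decay.
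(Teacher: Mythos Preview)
Your proof is correct and follows essentially the same route as the paper: obtain the pointwise bound $\sum_j |f_j(z)|^2 \le C(n)/\textup{dist}(z,\p\Omega)^{2n}$ via the mean-value inequality and the extremal choice of $c_j$, then upgrade to uniform convergence by applying the sub-mean-value inequality to the tails and using the local $L^1$-finiteness just established. The only cosmetic differences are that your Step~2 uses dominated convergence on the enlarged compact $E'$ in place of the paper's finite-cover argument, and your treatment of the case $\Omega=\mathbb{C}^n$ is more direct; both variants are fine.
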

\begin{proof}
First, suppose that $\p \Omega$ is nonempty.
We assert that, for any $z \in \Omega$,
\begin{equation} \label{eq:C0bN}
     \sum_{j=0}^N | f_j(z)|^2 \le \frac{C(n)}{\textup{dist}(z, \p \Omega)^{2n}} \quad \textup{for all $N \ge N_0$}.
\end{equation}
Here and below, we denote by $C(n) > 0$ a generic constant depend only on $n$. Assume \eqref{eq:C0bN} momentarily. By the Cauchy-Schwarz inequality,
\begin{align*}
   \sum_{j=0}^N \Big| f_j(z) \overline{f_j(w)}\Big| 
    & \le \sqrt{\sum_{j=0}^N | f_j(z)|^2 } \, \sqrt{\sum_{j=0}^N | f_j(w) |^2}  \\
    & \le  \frac{C(n)}{\textup{dist}(z, \p \Omega)^{n} \cdot \textup{dist}(w, \p \Omega)^n} \\
    & \le \frac{C(n)}{\textup{dist}(E, \p\Omega)^{2n}}, 
\end{align*}
for all $z, w$ in the given compact subset $E$ and for all $N \ge N_0$. Then, letting $N \to +\infty$ yields \eqref{eq:c0b}. 

To show the first statement, by the Cauchy-Schwarz inequality it is sufficient to show that the uniform convergence of $\sum_{j=0}^{\infty} |f_j(z)|^2$ on every compact subset $E$ of $\Omega$. (This is not an immediate consequence of \eqref{eq:C0bN}, however) Let us denote by $B(z; r)$ the open ball in $\mathbb{C}^n$ centered at $z$ of radius $r$.
Let $\delta = \textup{dist}(E, \p \Omega)/4>0$. Then, for each $z_0 \in E$, $B(z_0; 2\delta) \subset \Omega$. By \eqref{eq:C0bN},
\[
    \sum_{j=1}^{\infty}  \int_{B(z_0; \delta)}  | f_j(z)|^2 dV \le \frac{C(n)}{\delta^{2n}} \textup{Vol}(B(z_0; \delta)) \le C(n) < +\infty.
\]
It follows that, for each $\varepsilon>0$, there exists a constant $L$, depending only on $\varepsilon$, such that
\[
    \sum_{j = l}^{l + m} \int_{B(z_0; \delta)} | f_j(z)|^2 dV < \varepsilon, \quad \textup{for all $l \ge L$, and $m \ge 1$}.
\]
On the other hand, applying the mean value inequality to subharmonic function $\sum_{j=l}^{l+m} |f_j(z)|^2$ on $B(z_0; \delta)$ yields
\begin{align*}
    \sum_{j=l}^{l+m} | f_j (z)|^2 \le \frac{C(n)}{\delta^{2n}} \int_{B(z_0; \delta)} \sum_{j=l}^{l+m} |f_j(w)|^2 dV_w.
\end{align*}
Hence,
\[
   \sup_{B(z_0; \delta)} \sum_{j=l}^{l+m} | f_j(z)|^2 \le \frac{C(n)}{\delta^{2n}} \varepsilon.
\]
Since $E$ can be covered by finitely many balls such as $B(z_0; \delta)$, we have proven the uniform convergence of $\sum |f_j(z)|^2$ on $E$.

To show \eqref{eq:C0bN}, fix an arbitrary $z \in \Omega$ and $N \ge N_0$. We can assume, without loss of generality, that $|f_0(z)|^2 + |f_1(z)|^2 + \cdots + |f_N(z)|^2 \ne 0$. Denote
\[
    \epsilon = \frac{\textup{dist}(z, \p \Omega)}{4}  > 0.
\]
Applying the mean value inequality to the subharmonic function $|\sum_{j=0}^N c_j f_j(z) |^2$  yields
\begin{align*}
   \bigg|\sum_{j=0}^N c_j f_j(z) \bigg|^2
   &  \le \frac{1}{\textup{Vol}(B(z; \epsilon))} \int_{B(z; \epsilon)} \Big|\sum_{j=0}^N c_j f_j (\zeta) \Big|^2 d \zeta \\
   & \le \frac{C(n)}{\epsilon^{2n}} \int_{\Omega} \Big|\sum_{j=0}^N c_j f_j (\zeta) \Big|^2 d \zeta \\
   & \le \frac{C(n)}{\textup{dist}(z, \p \Omega)^{2n}} \sum_{j=0}^N |c_j|^2, \qquad \textup{by \eqref{eq:fj<1}}.
\end{align*} 
Letting 
\[
   c_j = \frac{\overline{f_j(z)}}{(|f_0(z)|^2 + \cdots + |f_N (z)|^2)^{1/2}}, \quad 0 \le j \le N,
\]
yields \eqref{eq:C0bN}. This proves the result for $\Omega$ with nonempty boundary. 

If $\p \Omega$ is empty, then $\Omega = \mathbb{C}^n$. We can replace $\Omega$ in the previous proof by a large open ball $B(0; R)$ which contains the compact subset $E$. The same process yields
\[
   \max_{z, w \in E} |b(z, w)| \le \frac{C(n)}{\textup{dist}(E, \p B(0; R))^{2n}} \to 0, \quad \textup{as $R \to +\infty$}.
 \]
 This shows \eqref{eq:c0b}, and hence, $b \equiv 0$, for the case $\textup{dist}(E, \p \Omega) = +\infty$.
\end{proof}
\begin{remark}
An example of $b(z, w)$ in Lemma~\ref{le:insupB} is the classical Bergman kernel function, for which the \emph{equality} in \eqref{eq:fj<1} holds for \emph{all} $N \ge 0$ and $c_j$, $0 \le j \le N$. The arguments are well-known and standard (compare, for example, \cite[p. 121--122]{Bochner-Martin:1948}). For our applications on manifold, however, we have to state and derive the estimate under the weaker inequality hypothesis \eqref{eq:fj<1}, and our estimate constant needs to be explicit on $\textup{dist}(E, \p \Omega)$. 
\end{remark}
\begin{lemma} \label{le:dzdwb}
  Let $\Omega$ be a domain in $\mathbb{C}^n$. Let $b(z, w)$ be a continuous function which is holomorphic in $z$ and $\overline{w}$, and satisfies $\overline{b(z, w)} = b(w, z)$, for all $z, w \in \Omega$.
  If $\Omega \ne \mathbb{C}^n$, then, for each compact subset $E \subset \Omega$, 
\begin{equation} \label{eq:dzdwb}
   \big| \p_z^{\alpha} \p_{\overline{w}}^{\beta} b(z, w) \big| \le \frac{C(n) \alpha! \beta!}{\textup{dist}(E, \p \Omega)^{|\alpha| + |\beta|}} \max_{x, y \in E_{\Omega}} |b(x, y)|, \quad \textup{for all $z, w \in E$}.
\end{equation}
Here $C(n)>0$ is a constant depending only on $n$, $\alpha$ and $\beta$ are multi-indices with $\alpha! \equiv \alpha_1!\cdots \alpha_n!$, $|\alpha| \equiv \alpha_1 + \cdots + \alpha_n$, $\p_z^{\alpha} \equiv (\p_{z^1})^{\alpha_1} \cdots (\p_{z^n})^{\alpha_n}$, and 
\[
   E_{\Omega} = \{z \in \Omega ; \textup{dist}(z, E) \le \textup{dist}(E, \p \Omega)/ 2 \}.
\]
If $\Omega = \mathbb{C}^n$ then \eqref{eq:dzdwb} continues to hold, with $E_{\Omega}$ replaced by any closed ball whose interior contains $E$.
\end{lemma}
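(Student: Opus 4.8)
The plan is to reduce everything to the ordinary multivariable Cauchy integral formula. Since $b(z,w)$ is holomorphic in $z$ and in $\bar w$, and continuous, the function $\tilde b$ defined by $\tilde b(z,\tau) = b(z,w)$ with $\tau = \bar w$ is holomorphic in the $2n$ complex variables $(z,\tau)$ on $\Omega \times \overline{\Omega}$, where $\overline{\Omega} = \{\bar w : w \in \Omega\}$ (separate holomorphy plus continuity; one may also invoke Hartogs). Hence $\p_z^{\alpha} \p_{\bar w}^{\beta} b(z,w) = (\p_z^{\alpha} \p_{\tau}^{\beta} \tilde b)(z,\bar w)$ by the chain rule, and it suffices to apply the standard Cauchy estimate to $\tilde b$ on a suitable polydisc. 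I would first treat the case $\Omega \ne \mathbb{C}^n$, where $d \equiv \textup{dist}(E, \p \Omega) > 0$.

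First I would fix $z, w \in E$, set $r = d/(2\sqrt n)$, and consider the closed polydiscs $\Delta_z = \{\zeta : |\zeta_j - z_j| \le r \text{ for all } j\}$ and $\Delta_w = \{\eta : |\eta_k - w_k| \le r \text{ for all } k\}$. The key elementary observation is that any $\zeta \in \Delta_z$ has $|\zeta - z| \le r\sqrt n = d/2$, so $\textup{dist}(\zeta, E) \le d/2$ and $\textup{dist}(\zeta, \p\Omega) \ge d/2 > 0$; therefore $\Delta_z \subset E_{\Omega}$, and likewise $\Delta_w \subset E_{\Omega}$. The iterated Cauchy integral formula applied to $\tilde b$ on the polydisc centered at $(z,\bar w)$ of polyradius $r$ in each of the $2n$ coordinates then gives
\[
   \big(\p_z^{\alpha} \p_{\tau}^{\beta} \tilde b\big)(z,\bar w) = \frac{\alpha!\,\beta!}{(2\pi\sqrt{-1})^{2n}} \oint \frac{\tilde b(\zeta,\sigma)\, d\zeta\, d\sigma}{\prod_{j}(\zeta_j - z_j)^{\alpha_j + 1}\prod_{k}(\sigma_k - \bar w_k)^{\beta_k + 1}},
\]
the integration being over $\{|\zeta_j - z_j| = r\} \times \{|\sigma_k - \bar w_k| = r\}$. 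On this contour $\eta \equiv \bar\sigma$ satisfies $|\eta_k - w_k| = r$, so $\eta \in \Delta_w \subset E_\Omega$, and $|\tilde b(\zeta,\sigma)| = |b(\zeta,\eta)| \le \max_{x,y \in E_\Omega}|b(x,y)|$ since also $\zeta \in \Delta_z \subset E_\Omega$. Estimating the integrand in the usual way yields
\[
   \big| \p_z^{\alpha} \p_{\bar w}^{\beta} b(z,w) \big| \le \frac{\alpha!\,\beta!}{r^{|\alpha|+|\beta|}} \max_{x,y \in E_\Omega}|b(x,y)| = \frac{(2\sqrt n)^{|\alpha|+|\beta|}\,\alpha!\,\beta!}{\textup{dist}(E,\p\Omega)^{|\alpha|+|\beta|}} \max_{x,y \in E_\Omega}|b(x,y)|,
\]
which is \eqref{eq:dzdwb} with $C(n) = (2\sqrt n)^{|\alpha|+|\beta|}$; continuity of $b$ legitimizes the iterated integral and the use of Fubini.

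Finally, for $\Omega = \mathbb{C}^n$ I would fix an arbitrary open ball $B$ with $E \subset B$, note that $b$ is continuous on $\overline{B}$ and holomorphic there in $z$ and $\bar w$, and rerun the argument with $\Omega$ replaced by $B$: with $d = \textup{dist}(E, \p B)$ and $r = d/(2\sqrt n)$ the polydiscs stay inside $\overline{B}$ (which contains $E_B$), giving exactly the asserted bound with $E_\Omega$ replaced by that closed ball. There is no genuine obstacle in this lemma; the only points requiring care are the geometric bookkeeping — choosing $r = d/(2\sqrt n)$ so that a polydisc of polyradius $r$, whose Euclidean radius is $r\sqrt n = d/2$, fits inside the neighborhood $E_\Omega$ — and correctly handling the substitution $\tau = \bar w$ forced by the fact that $b$ is holomorphic in $\bar w$ rather than in $w$ (so that the Cauchy contour in the second group of variables is $|\sigma_k - \bar w_k| = r$, equivalently $|\bar\sigma_k - w_k| = r$).
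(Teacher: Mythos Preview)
Your argument is correct. The route differs from the paper's in one respect worth noting: you pass to $\tilde b(z,\tau)=b(z,\bar\tau)$, invoke Osgood/Hartogs to get joint holomorphy in $(z,\tau)\in\Omega\times\overline{\Omega}$, and then apply a single $2n$-variable Cauchy estimate. The paper instead works in two $n$-variable steps: first Cauchy in $z$ with $w$ fixed, then it uses the hypothesis $\overline{b(z,w)}=b(w,z)$ to convert $\p_{\bar w}^{\beta}b(\zeta,w)$ into $\overline{\p_w^{\beta}b(w,\zeta)}$, which is holomorphic in $w$, and applies Cauchy again. Your approach is a bit cleaner and in fact never uses the conjugate-symmetry hypothesis at all---only continuity plus separate holomorphy in $z$ and $\bar w$---so it proves a slightly more general statement. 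The paper's approach, on the other hand, avoids appealing to joint holomorphy since each Cauchy integral involves only one block of variables at a time. Both produce the same estimate with the same polydisc geometry (your $r=d/(2\sqrt n)$ versus the paper's $\delta=d/(4\sqrt n)$), and in both cases the resulting constant is really $(c\sqrt n)^{|\alpha|+|\beta|}$ rather than a constant depending on $n$ alone; this is a harmless imprecision already present in the lemma's statement and is all that is needed downstream.
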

\begin{proof}
It is sufficient to show \eqref{eq:dzdwb} for the case $\Omega \subsetneq \mathbb{C}^n$; the case $\Omega = \mathbb{C}^n$ follows similarly.
The inequality clearly holds when $\alpha = \beta = 0$, since $E$ is contained in $E_{\Omega}$. Consider the case $\beta = 0$ but $\alpha \ne 0$. Let $\delta = \frac{1}{4\sqrt{n}} \textup{dist}(E, \p \Omega) > 0$. Pick $z, w \in E$. By Cauchy's integral formula,
\begin{align*}
    \p_z^{\alpha} b(z, w)  = & \frac{\alpha_1! \cdots \alpha_n!}{(2\pi \sqrt{-1})^n}  \int_{\{|\zeta^1-z^1= \delta |\}} \cdots \\
   &\int_{\{|\zeta^n - z^n| = \delta\}} \frac{b(\zeta, w)\, d\zeta^1 \cdots d\zeta^n}{(\zeta^1 - z^1)^{\alpha_1+1}\cdots(\zeta^n - z^n)^{\alpha_n+1}} 
\end{align*}
It follows that
\begin{align}
   |\p_z^{\alpha} b(z, w)|
   & \le \frac{C(n) \alpha!  }{\delta^{ | \alpha |} } \sup_{\zeta \in \mathbb{D}^n(z; \delta)} | b (\zeta, w)| \label{eq:intdK1D} \\
   & \le \frac{C(n)  \alpha! }{\delta^{|\alpha|}} \max_{\zeta, y  \in E_{\Omega}} | b(\zeta, y)|.  \notag
\end{align}
Here $\mathbb{D}^n(z; \delta) \equiv \{ \zeta \in \mathbb{C}^n; |\zeta^1 - z^1| < \delta, \ldots, |\zeta^n - z^n| < \delta\}$ satisfies $\mathbb{D}^n(z; \delta) \subset E_{\Omega}$.

Consider the general case $\alpha, \beta \ne 0$. Applying \eqref{eq:intdK1D} with $b(z, w)$ replaced by $\p_{\overline{w}}^{\beta} b(z, w)$ yields
\begin{align*}
   |\p_z^{\alpha} \p_{\overline{w}}^{\beta} b(z, w)| 
   & \le  \frac{C(n) \alpha!}{\delta^{|\alpha|}} \sup_{\zeta \in \mathbb{D}^n (z; \delta)} |\p_{\overline{w}}^{\beta} b(\zeta, w) | && \\
   & = \frac{C(n)\alpha!}{\delta^{|\alpha|}} \sup_{\zeta \in \mathbb{D}^n (z; \delta)} |\p_{w}^{\beta} b(w, \zeta) | && \textup{\Big(since $\overline{b(\zeta, w)} = b(w, \zeta)$\Big)} \\
   & \le \frac{\alpha! \beta! C(n)}{\delta^{|\alpha| + |\beta|}} \max_{x, y \in E_{\Omega}} |b(x, y)|, && \textup{by \eqref{eq:intdK1D}}.
\end{align*}
Here $C(n)>0$ denotes a generic constant depending only on $n$.
\end{proof}
\begin{coro} \label{co:WintdK}
Let $\Omega$ be a domain in $\mathbb{C}^n$, and let $b(z, w)$ be the function given in Lemma~\ref{le:insupB}. For each compact subset $E \subset \Omega$, 
\[
   \big| \p_z^{\alpha} \p_{\overline{w}}^{\beta} b(z, w) \big| \le \frac{\alpha! \beta! C(n)}{\textup{dist}(E, \p \Omega)^{2n + |\alpha| + |\beta|}}, \quad \textup{for all $z, w \in E$}.
\]
Here $C(n)>0$ is a constant depending only on $n$, and $\alpha, \beta \in (\mathbb{Z}_{\ge 0})^n$ are multi-indices, $\p_z^{\alpha} \equiv (\p_{z^1})^{\alpha_1} \cdots (\p_{z^n})^{\alpha_n}$, $\alpha! \equiv \alpha_1!\cdots \alpha_n!$, and $|\alpha| \equiv \alpha_1 + \cdots + \alpha_n$.
\end{coro}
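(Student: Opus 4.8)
\emph{Proof proposal.} The plan is to chain together Lemma~\ref{le:dzdwb} and Lemma~\ref{le:insupB}. First I would dispose of the degenerate case $\Omega = \mathbb{C}^n$: by the last paragraph of the proof of Lemma~\ref{le:insupB} the function $b$ vanishes identically, so every derivative $\p_z^{\alpha}\p_{\overline{w}}^{\beta} b$ is zero, while by our convention $\textup{dist}(E, \p\Omega) = +\infty$ and $2n + |\alpha| + |\beta| \ge 2n \ge 2 > 0$, so the asserted inequality reads $0 \le 0$. Hence assume $\Omega \subsetneq \mathbb{C}^n$ and set $d = \textup{dist}(E, \p\Omega) \in (0, +\infty)$. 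Note also that $b(z,w) = \sum_{j} f_j(z)\overline{f_j(w)}$ is, by Lemma~\ref{le:insupB}, continuous, holomorphic in $z$ and in $\overline{w}$, and satisfies $\overline{b(z,w)} = b(w,z)$, so Lemma~\ref{le:dzdwb} is indeed applicable.

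Next I would record two elementary facts about the enlarged set $E_{\Omega} = \{ z \in \Omega : \textup{dist}(z, E) \le d/2 \}$ appearing in Lemma~\ref{le:dzdwb}. It is compact: for each $e \in E$ the closed ball $\overline{B(e; d/2)}$ lies in $\Omega$ since $\textup{dist}(e, \p\Omega) \ge d$, so $E_{\Omega}$ coincides with the closed $(d/2)$-neighbourhood of the bounded closed set $E$, which is bounded and closed in $\mathbb{C}^n$. Moreover $\textup{dist}(E_{\Omega}, \p\Omega) \ge d/2$: given $x \in E_{\Omega}$, choose $e \in E$ with $|x - e| = \textup{dist}(x, E) \le d/2$ (possible as $E$ is compact); then
\[
   \textup{dist}(x, \p\Omega) \ge \textup{dist}(e, \p\Omega) - |x - e| \ge d - \tfrac{d}{2} = \tfrac{d}{2}.
\]

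Then I would apply Lemma~\ref{le:insupB} to the compact set $E_{\Omega}$ in place of $E$, obtaining
\[
   \max_{x, y \in E_{\Omega}} |b(x, y)| \le \frac{C(n)}{\textup{dist}(E_{\Omega}, \p\Omega)^{2n}} \le \frac{2^{2n} C(n)}{d^{2n}},
\]
and feed this into the conclusion of Lemma~\ref{le:dzdwb},
\[
   \big| \p_z^{\alpha} \p_{\overline{w}}^{\beta} b(z, w) \big| \le \frac{C(n)\, \alpha!\, \beta!}{d^{|\alpha| + |\beta|}} \max_{x, y \in E_{\Omega}} |b(x, y)|, \qquad z, w \in E.
\]
Combining the two displays and absorbing the harmless factor $2^{2n}$ into the generic constant $C(n)$ yields the claimed bound.

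There is essentially no serious obstacle here; the corollary is a packaging of the two preceding lemmas. The only point requiring a moment's care is the verification that $E_{\Omega}$ is compact and that $\textup{dist}(E_{\Omega}, \p\Omega) \ge d/2$, which is exactly what legitimizes invoking Lemma~\ref{le:insupB} on $E_{\Omega}$ with a constant controlled by $d$.
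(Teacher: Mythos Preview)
Your proposal is correct and is exactly the argument the paper intends: the corollary is stated without a separate proof precisely because it is the immediate combination of Lemma~\ref{le:insupB} (applied to the enlarged compact $E_\Omega$) and Lemma~\ref{le:dzdwb} (applied to $E$). Your handling of the degenerate case $\Omega=\mathbb{C}^n$ and your verification that $E_\Omega$ is compact with $\textup{dist}(E_\Omega,\p\Omega)\ge d/2$ fill in the only details needed.
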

\begin{remark}
Corollary~\ref{co:WintdK} in particular implies a pointwise \emph{interior estimate} for the Bergman kernel. This may be compared with the  global estimates of Bergman kernel function in the smooth bounded domain satisfying certain boundary condition such as Bell's Condition R (see, for example, \cite{Kerzman:1972}, \cite{Bell-Boas:1981}, and \cite[p. 144]{Chen-Shaw:2001} and references therein). Those estimates are based on the pseudo-local estimate of the $\bar{\p}$-Neumann operator. The method here is entirely elementary, without assuming any boundary condition.
\end{remark}

\begin{lemma} \label{le:WBeM}
Let $(M^n, \omega)$ be a complete, simply-connected, K\"ahler manifold satisfy
\begin{equation} \label{eq:Kk1k2}
    - \kappa_2 \le K(\omega) \le - \kappa_1 < 0
\end{equation}
for two positive constants $\kappa_2 > \kappa_1 > 0$.
Let $\mathfrak{B}(z, z)$ and $\omega_{\mathfrak{B}}$ be the Bergman kernel form and Bergman metric on $M$. Assume that  $\mathfrak{B} /\omega^n \ge c_0$ on $M$ for some constant $c_0>0$. Then, $\omega_{\mathfrak{B}}$ has bounded geometry, and satisfies
\begin{equation} \label{eq:conGWB}
   \omega_{\mathfrak{B}} \le C(n, c_0, \kappa_1, \kappa_2) \, \omega \quad \textup{on $M$},
\end{equation}
where $C(n, c_0, \kappa_1, \kappa_2)>0$ is a constant depending only on $n$, $c_0$, $\kappa_1$, and $\kappa_2$.
\end{lemma}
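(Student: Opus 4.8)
The plan is to transfer the statement to the Euclidean picture through a holomorphic chart of bounded geometry, and then to control the local Bergman kernel by the elementary interior estimates of Lemma~\ref{le:insupB} and Corollary~\ref{co:WintdK}. First I would reduce to a metric of bounded geometry with a \emph{uniform} radius. Since $M$ is complete, simply-connected and $K(\omega)\le-\kappa_1<0$, it is Cartan--Hadamard, so the injectivity radius of $\omega$ is $+\infty$. By Lemma~\ref{le:Kpinch} there is a K\"ahler metric $\tilde\omega$ with $C^{-1}\omega\le\tilde\omega\le C\omega$, sectional curvature pinched between two negative constants depending only on $n,\kappa_1,\kappa_2$, and all covariant derivatives of its curvature bounded; $\tilde\omega$ is again complete with negative curvature, hence also has infinite injectivity radius, so by Theorem~\ref{th:WYqc}(2) the manifold $(M,\tilde\omega)$ has bounded geometry with a radius $r_1=r_1(n,\kappa_1,\kappa_2)$ and with the constant in \eqref{eq:simEu} depending only on $n,\kappa_1,\kappa_2$. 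Since $\omega_{\mathfrak B}$ and $\omega$ are uniformly equivalent to $\tilde\omega$ up to such constants, it suffices to prove $\omega_{\mathfrak B}\le C\tilde\omega$ and to build bounded-geometry charts for $\omega_{\mathfrak B}$ out of those of $\tilde\omega$.

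Next I would set up the local $L^2$ estimate. Fix $P\in M$ and a bounded-geometry chart $\psi:U\to M$, biholomorphic onto its image, with $\psi(0)=P$ and $B_{\mathbb C^n}(0;r_1)\subset U$. Since $\mathfrak B/\omega^n\ge c_0>0$ we have $\mathfrak B\ne0$, so $\mathcal H\ne\{0\}$; let $\{e_j\}_{j\ge0}$ be an orthonormal basis and write $\psi^*e_j=f_j\,dz^1\wedge\cdots\wedge dz^n$ on $U$. By the sign convention in \eqref{eq:L2(n,0)}, the integrand $(-1)^{n^2/2}\varphi\wedge\overline\varphi$ of the defining inner product is a nonnegative multiple of Lebesgue measure for every $(n,0)$-form $\varphi$; restricting the integral computing $\|\sum c_je_j\|^2$ to $\psi(B_{\mathbb C^n}(0;r_1))\subset M$ and pulling back along the embedding $\psi$ yields, after a harmless rescaling of the $f_j$ by a dimensional constant, precisely hypothesis \eqref{eq:fj<1} with $\Omega=B_{\mathbb C^n}(0;r_1)$ and $N_0=0$. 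Hence Lemma~\ref{le:insupB} and Corollary~\ref{co:WintdK} apply to $b(z,w)=\sum_j f_j(z)\overline{f_j(w)}$, which satisfies $\psi^*\mathfrak B=b(z,z)\,dz^1\wedge\cdots\wedge d\bar z^n$ and $\psi^*\omega_{\mathfrak B}=dd^c\log b(z,z)$: on $E=\overline{B_{\mathbb C^n}(0;r_1/2)}$ every derivative $\partial_z^\alpha\partial_{\bar w}^\beta b$ is bounded by $C(n,\alpha,\beta)\,r_1^{-2n-|\alpha|-|\beta|}$, i.e.\ by a constant depending only on $n,\kappa_1,\kappa_2$ and the orders.

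Now I would conclude both assertions. At $0$ the identities $\psi^*\mathfrak B=b(z,z)\,dz^1\wedge\cdots\wedge d\bar z^n$ and $\psi^*\omega^n=(\text{dimensional const})\,\det(g_{i\bar j})\,dz^1\wedge\cdots\wedge d\bar z^n$, together with $\det(g_{i\bar j})(0)$ bounded above and below (from $\omega\simeq\tilde\omega$ and \eqref{eq:simEu}), turn $\mathfrak B/\omega^n\ge c_0$ into a lower bound $b(0,0)\ge c=c(n,c_0,\kappa_1,\kappa_2)>0$. Writing $\beta_0(z)=b(z,z)$ for the diagonal function, one has $(\omega_{\mathfrak B})_{i\bar k}=\partial_{z^i}\partial_{\bar z^k}\log\beta_0=\beta_0^{-1}\partial_{z^i}\partial_{\bar z^k}\beta_0-\beta_0^{-2}(\partial_{z^i}\beta_0)(\partial_{\bar z^k}\beta_0)$, and the derivatives of $\beta_0$ through order $2$ are finite sums of the quantities bounded in the previous step, so each entry of the positive-definite Hermitian matrix $(\omega_{\mathfrak B})_{i\bar k}(0)$ is bounded in absolute value by a constant $K=K(n,c_0,\kappa_1,\kappa_2)$; hence $\psi^*\omega_{\mathfrak B}|_0\le nK\,\omega_{\mathbb C^n}|_0\le nKC\,\psi^*\tilde\omega|_0$ by \eqref{eq:simEu}, and pushing forward to $P$ and using $\tilde\omega\le C\omega$ gives $\omega_{\mathfrak B}(P)\le C(n,c_0,\kappa_1,\kappa_2)\,\omega(P)$, which is \eqref{eq:conGWB} as $P$ is arbitrary. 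For bounded geometry of $\omega_{\mathfrak B}$, Theorem~\ref{th:GrWuH} (valid since $M$ is simply-connected with negatively pinched sectional curvature) gives $\omega_{\mathfrak B}\ge C\omega$, so $\omega_{\mathfrak B}$ is complete and uniformly equivalent to $\tilde\omega$; in each chart $\psi|_{B_{\mathbb C^n}(0;r_1/2)}$ one has $\psi^*\omega_{\mathfrak B}=dd^c\log\beta_0$ with $\beta_0\ge c$ and all derivatives of $\beta_0$ bounded by Corollary~\ref{co:WintdK}, so the components of $\psi^*\omega_{\mathfrak B}$ and all their derivatives are bounded uniformly in $P$; rescaling these charts to unit size exhibits $(M,\omega_{\mathfrak B})$ as having bounded geometry in the sense of Definition~\ref{de:qbg}.

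The main obstacle is the restriction-and-pullback step used to derive \eqref{eq:fj<1}: it is essential that $\psi$ be a genuine biholomorphic embedding (not merely a local biholomorphism, as in a quasi-coordinate chart), and that its domain contain a Euclidean ball of a radius $r_1$ that is \emph{uniform} over $M$ — otherwise the resulting estimate constants would depend on $P$. This is exactly why one must first pass to the metric $\tilde\omega$ of bounded geometry with uniform radius via Lemma~\ref{le:Kpinch} and Theorem~\ref{th:WYqc}(2), and why the simple-connectedness of $M$, which forces the injectivity radius to be infinite, cannot be dispensed with. Everything after that is a routine application of the interior estimates.
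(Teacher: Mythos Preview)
Your proposal is correct and follows essentially the same route as the paper: reduce via Lemma~\ref{le:Kpinch} and Cartan--Hadamard to a metric of bounded geometry with uniform radius (Theorem~\ref{th:WYqc}(2)), pull back an orthonormal basis of $\mathcal H$ through the injective holomorphic chart to verify hypothesis~\eqref{eq:fj<1}, apply Corollary~\ref{co:WintdK} for derivative bounds on $b$, and combine with the lower bound $b\ge c_0\det(g_{i\bar j})$ coming from $\mathfrak B/\omega^n\ge c_0$ to control $g_{\mathfrak B,i\bar j}=\partial_i\partial_{\bar j}\log b$. The only cosmetic difference is that you keep $\omega$ and $\tilde\omega$ separate and invoke Theorem~\ref{th:GrWuH} explicitly for the lower bound on $\omega_{\mathfrak B}$ (needed for \eqref{eq:simEu} in the bounded-geometry claim), whereas the paper absorbs $\tilde\omega$ into $\omega$ at the outset and leaves that lower bound implicit.
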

\begin{proof}
By \eqref{eq:Kk1k2} and Lemma~\ref{le:Kpinch} we can assume, without loss of generality, that the curvature tensor of $\omega$ and all its covariant derivatives are bounded. On the other hand, it follows from \eqref{eq:Kk1k2} and the standard Cartan-Hadamard theorem that, for a point $P \in M$, the exponential map $\exp_P : T_{\mathbb{R}, P} M \to M$ is a diffeomorphism. This, in particular, implies that the injectivity radius of $M$ is infinity. Thus, the manifold $(M, \omega)$ is of bounded geometry, by the second statement of Theorem~\ref{th:WYqc}.

 Since $\omega$ has bounded geometry, there exists a constant $r>0$, depending only on $n$, $\kappa_1$, $\kappa_2$, such that for each point $p \in E$, there is a biholomorphism $\psi_p$ from the open ball $B(r) \equiv B_{\mathbb{C}^n}(0; r)$ onto its image in $M$ such that $\psi_p(0) = p$ and $\psi_p^*(\omega)$ is uniformly equivalent to Euclidean metric on $B(r)$ up to infinite order. In particular, let $g_{i\bar{j}}$ be the metric component of $\psi_p^*(\omega)$ with respect to holomorphic coordinates $v^1, \ldots, v^n$ centered at $p$; then
  \begin{equation} \label{eq:bgC}
      C^{-1} (\delta_{ij}) \le (g_{i\bar{j}}) \le C (\delta_{ij}) 
  \end{equation}
  on $B(r)$. Here $B(r)$ denotes a ball in $\mathbb{C}^n$ centered at the origin of radius $r>0$, and $C>0$ is a generic constant depending only on $\kappa_1$, $\kappa_2$, and $n$.
  
Let $\{\phi_j\}_{j \ge 0}$ be an orthonormal basis of the Hilbert space $\mathcal{H}$ with respect to the inner product $\langle \cdot, \cdot \rangle$ given in \eqref{eq:L2(n,0)}. Then, by definition
 \[
    \mathfrak{B}(P, Q) = \sum_{j \ge 0} \phi_j (P) \wedge \overline{\phi_j(Q)}, \qquad \textup{for all $P, Q \in M$}.
 \]
Write $\phi_j = f_j(v) dv^1 \wedge \cdots \wedge dv^n$ in the chart $(B(r), \psi_p, v^j)$, for which we mean, as a standard convention, $\psi_p^*\phi_j (v) = f_j(v) dv^1 \wedge \cdots \wedge dv^n$ for $v \in B(r)$. 
Then, each $f_j$ is holomorphic on $B(r)$, $j \ge 0$.

 We \textbf{claim} that the domain $B(r)$ and sequence $\{f_j\}$ satisfy the requirement, \eqref{eq:fj<1}, in Lemma~\ref{le:insupB}. Indeed, for each $\phi \in \mathcal{H}$ with $\psi_p^* \phi = h \, dv^1\wedge \cdots \wedge dv^n$ on $B(r)$, we have
\begin{align*}
   \int_{B(r)} | h(v) |^2 dV 
   & = (-1)^{n^2/2} \int_{B(r)} | h (v)|^2 dv^1 \wedge \cdots \wedge dv^n \wedge d\bar{v}^1 \wedge \cdots \wedge d\bar{v}^n \\
   & =  (-1)^{n^2/2} \int_{\psi_p(B(r))} \phi \wedge \overline{\phi} \\
   & \le (-1)^{n^2/2} \int_M \phi \wedge \overline{\phi} \equiv \langle \phi, \phi \rangle.
\end{align*}
Now for any $N \ge 0$ and any $c_j \in \mathbb{C}$, $0 \le j \le N$, substituting
\[
   \phi = \sum_{j=0}^N c_j \phi_j \quad \textup{with} \quad \textup{$h (v) = \sum_{j=0}^N c_j f_j$ in $B(r)$}
\]
yields
\[
   \int_{B(r)} \Big| \sum_{j=0}^N c_j f_j(v) \Big|^2 dV \le \langle \phi, \phi \rangle = \sum_{j=0}^N |c_j|^2.
\]
This verifies \eqref{eq:fj<1}; hence, the claim is proved. Therefore, we have
\[
    \psi_p^* \mathfrak{B} (v, w)  = b (v, w) dv^1\wedge \cdots \wedge dv^n \wedge d \overline{w}^1 \wedge \cdots \wedge d \overline{w}^n, \quad v, w \in B(r),
\]
in which
\[
   b(v, w) = \sum_{j \ge 0} f_j (v) \overline{f_j(w)}
\]
is a continuous function in $B(r)$, holomorphic in $v$ and $\overline{w}$, and satisfies the interior estimate in Corollary~\ref{co:WintdK}. 
Applying Corollary~\ref{co:WintdK}, with $\Omega = B(r)$ and $E$ being the closure $\overline{B(r/2)}$ of $B(r/2)$, yields
  \[
     |\p_v^{\alpha} \p_{\overline{v}}^{\beta} b (v, v) | \le \frac{\alpha! \beta! C(n)}{r^{2n+|\alpha| + |\beta|}}, \quad \textup{for all $v \in \overline{B(r/2)}$}.
  \]
  On the other hand, by the hypothesis $\mathfrak{B}(P, P) /\omega^n (P) \ge c_0$ for all $P \in M$; hence, 
  \[
     b (v, v) \ge c_0 \det (g_{i\bar{j}}) \ge c_0 C^{-n} > 0,
  \]
  where \eqref{eq:bgC} is used.
  Write $\omega_{\mathfrak{B}} = (\sqrt{-1}/2)g_{\mathfrak{B}, i\bar{j}} dv^i \wedge d \bar{v}^j$. Then,
  \[
     g_{\mathfrak{B}, i\bar{j}} = b^{-1} \p_{v^i} \p_{\overline{v}^j} b - b^{-2} \p_{v^i} b \, \p_{\overline{v}^j} b
  \]
  satisfies that
  \begin{equation} \label{eq:BKVp}
      (g_{\mathfrak{B}, {i\bar{j}}}) \le \frac{C(c_0, n)}{r^{2n+2}} (\delta_{ij}) \le \frac{C(n, c_0, \kappa_1, \kappa_2)}{r^{2n+2}} (g_{i\bar{j}}),
  \end{equation}
  by \eqref{eq:bgC} again, and that
  \[
     \Big| \p_v^{\mu} \p_{\overline{v}}^{\nu} g_{\mathfrak{B}, i\bar{j}} \Big| \le \frac{C(n, c_0)}{r^{2n+ 2+ |\mu| + |\nu|}}.
  \]
  This proves that $\omega_{\mathfrak{B}}$ has bounded geometry. The desired inequality \eqref{eq:conGWB} (or equivalently, $\textup{tr}_{\omega} \omega_{\mathfrak{B}} \le C$) follows from \eqref{eq:BKVp}.
\end{proof}
Note that the hypothesis $\mathfrak{B} \ge c_0 \omega^n$ in Lemma~\ref{le:WBeM} is guaranteed  by the left inequality in Theorem~\ref{th:GrWuH} \eqref{eq:qsBkf}, which is, in fact, implicitly contained in \cite[p. 248, line -4]{Siu-Yau:1977}. Thus,
Theorem~\ref{th:WY-B} 
follows from the left inequality of \eqref{eq:qsBkf} and Lemma~\ref{le:WBeM}.
\begin{remark}
A consequence of Theorem~\ref{th:WY-B} is the following technical fact on the $L^2$-estimate, originally proposed (conjectured) by \cite[p. 145]{Greene-Wu:1979} to show Conjecture~\ref{con:GW-B}.  
Fix arbitrary $x \in M$ and $\eta \in T'_x M$.  
For any $\varphi \in \mathcal{H}$ with $\varphi(x) = 0$, define
\[
   \eta (\varphi ) = \eta (f),
\]
where $\varphi$ is locally represented by $f(z) dz^1\wedge \cdots \wedge dz^n$ near $x$. It is well-defined. Denote
\begin{align*}
   \mathcal{E}_{\eta} (x)
   & = \{\varphi \in \mathcal{H}; \varphi(x) = 0, \eta (\varphi )  = 1\}.
\end{align*}
\begin{coro}\label{co:GW-L2}
Let $(M, \omega)$ be a simply-connected complete K\"ahler manifold whose sectional curvature is bounded between two negative constants $-B$ and $-A$. Then, there exists a constant $C>0$ depending only on $\dim M$, $A$, and $B$, such that 
\[
    \min_{\varphi \in \mathcal{E}_{\eta}(x)} \| \varphi \| \ge C, \quad \textup{for any $x \in M$, $\eta \in T'_x M$}.
\]
\end{coro}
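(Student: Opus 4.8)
The plan is to reduce the assertion to two upper bounds already established --- $\mathfrak{B}\le A_2\,\omega^n$, the right half of \eqref{eq:qsBkf} in Theorem~\ref{th:GrWuH}, and $\omega_{\mathfrak{B}}\le C_1\,\omega$, Theorem~\ref{th:WY-B} --- by way of the classical reproducing-kernel description of the Bergman metric, transported into a bounded-geometry coordinate chart. Fix $x\in M$ and $\eta\in T'_xM$; since $\min_{\varphi\in\mathcal{E}_{\eta}(x)}\|\varphi\|$ is homogeneous of degree $-1$ in $\eta$, it suffices to treat the case $|\eta|_{\omega}=1$. As in the proof of Lemma~\ref{le:WBeM}, choose the bounded-geometry chart $(\psi_x,B(r),v^1,\dots,v^n)$ centered at $x$, in which $C^{-1}(\delta_{ij})\le(g_{i\bar j})\le C(\delta_{ij})$ with $C=C(n,A,B)$, fix an orthonormal basis $\{\phi_j\}_{j\ge0}$ of $\mathcal{H}$, and write $\phi_j=f_j\,dv^1\wedge\cdots\wedge dv^n$ on $B(r)$, so that $b(v,w)=\sum_jf_j(v)\overline{f_j(w)}$ represents $\mathfrak{B}$ there and, by Lemma~\ref{le:insupB} and Corollary~\ref{co:WintdK}, the quantities $b(0,0)$, $\{\eta(f_j)\}$, and $\sum_j|\eta(f_j)|^2$ are all finite.

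First I would set up the Hilbert-space extremal problem. The elements $K:=\sum_j\overline{f_j(0)}\,\phi_j$ and $L:=\sum_j\overline{\eta(f_j)}\,\phi_j$ of $L^2_{(n,0)}$ in fact lie in $\mathcal{H}$ (their squared norms are $b(0,0)$ and $\sum_j|\eta(f_j)|^2$, finite by the preceding), and for every $\varphi=\sum_jc_j\phi_j\in\mathcal{H}$, represented by $h=\sum_jc_jf_j$ in the chart, one has $h(0)=\langle\varphi,K\rangle$, $\eta(h)=\langle\varphi,L\rangle$, and $\langle L,K\rangle=\sum_j\overline{\eta(f_j)}\,f_j(0)$. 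Thus $\mathcal{E}_{\eta}(x)=\{\varphi\in\mathcal{H}:\langle\varphi,K\rangle=0,\ \langle\varphi,L\rangle=1\}$ is an affine subspace of $\mathcal{H}$, nonempty because $\omega_{\mathfrak{B}}$ is nondegenerate (Theorem~\ref{th:GrWuH}); with $L':=L-\langle L,K\rangle\,K/\|K\|^2$ the component of $L$ orthogonal to $K$, the norm-minimizer over $\mathcal{E}_{\eta}(x)$ is $L'/\|L'\|^2$, whence
\begin{equation*}
  \Big(\min_{\varphi\in\mathcal{E}_{\eta}(x)}\|\varphi\|\Big)^{2}
  =\frac{1}{\|L'\|^{2}}
  =\frac{1}{\|L\|^{2}-|\langle L,K\rangle|^{2}/\|K\|^{2}}.
\end{equation*}
A direct computation of $g_{\mathfrak{B},i\bar j}(0)=\partial_{v^i}\partial_{\bar v^j}\log b(v,v)\big|_{v=0}$ in terms of $f_j(0)$ and $\partial_{v^i}f_j(0)$ shows that the denominator equals $b(0,0)\,|\eta|^{2}_{\omega_{\mathfrak{B}}}(x)$, so that
\[
  \min_{\varphi\in\mathcal{E}_{\eta}(x)}\|\varphi\|^{2}=\frac{1}{b(0,0)\,|\eta|^{2}_{\omega_{\mathfrak{B}}}(x)}.
\]

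It then suffices to bound $b(0,0)$ and $|\eta|^{2}_{\omega_{\mathfrak{B}}}(x)$ from above by constants depending only on $n,A,B$. For the first, $\mathfrak{B}\le A_2\,\omega^n$ (Theorem~\ref{th:GrWuH}, whose hypotheses hold under $-B\le K(\omega)\le-A$) gives $b(0,0)\le C(n)\,A_2\det(g_{i\bar j})(0)\le C(n)\,A_2\,C^{n}$. For the second, Theorem~\ref{th:WY-B} --- equivalently Lemma~\ref{le:WBeM}, whose hypothesis $\mathfrak{B}\ge c_0\,\omega^n$ is the left half of \eqref{eq:qsBkf} --- gives $\omega_{\mathfrak{B}}\le C_1\,\omega$, so $|\eta|^{2}_{\omega_{\mathfrak{B}}}(x)\le C_1|\eta|^{2}_{\omega}(x)=C_1$. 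Combining, $\min_{\varphi\in\mathcal{E}_{\eta}(x)}\|\varphi\|\ge\big(C(n)\,A_2\,C^{n}C_1\big)^{-1/2}$, which depends only on $\dim M$, $A$, and $B$.

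The step I expect to be the main obstacle is the careful bookkeeping linking the intrinsic Hilbert-space structure of $\mathcal{H}$ to the local chart data: verifying that $K$ and $L$ genuinely lie in $\mathcal{H}$ --- this is exactly where the explicit estimates of Lemma~\ref{le:insupB} and Corollary~\ref{co:WintdK} are needed --- that the evaluation functionals $\varphi\mapsto h(0)$ and $\varphi\mapsto\eta(h)$ are bounded and represented by $K$ and $L$, and that the extremal identity transcribes faithfully into derivatives of $b$ so as to match $\omega_{\mathfrak{B}}=dd^c\log b$. One should also record that $\eta(\varphi)$, hence $\mathcal{E}_{\eta}(x)$, depends on the chart only through the unimodular Jacobian determinant of a change of unitary frame, so that $\min_{\varphi\in\mathcal{E}_{\eta}(x)}\|\varphi\|$ is well-defined and the statement unambiguous; everything else is Hilbert-space linear algebra together with the two inequalities above.
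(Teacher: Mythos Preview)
Your proposal is correct and follows essentially the same approach as the paper. The paper's one-line proof invokes Lemmas~8.17\,(A) and~8.19 of \cite{Greene-Wu:1979} together with Theorem~\ref{th:WY-B}; those Greene--Wu lemmas encode precisely the reproducing-kernel extremal identity
\[
   \min_{\varphi\in\mathcal{E}_{\eta}(x)}\|\varphi\|^{2}=\frac{1}{b(0,0)\,|\eta|^{2}_{\omega_{\mathfrak{B}}}(x)}
\]
that you derive by hand from the elements $K$ and $L$, so the only difference is that you have unpacked the cited lemmas rather than quoting them (and made explicit the auxiliary use of the right inequality in~\eqref{eq:qsBkf} to bound $b(0,0)$).
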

\noindent
Corollary~\ref{co:GW-L2} follows immediately from Lemma 8.17 (A) and Lemma 8.19 in \cite{Greene-Wu:1979} and Theorem~\ref{th:WY-B}.
\end{remark}
\begin{remark}
Theorem~\ref{th:WY-B} can also be compared with a different direction, proposed by the second author, concerning the asymptotic behavior of the Bergman metric on the higher multiple $mK_M$ of the canonical bundle for large $m$. The difference lies not only in the fact that $M$ is noncompact here, but also the situation that one has to consider all terms for the case $m = 1$, rather than the leading order terms for the case $m \to +\infty$. 
\end{remark}

\section{K\"ahler-Einstein metric and holomorphic curvature}
The goal of  this section is to prove Theorem~\ref{th:WYc1}. We shall use the continuity method (Lemma~\ref{le:KEqc}). Theorem~\ref{th:WYc1} follows immediately from Lemma~\ref{le:Shi} and Lemma~\ref{le:KEqc}.

The proof of Lemma~\ref{le:KEqc} differs from that of Cheng-Yau~\cite{Cheng-Yau:1980} and others mainly in the complex Monge-Amp\`ere type equation. The equation used here is inspired by the authors' work~\cite{Wu-Yau:2015}. This new equation is well adapted to the negative holomorphic sectional curvature and the Schwarz type lemma.

As in Cheng-Yau~\cite{Cheng-Yau:1980}, we define the H\"older space $\mathcal{C}^{k,\alpha}(M)$ based on the quasi-coordinates. Let $(M, \omega)$ be a complete K\"ahler manifold of quasi-bounded geometry, and let $\{V_j, \psi_j\}_{j = 1}^{\infty}$ be a family of quasi-coordinate chats in $M$ such that 
\[
   M = \bigcup_{j \ge 1} \psi_j (V_j).
\]
Let $k \in \mathbb{Z}_{\ge 0}$ and $0 < \alpha <1$. For a smooth function $f$ on $M$, define
\[
   | f |_{\mathcal{C}^{k,\alpha}(M)} = \sup_{j \ge 1} \Big( | \psi_j^* f |_{C^{k,\alpha}(V_j)} \Big),
\]
where $|\cdot |_{C^{k,\alpha}(V_j)}$ is the usual H\"older norm on $V_j \subset \mathbb{C}^n$. 
Then, we define $\mathcal{C}^{k,\alpha}(M)$ to be the completion of $\{f \in C^{\infty}(M) ; | f |_{\mathcal{C}^{k,\alpha}(M)} < +\infty\}$ with respect to $|\cdot |_{\mathcal{C}^{k,\alpha}(M)}$.
\begin{lemma}\label{le:ChengYau}
Let $(M, \omega)$ be an $n$-dimensional complete K\"ahler manifold of quasi-bounded geometry, and let $\mathcal{C}^{k,\alpha}(M)$ be an associated H\"older space. For any function $f \in \mathcal{C}^{k,\alpha}(M)$, there exists a unique solution $u \in \mathcal{C}^{k+2,\alpha}(M)$ satisfying
\[
   \left\{ \begin{aligned}
     & (\omega + dd^c u)^n  = e^{u + f} \omega^n \\
     & C^{-1} \omega \le \omega + dd^c u \le C \omega
   \end{aligned} 
   \right.
\]
on $M$. Here the constant $C>1$ depends only on $\inf_M f$, $\sup_M f$, $\inf_M (\Delta_{\omega} f)$, $n$, and $\omega$, in which $\Delta_{\omega} f$ denotes the Laplacian of $f$ with respect to $\omega$.
\end{lemma}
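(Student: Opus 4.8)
The plan is the continuity method in the quasi-coordinate H\"older spaces. For $t\in[0,1]$ consider
\[
(\omega + dd^c u_t)^n = e^{u_t + tf}\,\omega^n ,
\]
and set $S=\{\,t\in[0,1]: \text{this has a solution }u_t\in\mathcal{C}^{k+2,\alpha}(M)\,\}$. Since $u_0\equiv 0$ solves the $t=0$ equation, $S\neq\varnothing$, and it suffices to prove that $S$ is open and closed; the two-sided bound $C^{-1}\omega\le\omega+dd^c u_t\le C\omega$ will come out of the a priori estimates with $C$ uniform in $t$. Note that every $u_t\in\mathcal{C}^{k+2,\alpha}(M)$ is bounded on $M$ together with its first two covariant derivatives, so the maximum-principle arguments below apply.

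\emph{Openness.} At $u_{t_0}\in S$, with $\omega_{t_0}=\omega+dd^c u_{t_0}$, the map $u\mapsto \log\big((\omega+dd^c u)^n/\omega^n\big)-u$ has linearization $v\mapsto \Delta_{\omega_{t_0}}v-v$ (up to the fixed normalization of the Laplacian), a bounded operator $\mathcal{C}^{k+2,\alpha}(M)\to\mathcal{C}^{k,\alpha}(M)$. Because $\omega_{t_0}$ is complete and uniformly equivalent to $\omega$, the Laplacian $\Delta_{\omega_{t_0}}$ is essentially self-adjoint with nonpositive spectrum, so $(\Delta_{\omega_{t_0}}-1)^{-1}$ is bounded on $L^2$; interior Schauder estimates in the quasi-coordinate charts, with constants uniform thanks to the quasi-bounded geometry of $\omega_{t_0}$ (which follows from that of $\omega$ together with the $C^2$-bound), upgrade this to a bounded inverse $\mathcal{C}^{k,\alpha}(M)\to\mathcal{C}^{k+2,\alpha}(M)$. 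The implicit function theorem then gives solvability for $t$ near $t_0$.

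\emph{Closedness.} This rests on a priori estimates uniform in $t$. First, Yau's generalized maximum principle (valid since $\omega$ is complete with Ricci curvature bounded below, by bounded geometry) together with the arithmetic--geometric mean inequality give the $C^0$-bound $\sup_M|u_t|\le\sup_M|f|$. Next, the Aubin--Yau second-order computation yields an inequality of the form
\[
\Delta_{\omega_t}\big(\log\tr_\omega\omega_t - A\,u_t\big) \ge \varepsilon\,\tr_{\omega_t}\omega - C',
\]
where $A$ depends on a lower bound for the bisectional curvature of $\omega$, and $C'$ on $n$, $\sup_M|f|$, $\inf_M(\Delta_\omega f)$ (entering through $\Delta_\omega(u_t+tf)$), and the curvature of $\omega$; applying Yau's maximum principle again and feeding the result back through $\omega_t^n=e^{u_t+tf}\omega^n$ gives the two-sided bound $C^{-1}\omega\le\omega_t\le C\omega$ with $C$ depending only on $n$, $\inf_M f$, $\sup_M f$, $\inf_M(\Delta_\omega f)$, and $\omega$. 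With uniform ellipticity in hand, the complex Evans--Krylov estimate applied chartwise gives a uniform $\mathcal{C}^{2,\alpha}(M)$ bound, and differentiating the equation and bootstrapping with interior Schauder estimates (again uniform by quasi-bounded geometry) yields a uniform $\mathcal{C}^{k+2,\alpha}(M)$ bound. Hence any sequence in $S$ converging to $t_\infty$ has $u_t$ subconverging in $\mathcal{C}^{k+2,\alpha}_{\textup{loc}}$ to a solution at $t_\infty$, so $S$ is closed and $1\in S$. For uniqueness, if $u,u'$ both solve the $t=1$ equation then, by concavity of $\log\det$, the difference $w=u-u'$ satisfies $w\le c\,\Delta_{\omega+dd^c u'}w$ (and symmetrically), and Yau's maximum principle for the complete metric $\omega+dd^c u'$ forces $w\equiv 0$.

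\emph{Main obstacle.} The delicate points are the second-order estimate --- in particular pinning down its dependence on $\inf_M(\Delta_\omega f)$ and on the curvature bounds of $\omega$, which is exactly what forces those quantities into the statement --- and ensuring that all the elliptic estimates (Schauder, Evans--Krylov, the resolvent of $\Delta_{\omega_t}$) have constants independent of the base point and of $t$; this is where the quasi-bounded geometry of $\omega$, and its persistence for $\omega_t$ under the $C^2$-bound, is essential. The remainder is the standard Cheng--Yau continuity-method package adapted to quasi-coordinates.
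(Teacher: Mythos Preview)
Your proposal is correct and is essentially the same continuity-method argument that the paper invokes by citing Cheng--Yau (Theorem~4.4 of \cite{Cheng-Yau:1980}), with bounded geometry replaced by quasi-bounded geometry in the openness and bootstrap steps. The only minor difference is that you obtain the $\mathcal{C}^{2,\alpha}$ estimate via Evans--Krylov in quasi-coordinate charts, whereas the paper (and Cheng--Yau) uses Yau's third-order quantity $\Xi$; either route works and the remaining structure is identical.
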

The proof of Lemma~\ref{le:ChengYau} follows from \cite[p. 524, Theorem 4.4]{Cheng-Yau:1980}, with their bounded geometry replaced by the quasi-bounded geometry, which is used in the openness argument, and the bootstrap argument from the third order estimate to $\mathcal{C}^{k,\alpha}(M)$ estimate.

\begin{lemma} \label{le:KEqc}
Let $(M, \omega)$ be an $n$-dimensional complete K\"ahler manifold such that
\[
   H(\omega) \le - \kappa_1 < 0
\]
for some constant $\kappa_1 > 0$.
Assume that for each integer $q \ge 0$, the curvature tensor $R_{\textup{m}}$ of $\omega$ satisfying
\begin{equation} \label{eq:Rmbdd}
   \sup_{x \in M} | \nabla^q R_{\textup{m}} | \le B_q
\end{equation}
for some constant $B_q>0$, where $\nabla^q$ denotes the $q$th covariant derivative with respect to $\omega$. 
Then, there exists a smooth function $u$ on $M$ such that $\omega_{\textup{KE}} \equiv dd^c \log \omega^n + dd^c u$ is the unique K\"ahler-Einstein metric with Ricci curvature equal to $-1$, and satisfies
\begin{equation} \label{eq:isoKE}
   C^{-1} \omega \le \omega_{\textup{KE}} \le C \omega,
\end{equation}
where the constant $C > 0$ depends only on $n$ and $\omega$. Furthermore, the curvature tensor $R_{\textup{m}, \textup{KE}}$ of $\omega_{\textup{KE}}$ and its $q$th covariant derivative satisfies
\[
   \sup_{x \in M} |\nabla_{\textup{KE}}^q R_{\textup{m}, \textup{KE}} | \le C_{q},
\]
for some constant $C_q$ depending only on $n$, and $B_0, \ldots, B_q$. 
\end{lemma}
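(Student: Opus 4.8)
The plan is to produce $\omega_{\textup{KE}}$ by the continuity method, solving a one–parameter family of complex Monge--Amp\`ere equations in the K\"ahler class of $t\omega-\textup{Ric}(\omega)$, in the spirit of \cite{Wu-Yau:2015}. Since $H(\omega)\le-\kappa_1<0$ and $\omega$ has bounded curvature and covariant derivatives, part~\eqref{it:WYqc:1} of Theorem~\ref{th:WYqc} shows $(M,\omega)$ has quasi-bounded geometry, so the H\"older spaces $\mathcal{C}^{k,\alpha}(M)$ and Lemma~\ref{le:ChengYau} are at our disposal. Using $dd^c\log\omega^n=-\textup{Ric}(\omega)$, a metric $-\textup{Ric}(\omega)+dd^c u>0$ is K\"ahler--Einstein with Ricci curvature $-1$ precisely when $(-\textup{Ric}(\omega)+dd^c u)^n=e^{u}\omega^n$. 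For $t\ge 0$ consider
\[
   (E_t):\qquad \bigl(t\omega-\textup{Ric}(\omega)+dd^c u_t\bigr)^n=e^{u_t}\omega^n,\qquad \omega_t:=t\omega-\textup{Ric}(\omega)+dd^c u_t>0,
\]
so a solution of $(E_t)$ obeys $\textup{Ric}(\omega_t)=-\omega_t+t\omega$, and $(E_0)$ is the Einstein equation. Fix $T_0$ so large that $T_0\omega-\textup{Ric}(\omega)\ge\omega$; for $t\ge T_0$ the reference $t\omega-\textup{Ric}(\omega)$ is a K\"ahler metric uniformly equivalent to $\omega$ with bounded covariant derivatives (because $\textup{Ric}(\omega)$ and its derivatives are bounded), hence of quasi-bounded geometry, and Lemma~\ref{le:ChengYau} solves $(E_t)$. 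Let $I=\{t\in[0,T_0]:(E_t)\text{ has a smooth solution }u_t\text{ with all }\mathcal{C}^{k,\alpha}(M)\text{ norms finite and }\omega_t\text{ uniformly equivalent to }\omega\}$; then $T_0\in I$, and it suffices to prove $I$ is open and closed in $[0,T_0]$.

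Openness is the implicit function theorem: setting $u_{t'}=u_t+w$ reduces $(E_{t'})$ for $t'$ near $t$ to $(\omega_t+(t'-t)\omega+dd^cw)^n=e^{w}\omega_t^n$, whose linearization at $w=0$ is an operator of the form $\Delta_{\omega_t}-1$ (up to a positive factor), an isomorphism $\mathcal{C}^{k+2,\alpha}(M)\to\mathcal{C}^{k,\alpha}(M)$ because $\omega_t$ has quasi-bounded geometry and the zeroth order term is strictly negative; smallness of the data for $t'$ close to $t$ then yields $\omega_{t'}>0$. Closedness requires a priori estimates uniform in $t\in[0,T_0]$. The uniform upper $C^0$-bound $u_t\le C$ follows from Yau's generalized maximum principle applied to $u_t$ with respect to $\omega$ (legitimate since $\textup{Ric}(\omega)$ is bounded below): at a point nearly maximizing $u_t$ one has $\Delta_\omega u_t\le\varepsilon$, hence $\textup{tr}_\omega\omega_t$ is bounded there, and the arithmetic--geometric mean inequality applied to $\omega_t^n/\omega^n=e^{u_t}$ gives the bound. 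The heart of the proof is the uniform two–sided second order estimate $C^{-1}\omega\le\omega_t\le C\omega$. The upper bound $\omega_t\le C\omega$ is obtained by the Schwarz/Royden–type argument of \cite{Wu-Yau:2015}: estimate $\Delta_{\omega_t}\log\textup{tr}_\omega\omega_t$ using $\textup{Ric}(\omega_t)=-\omega_t+t\omega$, polarize $H(\omega)\le-\kappa_1$ in Royden's manner so that a positive multiple $\kappa_1\,\textup{tr}_\omega\omega_t$ of the trace appears, and apply Yau's generalized maximum principle to $\log\textup{tr}_\omega\omega_t-A u_t$; the essential point is that the resulting constant does not degenerate as $t\downarrow 0$, which is exactly where strict negativity $\kappa_1>0$ is used. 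The lower bound $\omega_t\ge C^{-1}\omega$, equivalently a uniform lower bound for $u_t$, is the most delicate estimate: with $\omega_t\le C\omega$ in hand it amounts to bounding $\textup{tr}_{\omega_t}\omega$, which I would obtain by combining the Chern--Lu inequality for $\textup{id}\colon(M,\omega_t)\to(M,\omega)$ (using the bounded bisectional curvature of $\omega$ and $\textup{Ric}(\omega_t)\ge-\omega_t$) with the Monge--Amp\`ere equation and the bounds already established, again via the generalized maximum principle. Once $C^{-1}\omega\le\omega_t\le C\omega$ and $|u_t|\le C$ hold uniformly, rewriting $(E_t)$ in the $\omega$-quasi-coordinate charts gives a uniformly elliptic complex Monge--Amp\`ere equation on balls in $\mathbb{C}^n$ with $\mathcal{C}^{k,\alpha}$-bounded data; Calabi's third order estimate together with Evans--Krylov, Schauder theory, and bootstrapping (as in Cheng--Yau) yield $|u_t|_{\mathcal{C}^{k,\alpha}(M)}\le C_k$ uniformly, and Arzel\`a--Ascoli on the charts produces a limit solving $(E_{t_\infty})$ with the same bounds. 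Hence $I$ is closed, $0\in I$, and $\omega_{\textup{KE}}:=-\textup{Ric}(\omega)+dd^c u_0$ is a complete K\"ahler--Einstein metric with $C^{-1}\omega\le\omega_{\textup{KE}}\le C\omega$.

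Uniqueness follows from the maximum principle: if $\omega_i=-\textup{Ric}(\omega)+dd^c u_i$ are two such metrics, then $w=u_1-u_2$ satisfies $(\omega_2+dd^c w)^n=e^{w}\omega_2^n$, and Yau's generalized maximum principle applied to $\pm w$ (valid since $\textup{Ric}(\omega_2)=-\omega_2$ is bounded below, $\omega_2\le C\omega$) forces $w\equiv 0$. Finally, since $u_0\in\mathcal{C}^{k,\alpha}(M)$ for every $k$ and $\omega_{\textup{KE}}$ is uniformly equivalent to $\omega$, the $\omega$-quasi-coordinate charts also serve as quasi-coordinate charts for $\omega_{\textup{KE}}$, so $(M,\omega_{\textup{KE}})$ has quasi-bounded geometry; part~\eqref{it:WYqc:1} of Theorem~\ref{th:WYqc} then gives $\sup_M|\nabla_{\textup{KE}}^q R_{\textup{m},\textup{KE}}|\le C_q$ for every $q$, with $C_q$ depending only on $n$ and $B_0,\dots,B_q$. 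I expect the main obstacle to be the uniform lower second order estimate (equivalently, the uniform lower bound for $u_t$): unlike the compact case there is no compactness shortcut, the maximum principle must be replaced by Yau's generalized one, and one has to check carefully that no constant blows up as $t\to 0$ — which is precisely the role of the strict pinching hypothesis $H(\omega)\le-\kappa_1<0$.
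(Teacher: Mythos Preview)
Your overall architecture---continuity on the family $(E_t)$ in the class of $t\omega-\textup{Ric}(\omega)$, Lemma~\ref{le:ChengYau} at large $t$, implicit function theorem for openness, a~priori estimates plus Ascoli--Arzel\`a for closedness, uniqueness via a Schwarz/maximum-principle argument---is exactly the paper's. The confusion is in the second-order estimates: you have the two traces swapped, and this matters.

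The Schwarz/Royden computation of \cite[Proposition~9]{Wu-Yau:2015} estimates $\Delta_{\omega_t}\log S$ for $S=\textup{tr}_{\omega_t}\omega$, \emph{not} $\textup{tr}_\omega\omega_t$. Polarizing $H(\omega)\le-\kappa_1$ in Royden's way, together with $\textup{Ric}(\omega_t)=-\omega_t+t\omega$, yields
\[
   \Delta_{\omega_t}\log S\ \ge\ \Big(\frac{(n+1)\kappa_1}{2n}+\frac{t}{n}\Big)S-1,
\]
and Yau's generalized maximum principle gives $S\le 2n/((n+1)\kappa_1)$ directly and uniformly in $t$, \emph{without} any prior control of $\textup{tr}_\omega\omega_t$ or of $\inf u_t$. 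This is where $H(\omega)\le-\kappa_1<0$ enters, and it comes \emph{first}. From $S\le C$ one reads off $\inf_M u_t\ge -C$ via $e^{-u_t/n}=(\omega^n/\omega_t^n)^{1/n}\le S/n$, and then $\textup{tr}_\omega\omega_t\le n(S/n)^{n-1}e^{u_t}\le C$ is purely algebraic, using only the upper $C^0$-bound on $u_t$. So the step you flagged as ``most delicate'' is in fact the one the paper dispatches first and cleanly; the bound $\omega_t\le C\omega$ is the consequence, not the input.

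By contrast, your proposed first step---bounding $\textup{tr}_\omega\omega_t$ via $\Delta_{\omega_t}\log\textup{tr}_\omega\omega_t-Au_t$---involves the curvature of $\omega$ only through a \emph{lower} bisectional bound, and the resulting bad term of the form $-B\,\textup{tr}_{\omega_t}\omega$ is not absorbed as $t\to 0$; the hypothesis $H(\omega)\le-\kappa_1$ plays no role in that inequality. Likewise, your second step as written (Chern--Lu with only bounded bisectional curvature of $\omega$) does not close by itself. Swap the order of the two trace estimates, apply the Royden refinement to $S=\textup{tr}_{\omega_t}\omega$, and your outline becomes the paper's proof verbatim.
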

\begin{proof}
By hypothesis \eqref{eq:Rmbdd} and Theorem~\ref{th:WYqc}, the complete manifold $(M, \omega)$ has quasi-bounded geometry. Denote by $\mathcal{C}^{k,\alpha}(M)$ the associated H\"older space, $k \ge 0$, $0 < \alpha <1$.

Consider the Monge-Amp\`ere equation
\[ 
  \left\{ \begin{aligned}
   (t \omega + dd^c \log \omega^n + dd^c u)^n & = e^{u} \omega^n, \\
    c_t^{-1} \omega \le t \omega + dd^c \log \omega^n + dd^c u & \le c_t \, \omega,
    \end{aligned} \right. \tag*{$($MA$)_t$} \label{eq:ncpMA}
\]
on $M$ with $t > 0$, where the constant $c_t>1$ may depend on $t$.
First, we \textbf{claim} that for a sufficiently large $t$, \ref{eq:ncpMA} has a smooth solution $u$ such that
\begin{equation} \label{eq:qsw_t}
   C^{-1} \omega \le t \omega + dd^c \log \omega^n + dd^c u \le C \omega \quad \textup{on $M$},
\end{equation}
where $C>0$ is a constant depending only on $n$ and $\omega$. To see this, note that $-dd^c \log \omega^n$ is precisely the Ricci curvature of $\omega$. By \eqref{eq:Rmbdd} the curvature tensor of $\omega$ is bounded; then, for an arbitrary  $t_1 > \sqrt{n} B_0$,  
\[
   t_1 \omega > - dd^c \log \omega^n \quad \textup{on $M$}.
\]
It follows that
\[
   t \omega + dd^c \log \omega^n > t_1 \omega \quad \textup{for all $t \ge 2 t_1 > 0$}.
\]
This implies that $t \omega + dd^c \log \omega^n$ defines complete K\"ahler metric on $M$; moreover, since $\omega$ is of quasi-bounded geometry, so is $t \omega + dd^c \log \omega^n$ for $t \ge 2t_1$. In particular,
\[
   F = \log \frac{\omega^n}{(t \omega + dd^c \log \omega^n)^n} \in \mathcal{C}^{k,\alpha}(M), \quad \textup{for all $k \ge 0$, $0 < \alpha <1$.}
\]
It then follows from Lemma~\ref{le:ChengYau} that for $t \ge 2t_1$, equation
\[
   (t \omega + dd^c \log \omega^n + dd^c u)^n = e^{u+F} (t \omega + dd^c \log \omega^n)^n
\]
admits a solution $u \in \mathcal{C}^{k+2,\alpha}(M)$ for all $k\ge 0$ and $0 < \alpha <1$ and satisfies \eqref{eq:qsw_t}. 
This proves the claim.

Let
\[
  T = \{ t \in [0, 2 t_1] ; \; \textup{system \ref{eq:ncpMA} admits a solution $u \in \mathcal{C}^{k+2,\alpha}(M)$}\}.
\]
Then $T$ is nonempty, since $2 t_1 \in T$. 
We would like to show $T$ is open in $[0, 2t_1]$.
Let $t_0 \in T$ with $u_{t_0} \in \mathcal{C}^{k+2,\alpha}(M)$ satisfying $($MA$)_{t_0}$. The linearization of the operator
  \[
     \mathcal{M} (t, v) = \log \frac{(t \omega + dd^c \log \omega^n + dd^c v)^n}{\omega^n} - v
  \]
with respect to $v$ at $t = t_0$, $v = u_{t_0}$ is given by
\[
     \mathcal{M}_{u_{t_0}} (t_0, u_{t_0}) h = \left. \frac{d}{ds} \mathcal{M}(t_0, u_{t_0} + s h) \right|_{s = 0} = (\Delta_{t_0} - 1) h.
\]
Here $\Delta_{t_0}$ denotes the Laplacian with respect to metric $\omega_{t_0} \equiv t_0 \omega + dd^c \log \omega^n + dd^c u_{t_0}$. 
Note that $c_{t_0}^{-1} \omega \le \omega_{t_0} \le c_{t_0} \omega$. In particular, $\omega_{t_0}$ is complete. Furthermore, $\omega_{t_0}$ has quasi-bounded geometry up to order $(k,\alpha)$, that is, $\omega_{t_0}$ has quasi-coordinates satisfying \eqref{eq:simEu}, and \eqref{eq:bddgk} with the norm $|\cdot|_{C^l(U)}$ replaced by $|\cdot |_{C^{k,\alpha}(U)}$. Then, $\Delta_{t_0} - 1 : \mathcal{C}^{k+2,\alpha}(M) \to \mathcal{C}^{k,\alpha}(M)$ is a linear isomorphism, which follows from the same process as that in \cite[pp. 520--521]{Cheng-Yau:1980}, with their bounded geometry replaced by the quasi-bounded geometry. Thus, $T$ is open, by the standard implicit function theorem.

To show $T$ is closed, we shall derive the a priori estimates. Applying the arithmetic-geometry mean inequality to the equation in \ref{eq:ncpMA} yields
\[
   n e^{u/n} \le  n t - s_{\omega} + \Delta_{\omega} u \le C + \Delta_{\omega} u,
\]
where $s_{\omega} \equiv - \textup{tr}_{\omega} dd^c \log \omega^n$ is precisely the scalar curvature of $\omega$. Henceforth, we denote by $C$ and $C_j$ generic positive constants depending only on $n$ and $\omega$. Applying the second author's generalized maximum principle (see, for example, \cite[Proposition 1.6]{Cheng-Yau:1980}) yields
\begin{equation} \label{eq:C0}
   \sup_M u \le C.
\end{equation}
Next, observe that \ref{eq:ncpMA} implies
\begin{equation} \label{eq:Ricmt}
   \textup{Ric} (\omega_t) = - dd^c \log \omega^n_t = - \omega_t + t \omega,
\end{equation}
where $\omega_t \equiv t \omega + dd^c \log \omega^n + dd^c u>0$.
Applying \cite[Proposition 9]{Wu-Yau:2015} with $\omega' = \omega_t$ yields
\[
   \Delta' \log S \ge \Big[ \frac{(n+1)\kappa_1}{2n} + \frac{t}{n} \Big] S - 1,
\]
where $S = \textup{tr}_{\omega_t} \omega$. Again by the second author's generalized maximum principle, 
\begin{equation} \label{eq:C2}
    \sup_M S \le \frac{2n}{(n+1)\kappa_1}.
\end{equation}
Combining \eqref{eq:C0} and \eqref{eq:C2} yields the estimates of $u$ up to the complex second order (cf. \cite{Wu-Yau:2015, Wu-Yau:2016:quasi}). In fact, by \eqref{eq:C2}, 
\[
   e^{-\frac{u}{n}} = \Big(\frac{\omega^n}{\omega_t^n}\Big)^{\frac{1}{n}} \le \frac{S}{n}  \le \frac{2}{(n+1)\kappa_1}.
\]
This implies
\[
    \inf_M u \ge - n \log \frac{(n+1)\kappa_1}{2}.
\]
Moreover, by \eqref{eq:C0} we have $\sup (\omega_t^n / \omega^n) \le C$. This together with \eqref{eq:C2} implies 
\[
   \textup{tr}_{\omega} \omega_t \le  n \Big(\frac{S}{n}\Big)^{n-1} \Big(\frac{\omega_t^n}{\omega^n}\Big)  \le C.
\]
Hence, $\Delta_{\omega} u \le C$ and
\begin{equation} \label{eq:clqsw_t}
    \frac{(n+1)\kappa_1}{2n} \omega \le \omega_t \le (\textup{tr}_{\omega} \omega_t) \omega \le C \omega.
\end{equation}
One can apply \cite[p. 360, 403--406]{Yau:1978:calabi} to the third order term 
\[
   \Xi \equiv g'_{i\bar{j};k} g'_{\bar{r}s;\bar{t}} g'^{i\bar{r}} g'^{s \bar{j}} g'^{k \bar{t}}
\]
 to get  
\[
   \Delta' (\Xi + C \Delta_{\omega} u ) \ge C_1 (\Xi + C \Delta_{\omega} u ) - C_2,
 \]
 where $g'_{i\bar{j}}$ is the metric component of $\omega_t$ and the subscript $;k$ in $g'_{i\bar{j};k}$ denotes the covariant derivative along $\p / \p z^k$ with respect to $\omega$.
 Thus, $\sup_M \Xi \le C$ by the second author's generalized maximum principle. Now applying the standard bootstrap argument (see \cite[p. 363]{Yau:1978:calabi}) to the equation in \ref{eq:ncpMA} with the quasi-local coordinate charts yields $\| u \|_{\mathcal{C}^{k+2,\alpha}(M)} \le C$. The desired closedness of $T$ then follows immediately from the standard Ascoli-Arzel\`a theorem and \eqref{eq:clqsw_t}.
 
 Hence, we have proven $t = 0 \in T$ with $u \in \mathcal{C}^{k,\alpha}(M)$. Then, formula \eqref{eq:Ricmt} tells us that $dd^c \log \omega^n + dd^c u$ is the K\"ahler-Einstein metric. The uniform equivalence \eqref{eq:isoKE} and boundedness of covariant derivatives of its curvature tensor follow immediately from the above uniform estimates on $u$. The uniqueness of complete K\"ahler-Einstein metric of negative curvature follows immediately from the second author's Schwarz Lemma (\cite[Theorem 3]{Yau:1978:schwarz}; see also \cite[p. 408, Lemma 3.3]{Wu:2008}).  
\end{proof}

\bibliographystyle{alpha}
\bibliography{../../../Bib/DWu}

\end{document}